\numberwithin{equation}{section}
\newcommand{\C}{\mathbb{C}}
\newcommand{\N}{\mathbb{N}}
\newcommand{\R}{\mathbb{R}}
\newcommand{\mm}{{\mbox{\boldmath$m$}}}
\newcommand{\ggamma}{{\mbox{\boldmath$\gamma$}}}
\newcommand{\ppi}{{\mbox{\boldmath$\pi$}}}
\newcommand{\sfd}{{\sf d}}
\newcommand{\rme}{{\mathrm e}}
\newcommand{\Kliminf}{K\kern-3pt-\kern-2pt\mathop{\rm lim\,inf}\limits}  
\newcommand{\Lip}{\mathop{\rm Lip}\nolimits}          
\renewcommand{\d}{{\mathrm d}}
\newcommand{\dt}{{\d t}}
\newcommand{\restr}[1]{\lower3pt\hbox{$|_{#1}$}}
\newcommand{\Leb}[1]{{\mathscr L}^{#1}}      
\newcommand{\eps}{\varepsilon}  
\newcommand{\nchi}{{\raise.3ex\hbox{$\chi$}}}
\newcommand{\Pc}[2]{\overline{#1}\kern-2pt^{\vphantom 0}_{#2}}
\newcommand{\Probabilities}[1]{\mathscr P(#1)}          
\newenvironment{proof}{\removelastskip\par\medskip   
\noindent{\em Proof.}
\rm}{\penalty-20\null\hfill$\square$\par\medbreak}
\newtheorem{theorem}{Theorem}[section]
\newtheorem{lemma}[theorem]{Lemma}
\newtheorem{proposition}[theorem]{Proposition}
\newtheorem{definition}[theorem]{Definition}
\newtheorem{remark}[theorem]{Remark}
\newcommand{\prob}{\Probabilities}
\newcommand{\lims}{\varlimsup}
\newcommand{\e}{{\rm{e}}}                           
\newcommand{\fr}{\hfill$\blacksquare$}                      
\newcommand{\sppi}{{\mbox{\scriptsize\boldmath$\pi$}}}      
\newcommand{\relgradq}[2]{|\nabla #1|_{*,#2}} 
\newcommand{\weakgrad}[1]{|\nabla #1|_w} 
\newcommand{\weakgradq}[2]{|\nabla #1|_{w,#2}} 
\renewcommand{\mm}{\mathfrak m}
\renewcommand{\C}{{\rm Ch}}
\title{Density of Lipschitz functions and equivalence of weak gradients in metric measure spaces}
\begin{document}
\author{Luigi Ambrosio\
   \thanks{Scuola Normale Superiore, Pisa. email: \textsf{l.ambrosio@sns.it}}
   \and
   Nicola Gigli\
   \thanks{Nice University. email: \textsf{nicola.gigli@unice.fr}}
 \and
   Giuseppe Savar\'e\
   \thanks{Universit\`a di Pavia. email: \textsf{giuseppe.savare@unipv.it}}
   }

\maketitle

\begin{abstract}
We compare several notion of weak (modulus of) gradient in metric measure spaces and prove their equivalence. Using tools from optimal transportation theory we prove density in energy of Lipschitz maps independently of doubling and Poincar\'e assumptions on the metric measure space.
\end{abstract}

\tableofcontents

%
%

\section{Introduction}

In the last few years a great attention has been devoted to the
theory of Sobolev spaces $W^{1,q}$ on metric measure spaces
$(X,\sfd,\mm)$, see for instance \cite{Heinonen07} and
\cite{Hajlasz-Koskela}
for an overview on this subject. These definitions of Sobolev spaces
usually come with a weak definition of modulus of gradient, in
particular the notion of $q$-upper gradient has been introduced in
\cite{Koskela-MacManus} and used in \cite{Shanmugalingam00} for a
Sobolev space theory. Also, in \cite{Shanmugalingam00} the notion of
minimal $q$-upper gradient has been proved to be equivalent to the
notion of relaxed upper gradient arising in Cheeger's paper
\cite{Cheeger00}.

In this paper we consider a notion of gradient $|\nabla f|_{*,q}$
stronger than the one of \cite{Cheeger00}, because in the
approximation procedure we use Lipschitz functions and their slopes
as upper gradients, and a notion of $q$-weak upper gradient $|\nabla
f|_{w,q}$ weaker than the one of \cite{Shanmugalingam00}, and prove
their equivalence. As a consequence all four notions of gradient
turn out to be equivalent. A byproduct of our equivalence result is
the following density in energy of Lipschitz functions: if $f\in
L^q(X,\mm)$ has a $q$-weak upper gradient $|\nabla f|_{w,q}$ in
$L^q(X,\mm)$, then there exist Lipschitz functions $f_n$ convergent
to $f$ in $L^q(X,\mm)$ satisfying (here $|\nabla f_n|$ is the slope
of $f_n$)
\begin{equation}\label{densitylip}
\lim_{n\to\infty}\int_X\bigl||\nabla f_n|-|\nabla
f|_{w,q}\bigr|^q\,\d\mm=0.
\end{equation}
Notice that we can use Mazur's lemma to improve this convergence to
strong convergence in $W^{1,q}(X,\sfd,\mm)$, as soon as this space
is reflexive; this happens for instance in the context of the spaces
with Riemannian Ricci bounds from below considered in
\cite{Ambrosio-Gigli-Savare11bis}, with $q=2$.

We emphasize that our density result does not depend on doubling and
Poincar\'e assumptions on the metric measure structure; as it is
well known (see Theorem~4.14 and Theorem~4.24 in \cite{Cheeger00}),
these assumptions ensure the density in Sobolev norm of Lipschitz
functions, even in the Lusin sense (i.e. the Lipschitz approximating
functions $f_n$ coincide with $f$ on larger and larger sets). On the
other hand, the density in energy \eqref{densitylip} suffices for
many purposes, for instance the extension by approximation, from
Lipschitz to Sobolev functions, of functional inequalities like the
Poincar\'e or Sobolev inequality. For instance, our result can be used to 
show that if $(X,\sfd)$ is complete and separable and $\mm$ is a Borel measure finite on 
bounded sets, then the Poincar\'e inequality 
$$
\int_{B_r(x)}|f(y)-f_{B_r(x)}|\,\d\mm(y)\leq Cr\int_{B_{\lambda r}(x)}|\nabla f|(y)\,d\mm(y)
$$
holds for all $f:X\to\R$ Lipschitz on bounded sets if and only if it holds in the form 
$$
\int_{B_r(x)}|f(y)-f_{B_r(x)}|\,\d\mm(y)\leq Cr\int_{B_{\lambda r}(x)}g(y)\,d\mm(y)
$$
for all pairs $(f,g)$ with $f$ Borel and $g$
upper gradient of $f$.
This equivalence was proven in \cite{Heinonen-Koskela99} for proper, 
quasiconvex and doubling metric measure spaces, while in 
\cite{Koskela_removable} (choosing $X=\R^n\setminus E$ for suitable
 compact sets $E$) it is proven that completeness of the space can't 
be dropped.

The new notions of gradient, as well as their equivalence, have been
proved in \cite{Ambrosio-Gigli-Savare11} in the case $q=2$, see
Corollary~6.3 therein. Here we extend the result to general
exponents $q\in (1,\infty)$ and we give a presentation more focussed
on the equivalence problem. While the traditional proof of density
of Lipschitz functions relies on Poincar\'e inequality, maximal
functions and covering arguments to construct the ``optimal''
approximating Lipschitz functions $f_n$, our proof is more indirect
and provides the approximating functions using the $L^2$-gradient
flow of $\C_q(f):=q^{-1}\int_X|\nabla f|_{*,q}^q\,\d\mm$ and the
analysis of the dissipation rate along this flow of a suitable
``entropy'' $\int\Phi_q(f)\,\d\mm$ (in the case $q=2$, $\Phi(z)=z\log z$). This way we prove that $|\nabla
f|_{w,q}=|\nabla f|_{*,q}$ $\mm$-a.e., and then \eqref{densitylip}
follows by a general property of the minimal $q$-relaxed slope
$|\nabla f|_{*,q}$, see Proposition~\ref{prop:easy}.

The paper is organized as follows. In Section~2 we recall some
preliminary facts on absolutely continuous curves and gradient
flows. We also introduce the $p$-th Wasserstein distance and the
so-called superposition principle, that allows to pass from an
``Eulerian'' formulation (i.e. in terms of a curve of measures or a
curve of probability densities) to a ``Lagrangian'' one, namely a
probability measure in the space of absolutely continuous paths;
this will be the only tool from optimal transportation theory used
in the paper.\\* In Section~3 we study the pointwise properties of
the Hopf-Lax semigroup
$$
Q_tf(x):=\inf_{y\in X}f(y)+\frac{\sfd^p(x,y)}{pt^{p-1}}.
$$
In comparison with Section~3 of \cite{Ambrosio-Gigli-Savare11},
dealing with the case $p=2$, we consider for the sake of simplicity
only locally compact spaces and finite distances, but the
proofs can be modified to deal with more general cases, see also
Section~8. The results of this section overlap with those of the
forthcoming paper \cite{Gozlan} by Gozlan, Roberto and Samson, where
the HL semigroup is used in connection with the proof of transport
entropy inequalities.\\* In Section 4 we introduce the four
definitions of gradients we will be dealing with, namely:
\begin{itemize}
\item[(1)]
 the Cheeger gradient $|\nabla f|_{C,q}$ of \cite{Cheeger00} arising from the relaxation
of upper gradients;
\item[(2)]
 the minimal relaxed slope $|\nabla f|_{*,q}$ of
\cite{Ambrosio-Gigli-Savare11} arising from the relaxation of the
slope of Lipschitz functions;
\item[(3)]
 the minimal $q$-upper gradient
$|\nabla f|_{S,q}$ of \cite{Koskela-MacManus,Shanmugalingam00},
based on the validity of the upper gradient property out of a ${\rm
Mod}_q$-null set of curves;
\item[(4)] the minimal $q$-weak upper gradient of \cite{Ambrosio-Gigli-Savare11},
based on the validity of the upper gradient property out of a
$q$-null set of curves.
\end{itemize}
While presenting these definitions we will point out natural
relations between them, that lead to the chain of inequalities
$$
|\nabla f|_{w,q}\leq|\nabla f|_{S,q}\leq|\nabla f|_{C,q}\leq |\nabla
f|_{*,q}\qquad\text{$\mm$-a.e. in $X$,}
$$
with the concepts of \cite{Ambrosio-Gigli-Savare11} at the extreme
sides.

Section~5 contains some basically well known properties of weak
gradients, namely chain rules and stability under weak convergence.
Section~6 contains the basic facts we shall need on the gradient flow of the lower
semicontinuous functional $\C_q$ we need, in particular the entropy
dissipation rate
$$
\frac{\d}{\d t}\int_X\Phi(f_t)\,\d\mm=-\int_X\Phi''(f_t)|\nabla
f_t|^q_{*,q}\,\d\mm
$$
along this gradient flow.

In Section~7 we prove the equivalence of gradients. Starting from a
function $f$ with $|\nabla f|_{w,q}\in L^q(X,\mm)$ we approximate it
by the gradient flow of $f_t$ of $\C_q$ starting from $f$ and we use
the weak upper gradient property to get
$$
\limsup_{t\downarrow 0}\frac1t\int_0^t\int_X\frac{|\nabla
f_s|_{*,q}^q}{f_s^{p-1}}\,d\mm\d s\leq\int_X\frac{|\nabla
f|_{w,q}^q}{f^{p-1}}\,d\mm
$$
where $p=q/(q-1)$ is the dual exponent of $q$. Using the stability
properties of the relaxed gradients we eventually get $|\nabla
f|_{*,q}\leq|\nabla f|_{*,w}$ $\mm$-a.e. in $X$.

Finally, Section~8 discusses some potential extensions of the
results of this paper: we indicate how spaces which are not locally
compact and measures that are locally finite can be achieved. Other
extensions require probably a separate investigation, as the case of
Orlicz spaces and the limiting case $q=1$, corresponding to
$W^{1,1}$ and $BV$ spaces. In this latter case the lack of
reflexivity of $L^1(X,\mm)$ poses problems even in the definition of
the minimal gradients and we discuss this very briefly.

\smallskip
\noindent {\bf Acknowledgement.} The authors acknowledge the support
of the ERC ADG GeMeThNES. The authors also thank P.Koskela for
useful comments during the preparation of the paper.

\section{Preliminary notions}\label{sec:preliminary}

In this section we introduce some notation and recall a few basic
facts on absolutely continuous functions, gradient flows of convex
functionals and optimal transportation, see also
\cite{Ambrosio-Gigli-Savare08}, \cite{Villani09} as general
references.

\subsection{Absolutely continuous curves and slopes}
Let $(X,\sfd)$ be a metric space, $J\subset\R$ a closed interval and
$J\ni t\mapsto x_t\in X$. We say that $(x_t)$ is \emph{absolutely
continuous} if
$$
\sfd(x_s,x_t)\leq\int_s^tg(r)\,\d r\qquad\forall s,\,t\in J,\,\,s<t
$$
for some $g\in L^1(J)$. It turns out that, if $(x_t)$ is absolutely
continuous, there is a minimal function $g$ with this property,
called \emph{metric speed}, denoted by $|\dot{x}_t|$ and given for
a.e. $t\in J$ by
$$
|\dot{x}_t|=\lim_{s\to t}\frac{\sfd(x_s,x_t)}{|s-t|}.
$$
See \cite[Theorem~1.1.2]{Ambrosio-Gigli-Savare08} for the simple
proof.

We will denote by $C([0,1],X)$ the space of continuous curves from
$[0,1]$ to $(X,\sfd)$ endowed with the $\sup$ norm. The set
$AC^p([0,1],X)\subset C([0,1],X)$ consists of all absolutely
continuous curves $\gamma$ such that $\int_0^1|\dot\gamma_t|^p\,\d
t<\infty$: it is the countable union of the sets $\{\gamma:\
\int_0^1|\dot\gamma_t|^p\,\d t\leq n\}$, which are easily seen to be
closed if $p>1$. Thus $AC^p([0,1],X)$ is a Borel subset of
$C([0,1],X)$. The \emph{evaluation maps} $\e_t:C([0,1],X)\to X$ are
defined by
\[
\e_t(\gamma):=\gamma_t,
\]
and are clearly continuous.

Given $f:X\to\R$, we define \emph{slope} (also called local
Lipschitz constant) by
$$
|\nabla f|(x):=\lims_{y\to x}\frac{|f(y)-f(x)|}{\sfd(y,x)}.
$$
For $f,\,g:X\to\R$ Lipschitz it clearly holds
\begin{subequations}
\begin{align}
\label{eq:subadd}
|\nabla(\alpha f+\beta g)|&\leq|\alpha||\nabla f|+|\beta||\nabla g|\qquad\forall \alpha,\beta\in\R,\\
\label{eq:leibn} |\nabla (fg)|&\leq |f||\nabla g|+|g||\nabla f|.
\end{align}
\end{subequations}

{We shall also need the following calculus lemma.}
\begin{lemma}\label{lem:Fibonacci}
{
Let $f:(0,1)\to\R$, $q\in [1,\infty]$, $g\in L^q(0,1)$ nonnegative be satisfying
$$
|f(s)-f(t)|\leq\bigl|\int_s^t g(r)\,\d r\bigr|\qquad\text{for $\Leb{2}$-a.e. $(s,t)\in (0,1)^2$.}
$$
Then $f\in W^{1,q}(0,1)$ and $|f'|\leq g$ a.e. in $(0,1)$.}
\end{lemma}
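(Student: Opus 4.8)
The plan is to reduce everything to one-dimensional real analysis, the only delicate point being that the hypothesis is assumed merely for $\Leb 2$-a.e.\ pair $(s,t)$. Set $G(t):=\int_0^t g(r)\,\d r$; since $g\in L^q(0,1)\subset L^1(0,1)$ on the bounded interval, $G$ is absolutely continuous on $[0,1]$, hence uniformly continuous, with $G'=g$ a.e. Let $N\subset(0,1)^2$ be the $\Leb 2$-negligible set on which $|f(s)-f(t)|>|G(s)-G(t)|$; by Fubini there is a full-measure set $A\subset(0,1)$ such that, for every $s\in A$, the slice $\{t:(s,t)\in N\}$ is $\Leb 1$-negligible.

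The heart of the argument is to upgrade this to a pointwise statement on $A$: for every $s,t\in A$ one has $|f(s)-f(t)|\le|G(s)-G(t)|$. Indeed, the set of $u\in(0,1)$ with $(s,u)\in N$ or $(t,u)\in N$ is negligible, so one may choose $u_n\to s$ outside it; then
$$|f(s)-f(t)|\le|f(s)-f(u_n)|+|f(u_n)-f(t)|\le|G(s)-G(u_n)|+|G(u_n)-G(t)|,$$
and letting $n\to\infty$ and using continuity of $G$ gives the claim. Thus $f|_A$ has increments dominated by those of the uniformly continuous function $G$, so it is uniformly continuous on $A$ and extends to a continuous $\bar f$ on $[0,1]$ with $|\bar f(s)-\bar f(t)|\le|G(s)-G(t)|$ for all $s,t$; since $A$ has full measure, $\bar f=f$ $\Leb 1$-a.e.\ in $(0,1)$.

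From $|\bar f(s)-\bar f(t)|\le|G(s)-G(t)|=\bigl|\int_s^t g\bigr|$ the conclusion follows. First, $\bar f$ is absolutely continuous on $[0,1]$ (its increments are controlled by those of the absolutely continuous $G$) and bounded, so in particular $f=\bar f\in L^q(0,1)$; moreover $\bar f$ is differentiable a.e.\ with $\bar f(t)=\bar f(0)+\int_0^t\bar f'$. At every point $t$ that is simultaneously a differentiability point of $\bar f$ and a Lebesgue point of $g$ (i.e.\ a.e.\ $t$),
$$|\bar f'(t)|=\lim_{h\to0}\frac{|\bar f(t+h)-\bar f(t)|}{|h|}\le\lim_{h\to0}\frac{|G(t+h)-G(t)|}{|h|}=g(t),$$
so $\bar f'\in L^q(0,1)$ and, since $\bar f$ is absolutely continuous, its distributional derivative equals $\bar f'$. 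Hence $f\in W^{1,q}(0,1)$ with $|f'|\le g$ a.e.

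The only genuine obstacle is the passage from ``for $\Leb 2$-a.e.\ pair'' to ``for all pairs in a set of full measure'', handled by the triangle-inequality argument with the auxiliary points $u_n$; the remaining steps are routine facts about absolutely continuous functions and Lebesgue differentiation. One could instead mollify $f$ and estimate the difference quotients of $f\ast\rho_\eps$, using that translations preserve $\Leb 2$-null sets, but the argument above seems cleaner.
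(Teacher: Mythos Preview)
Your argument is correct, but it proceeds along a different line from the paper's. The paper works distributionally: after fixing one good $s$ (via Fubini) to see that $f\in L^\infty(0,1)$, it applies Fubini in the variables $(t,h)$ to find a sequence $h_i\downarrow 0$ such that $(t,t+h_i)\notin N$ for a.e.\ $t$, and then passes difference quotients against a test function $\phi\in C^1_c(0,1)$ to obtain
\[
\biggl|\int_0^1 f\,\phi'\,\d t\biggr|\le\int_0^1 g\,|\phi|\,\d t,
\]
so that the distributional derivative of $f$ is a signed measure dominated by $g\,\Leb 1$, giving $f\in W^{1,1}$ with $|f'|\le g$ a.e.\ and then $W^{1,q}$ by the integrability of $g$.

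Your route instead upgrades the $\Leb 2$-a.e.\ hypothesis to a genuine pairwise estimate on a full-measure set $A$ via the triangle-inequality trick with auxiliary points $u_n$, and from this builds the absolutely continuous representative $\bar f$ by hand; the derivative bound then follows from Lebesgue differentiation of $G$. This is slightly longer but more elementary---it avoids distributions entirely and makes the continuous representative explicit---while the paper's approach is a short, standard ``difference quotient against test functions'' computation that is perhaps more natural from a Sobolev-space viewpoint. Either method handles the only real issue (the $\Leb 2$-a.e.\ quantifier) by a Fubini argument, just sliced differently.
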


\begin{proof} 
Let $N\subset (0,1)^2$ be the $\Leb{2}$-negligible subset where the above 
inequality fails. Choosing $s\in (0,1)$, whose existence is ensured by Fubini's theorem, such that
$(s,t)\notin N$ for a.e. $t\in (0,1)$, we obtain that $f\in L^\infty(0,1)$. 
Since the set $\{(t,h)\in (0,1)^2:\ (t,t+h)\in N\cap (0,1)^2\}$
is $\Leb{2}$-negligible as well, we can apply Fubini's theorem to obtain that for a.e. $h$ it holds
$(t,t+h)\notin N$ for a.e. $t\in (0,1)$. Let $h_i\downarrow 0$ with this property and use
the identities
$$
\int_0^1f(t)\frac{\phi(t+h)-\phi(t)}{h}\,\d t=-\int_0^1\frac{f(t-h)-f(t)}{-h}\phi(t)\,\d t
$$
with $\phi\in C^1_c(0,1)$ and $h=h_i$ sufficiently small to get
$$
\biggl|\int_0^1f(t)\phi'(t)\,\d t\biggr|\leq\int_0^1g(t)|\phi(t)|\,\d t.
$$
It follows that the distributional derivative of $f$ is a signed
measure $\eta$ with finite total variation which satisfies
\begin{displaymath}
  -\int_0^1f\phi'\,\d t=\int_0^1 \phi\,\d\eta,\qquad
  \Bigl|\int_0^1 \phi\,\d\eta\Bigr|\le \int_0^1g|\phi|\,\d
t\quad\text{for every }\phi\in C^1_c(0,1);
\end{displaymath}
therefore $\eta$ is absolutely continuous with respect to the Lebesgue
measure with $|\eta|\le g\Leb 1$. 
This gives the $W^{1,1}(0,1)$ regularity and, at the same time, the inequality
$|f'|\leq g$ a.e. in $(0,1)$. The case $q>1$ immediately follows
by applying this inequality when $g\in L^q(0,1)$.
\end{proof}

\subsection{Gradient flows of convex functionals}

Let $H$ be an Hilbert space, $\Psi:H\to\R\cup\{+\infty\}$ convex and
lower semicontinuous and $D(\Psi)=\{\Psi<\infty\}$ its finiteness
domain. Recall that a gradient flow $x:(0,\infty)\to H$ of $\Psi$ is
a locally absolutely continuous map with values in $D(\Psi)$
satisfying
$$
-\frac{\d}{\dt}x_t\in\partial^-\Psi(x_t)\qquad\text{for a.e. $t\in
(0,\infty)$.}
$$
Here $\partial^-\Psi(x)$ is the subdifferential of $\Psi$, defined
at any $x\in D(\Psi)$ by
$$
\partial^-\Psi(x):=\left\{p\in H^*:\ \Psi(y)\geq\Psi(x)+\langle
p,y-x\rangle\,\,\forall y\in H\right\}.
$$

We shall use the fact that for all $x_0\in\overline{D(\Psi)}$ there
exists a unique gradient flow $x_t$ of $\Psi$ starting from $x_0$,
i.e. $x_t\to x_0$ as $t\downarrow 0$, and that $t\mapsto\Psi(x_t)$
is nonincreasing and locally absolutely continuous in $(0,\infty)$.
In addition, this unique solution exhibits a regularizing effect,
namely $-\tfrac{\d}{\d t}x_t$ is for a.e. $t\in (0,\infty)$ the
element of minimal norm in
$\partial^-\Psi(x_t)$. 

\subsection{The space $(\prob X,W_p)$ and the superposition principle}

Let $(X,\sfd)$ be a complete and separable metric space and $p\in
[1,\infty)$. We use the notation $\prob X$ for the set of all Borel
probability measures on $X$. Given $\mu,\,\nu\in\prob X$, we define
the Wasserstein (extended) distance $W_p(\mu,\nu)\in [0,\infty]$
between them as
\[
W_p^p(\mu,\nu):=\min\int \sfd^p(x,y)\,\d\ggamma(x,y).
\]
Here the minimization is made in the class $\Gamma(\mu,\nu)$ of all
probability measures $\ggamma$ on $X\times X$ such that
$\pi^1_\#\ggamma=\mu$ and $\pi^2_\#\ggamma=\nu$, where
$\pi^i:X\times X\to X$, $i=1,\,2$, are the coordinate projections
and $f_\#:\prob{Y}\to\prob{Z}$ is the push-forward operator induced
by a Borel map $f:Y\to Z$.

An equivalent definition of $W_p$ comes from the dual formulation of
the transport problem. In the case when $(X,\sfd)$ has finite
diameter the dual formulation takes the simplified form
\begin{equation}
\label{eq:dualitabase} \frac1pW_p^p(\mu,\nu)=\sup_{\psi\in{\rm
Lip}(X)}\int\psi\, \d\mu+\int \psi^c\,\d\nu,
\end{equation}
where the $c$-transform $\psi^c$ is defined by
\[
\psi^c(y):=\inf_{x\in X}\frac{\sfd^p(x,y)}p-\psi(x).
\]

We will need the following result, proved in \cite{Lisini07}: it
shows how to associate to an absolutely continuous curve $\mu_t$
w.r.t. $W_p$ a plan $\ppi\in\prob{C([0,1],X)}$ representing the
curve itself (see also \cite[Theorem~8.2.1]{Ambrosio-Gigli-Savare08}
for the Euclidean case).

\begin{proposition}[Superposition principle]\label{prop:lisini}
Let $(X,\sfd)$ be a complete and separable metric space with $\sfd$
bounded, $p\in (1,\infty)$ and let $\mu_t\in AC^p\bigl([0,T];(\prob X,W_p)\bigr)$.
Then there exists $\ppi\in\prob{C([0,1],X)}$,
concentrated on $AC^p([0,1],X)$, such that $(\e_t)_\sharp\ppi=\mu_t$
for any $t\in[0,T]$ and
\begin{equation}\label{eq:Lisini}
\int|\dot\gamma_t|^p\,\d\ppi(\gamma)=|\dot\mu_t|^p\qquad \text{for
a.e. $t\in [0,T]$.}
\end{equation}
\end{proposition}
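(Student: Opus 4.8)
The goal is to prove the Superposition Principle (Proposition~\ref{prop:lisini}): given $\mu_t\in AC^p([0,T];(\prob X,W_p))$ with $\sfd$ bounded and $p\in(1,\infty)$, we must produce a measure $\ppi$ on $C([0,1],X)$, concentrated on $AC^p$, with $(\e_t)_\sharp\ppi=\mu_t$ and $\int|\dot\gamma_t|^p\,\d\ppi=|\dot\mu_t|^p$ for a.e.\ $t$. Since the statement is attributed to Lisini~\cite{Lisini07} (and to \cite[Thm.~8.2.1]{Ambrosio-Gigli-Savare08} in the Euclidean case), my plan follows the classical strategy: regularize the curve in time, apply a discrete interpolation/gluing construction at scale $1/n$ using optimal plans between consecutive measures, pass to the limit via tightness, and then identify the metric speeds through lower semicontinuity plus a matching upper bound. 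First I would reduce to the case where $t\mapsto\mu_t$ is, say, Lipschitz in $W_p$: this can be arranged by reparametrizing by arc length (the metric derivative $|\dot\mu_t|$ is in $L^p(0,T)$, so $\int_0^t|\dot\mu_r|\,\d r$ is absolutely continuous and strictly increasing after adding $\eps t$), proving the result for the reparametrized curve, and transferring back; one keeps track of the Jacobian factor so that \eqref{eq:Lisini} survives the change of variables.

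Next, for each $n$ partition $[0,1]$ into intervals of length $1/n$ and, using the minimizers in the definition of $W_p$, choose optimal plans $\ggamma^{n}_{i}\in\Gamma(\mu_{i/n},\mu_{(i+1)/n})$. Glue these along the common marginals (by the standard measure-theoretic gluing lemma on $X^{n+1}$) to obtain a measure on $(n+1)$-tuples, then push forward under the map that sends a tuple $(x_0,\dots,x_n)$ to the curve in $C([0,1],X)$ obtained by \emph{geodesic-free} interpolation --- since $X$ need not be geodesic, one interpolates by staying at $x_i$ on $[i/n,(i+1)/n)$ and defining a genuine curve only after the limit, or more cleanly one uses a piecewise-``constant with jumps'' right-continuous curve and then passes to $C([0,1],X)$ in the limit. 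Call the resulting measure $\ppi^n$ on the path space. The key estimates are: (i) a uniform bound $\int\!\!\int_0^1|\dot\gamma_t|^p\,\d t\,\d\ppi^n\le (1+o(1))\int_0^1|\dot\mu_t|^p\,\d t$, obtained from $W_p^p(\mu_{i/n},\mu_{(i+1)/n})\le (1/n)^{p-1}\int_{i/n}^{(i+1)/n}|\dot\mu_r|^p\,\d r$ (H\"older applied to the length bound) and the fact that the interpolant on each subinterval realizes exactly that transport cost; (ii) tightness of $\{\ppi^n\}$ in $\prob{C([0,1],X)}$, which follows from (i) together with the equi-absolute-continuity it encodes (control of moduli of continuity) and tightness of the $\mu_t$'s; here one uses that $\{\gamma:\int_0^1|\dot\gamma_t|^p\,\d t\le R\}$ is relatively compact in $C([0,1],X)$ when the marginals are tight.

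By Prokhorov, extract a subsequence $\ppi^n\weakto\ppi$. Then $(\e_t)_\sharp\ppi=\mu_t$ holds for $t$ in the countable dense set of dyadics (where $(\e_t)_\sharp\ppi^n=\mu_t$ exactly for $n$ large), hence for all $t$ by continuity of both sides. For the metric-speed identity: lower semicontinuity of $\gamma\mapsto\int_0^1|\dot\gamma_t|^p\,\d t$ under uniform convergence, combined with the bound (i), gives $\ppi$ concentrated on $AC^p$ with $\int\!\!\int_0^1|\dot\gamma_t|^p\,\d t\,\d\ppi\le\int_0^1|\dot\mu_t|^p\,\d t$. For the pointwise-in-$t$ statement one argues that, for any $0\le s<t\le 1$, $W_p(\mu_s,\mu_t)\le(\int\sfd^p(\gamma_s,\gamma_t)\,\d\ppi)^{1/p}\le(\int (t-s)^{p-1}\!\int_s^t|\dot\gamma_r|^p\,\d r\,\d\ppi)^{1/p}$, so the function $r\mapsto\int|\dot\gamma_r|^p\,\d\ppi$ dominates (a.e.) the $p$-th power of the metric derivative of $\mu$; integrating and using the reverse global inequality forces equality a.e.\ in $t$, which is \eqref{eq:Lisini}.

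\textbf{Main obstacle.} The delicate point is the tightness of $\{\ppi^n\}$ in $C([0,1],X)$ and, relatedly, ensuring the discrete interpolants actually live in (or converge into) $C([0,1],X)$ with controlled metric speed --- in a non-geodesic space one cannot literally connect $x_i$ to $x_{i+1}$ by a short curve, so the clean approach is to keep the interpolants as right-continuous jump curves, prove tightness and the speed bound for those in a suitable space (e.g.\ via a Skorokhod-type argument or by noting the jumps vanish in $L^p$ as $n\to\infty$), and show the weak limit concentrates on genuine continuous curves. Verifying that the limit curve's metric derivative is not lost in this passage --- i.e.\ that no ``speed escapes'' in the jumps --- is exactly where the H\"older/length bookkeeping from step (i) must be done carefully, and it is the heart of Lisini's argument.
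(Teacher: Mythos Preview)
The paper does not prove this proposition at all: it is stated as a tool and attributed to Lisini~\cite{Lisini07} (with the Euclidean precursor in \cite[Theorem~8.2.1]{Ambrosio-Gigli-Savare08}), so there is no ``paper's own proof'' to compare against. Your sketch is a faithful outline of Lisini's original argument --- discretize in time, glue optimal plans between consecutive marginals, obtain the energy bound $\sum_i W_p^p(\mu_{i/n},\mu_{(i+1)/n})\le n^{1-p}\int_0^1|\dot\mu_t|^p\,\d t$, extract a weak limit by tightness, and close with lower semicontinuity plus the reverse inequality coming from $(\e_t)_\sharp\ppi=\mu_t$.

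One comment on the obstacle you single out. Your device of piecewise-constant jump paths, followed by showing the weak limit lands on continuous curves, can be made to work but is awkward (the intermediate objects are not in $C([0,1],X)$, so one is forced through a Skorokhod-type space). The standard and cleaner workaround, used by Lisini, is to isometrically embed $(X,\sfd)$ into a separable Banach space $Y$ via the Kuratowski map $x\mapsto \sfd(x,\cdot)-\sfd(\bar x,\cdot)\in C_b(X)$; in $Y$ one interpolates \emph{linearly} between the sampled points $x_i$, obtaining genuine curves in $C([0,1],Y)$ whose $p$-energy on each subinterval equals exactly $n^{p-1}\sfd^p(x_i,x_{i+1})$. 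Tightness in $C([0,1],Y)$ then follows directly from the uniform energy bound and tightness of the marginals (Ascoli--Arzel\`a with the $L^p$ modulus of continuity), and the limit measure is automatically concentrated on curves with values in the closed subset $X\subset Y$ because each $(\e_t)_\sharp\ppi=\mu_t$ is supported in $X$ and the curves are continuous. This removes the need for any Skorokhod-type argument and makes the ``no speed escapes in the jumps'' issue disappear, since there are no jumps.
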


\section{Hopf-Lax formula and Hamilton-Jacobi equation}\label{sec:hopflax}

Aim of this section is to study the properties of the Hopf-Lax
formula in a metric setting and its relations with the
Hamilton-Jacobi equation. Here we assume for simplicity that
$(X,\sfd)$ is a compact metric space, see Section~\ref{sextensions}
for a more general discussion. Notice that there is no reference
measure $\mm$ here. We fix a power $p\in (1,\infty)$ and denote by
$q$ the dual exponent.

Let $f:X\to\R$ be a Lipschitz function. For $t>0$ define
\begin{equation}\label{eq:Nicola1}
F(t,x,y):=f(y)+\frac{\sfd^p(x,y)}{pt^{p-1}},
\end{equation}
and the function $Q_tf:X\to\R$ by
\begin{equation}\label{eq:Nicola2}
Q_tf(x):=\inf_{y\in X}F(t,x,y)=\min_{y\in X}F(t,x,y).
\end{equation}
Also, we introduce the functions $D^+,\,D^-:X\times(0,\infty)\to\R$
as
\begin{equation}\label{eq:defdpm}
\begin{split}
D^+(x,t)&:=\max\, \sfd(x,y),\\
D^-(x,t)&:=\min\, \sfd(x,y),\\
\end{split}
\end{equation}
where, in both cases, the $y$'s vary among all minima of
$F(t,x,\cdot)$. We also set $Q_0f=f$ and $D^\pm(x,0)=0$. Arguing as
in \cite[Lemma~3.1.2]{Ambrosio-Gigli-Savare08} it is easy to check
that the map $[0,\infty)\ni(t,x)\mapsto Q_tf(x)$ is continuous.
Furthermore, the fact that $f$ is Lipschitz easily yields
\begin{equation}
\label{eq:boundD} D^-(x,t)\leq D^+(x,t)\leq t(p\Lip(f))^{1/(p-1)}.
\end{equation}

\begin{proposition}[Monotonicity of $D^\pm$]\label{prop:dmon}
For all $x\in X$ it holds
\begin{equation}\label{eq:basic_mono}
D^+(x,t)\leq D^-(x,s)\qquad 0\leq t< s.
\end{equation}
As a consequence, $D^+(x,\cdot)$ and $D^-(x,\cdot)$ are both
nondecreasing, and they coincide with at most countably many
exceptions in $[0,\infty)$.
\end{proposition}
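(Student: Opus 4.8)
The plan is to use the classical ``cross-comparison'' argument between minimizers of the one-parameter family $F(t,x,\cdot)$. Fix $x\in X$ and $0<t<s$, and choose any minimizer $y_t$ of $F(t,x,\cdot)$ and any minimizer $y_s$ of $F(s,x,\cdot)$; these exist since $X$ is compact and $F(\cdot,x,\cdot)$ is continuous for positive times. Writing the two minimality inequalities
\[
f(y_t)+\frac{\sfd^p(x,y_t)}{pt^{p-1}}\le f(y_s)+\frac{\sfd^p(x,y_s)}{pt^{p-1}},
\qquad
f(y_s)+\frac{\sfd^p(x,y_s)}{ps^{p-1}}\le f(y_t)+\frac{\sfd^p(x,y_t)}{ps^{p-1}},
\]
and summing them, the values $f(y_t),f(y_s)$ cancel and, after rearranging,
\[
\Bigl(\sfd^p(x,y_t)-\sfd^p(x,y_s)\Bigr)\Bigl(\frac{1}{t^{p-1}}-\frac{1}{s^{p-1}}\Bigr)\le 0 .
\]
Since $p>1$ and $t<s$, the second factor is strictly positive, so $\sfd(x,y_t)\le\sfd(x,y_s)$. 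As this holds for \emph{every} pair of minimizers, specializing $y_t$ to realize $D^+(x,t)$ and $y_s$ to realize $D^-(x,s)$ gives \eqref{eq:basic_mono}. The remaining case $t=0$ is immediate from the convention $D^\pm(x,0)=0\le D^-(x,s)$.

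For the consequences, I would combine \eqref{eq:basic_mono} with the trivial bound $D^-(x,\cdot)\le D^+(x,\cdot)$ (a minimum never exceeds the corresponding maximum over the same set). For $0\le t<s$ this yields
\[
D^+(x,t)\le D^-(x,s)\le D^+(x,s),\qquad D^-(x,t)\le D^+(x,t)\le D^-(x,s),
\]
so $D^+(x,\cdot)$ and $D^-(x,\cdot)$ are nondecreasing on $[0,\infty)$. A nondecreasing real function has at most countably many discontinuity points; if $s$ is a point of left-continuity of $D^+(x,\cdot)$ (in particular for all but countably many $s$, and trivially for $s=0$ by the convention), then letting $t\uparrow s$ in $D^+(x,t)\le D^-(x,s)\le D^+(x,s)$ forces $D^-(x,s)=D^+(x,s)$. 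Hence the two functions coincide outside the at most countable set of discontinuities of $D^+(x,\cdot)$.

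I do not anticipate a genuine obstacle: the crux is the one-line summation of the two minimality inequalities, and the only mild points of care are the degenerate time $t=0$ and invoking compactness to guarantee that minimizers exist (already recorded before the statement). Everything else is elementary monotone-function bookkeeping.
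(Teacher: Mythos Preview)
Your proof is correct and follows essentially the same route as the paper's: the same cross-comparison of the two minimality inequalities, then the same monotone-function bookkeeping. The only cosmetic difference is that the paper argues equality of $D^\pm$ at right-continuity points of $D^-(x,\cdot)$ (taking $s\downarrow t$), whereas you use left-continuity points of $D^+(x,\cdot)$ (taking $t\uparrow s$); both yield the conclusion equally well.
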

\begin{proof}
Fix $x\in X$. For $t=0$ there is nothing to prove. Now pick $0<t<s$
and choose $x_t$ and $x_s$ minimizers of $F(t,x,\cdot)$ and
$F(s,x,\cdot)$ respectively, such that $\sfd(x,x_t)=D^+(x,t)$ and $
\sfd(x,x_s)=D^-(x,s)$. The minimality of $x_t,\,x_s$ gives
\[
\begin{split}
f(x_t)+\frac{\sfd^p(x_t,x)}{pt^{p-1}}&\leq f(x_s)+\frac{\sfd^p(x_s,x)}{pt^{p-1}}\\
f(x_s)+\frac{\sfd^p(x_s,x)}{ps^{p-1}}&\leq
f(x_t)+\frac{\sfd^p(x_t,x)}{ps^{p-1}}.
\end{split}
\]
Adding up and using the fact that $\tfrac1t\geq\tfrac 1s$ we deduce
\[
D^+(x,t)=\sfd(x_t,x)\leq \sfd(x_s,x)= D^-(x,s),
\]
which is \eqref{eq:basic_mono}. Combining this with the inequality
$D^-\leq D^+$ we immediately obtain that both functions are
nonincreasing. At a point of right continuity of $D^-(x,\cdot)$ we
get
$$
D^+(x,t)\leq\inf_{s>t}D^-(x,s)=D^-(x,t).
$$
This implies that the two functions coincide out of a countable set.
\end{proof}

Next, we examine the semicontinuity properties of $D^\pm$. These
properties imply that points $(x,t)$ where the equality
$D^+(x,t)=D^-(x,t)$ occurs are continuity points for both $D^+$ and
$D^-$.

\begin{proposition}[Semicontinuity of $D^\pm$]
$D^+$ is upper semicontinuous and $D^-$ is lower semicontinuous in
$X\times [0,\infty)$.
\end{proposition}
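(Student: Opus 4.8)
The plan is to exploit the compactness of $X$ together with the joint continuity of $F$ on $X\times X\times(0,\infty)$, the single underlying fact being that a limit of minimizers of $F$ is again a minimizer. First I would dispose of the points with $t=0$: if $(x_n,t_n)\to(x,0)$ then \eqref{eq:boundD} gives $0\le D^-(x_n,t_n)\le D^+(x_n,t_n)\le t_n(p\Lip(f))^{1/(p-1)}\to 0$, so both $D^+$ and $D^-$ are in fact continuous (hence \emph{a fortiori} u.s.c., resp. l.s.c.) at every point $(x,0)$. From now on I may assume $t>0$, and then $t_n>0$ for $n$ large.

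Next I would record the basic compactness/continuity observation. Since $X$ is compact and $F(t,x,\cdot)$ is continuous, the set $M(t,x)$ of its minimizers is nonempty and compact, so the quantities $D^+(x,t)=\max_{y\in M(t,x)}\sfd(x,y)$ and $D^-(x,t)=\min_{y\in M(t,x)}\sfd(x,y)$ are attained. Moreover, if $(x_n,t_n)\to(x,t)$ with $t>0$ and $y_n\in M(t_n,x_n)$ with $y_n\to y$ (a convergent subsequence always exists by compactness of $X$), then letting $n\to\infty$ in $F(t_n,x_n,y_n)\le F(t_n,x_n,z)$ — legitimate because $F$ is jointly continuous on the region $t>0$ — yields $F(t,x,y)\le F(t,x,z)$ for every $z\in X$, i.e. $y\in M(t,x)$; and $\sfd(x_n,y_n)\to\sfd(x,y)$ by continuity of $\sfd$. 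With this in hand, the two semicontinuity statements are mirror images of one another. For the upper semicontinuity of $D^+$: given $(x_n,t_n)\to(x,t)$, pass to a subsequence along which $D^+(x_n,t_n)\to\limsup_n D^+(x_n,t_n)$, choose $y_n\in M(t_n,x_n)$ with $\sfd(x_n,y_n)=D^+(x_n,t_n)$, and extract a further subsequence with $y_n\to y$; then $y\in M(t,x)$, whence $\limsup_n D^+(x_n,t_n)=\lim_n\sfd(x_n,y_n)=\sfd(x,y)\le D^+(x,t)$. For the lower semicontinuity of $D^-$: pass to a subsequence realizing $\liminf_n D^-(x_n,t_n)$, choose $y_n\in M(t_n,x_n)$ with $\sfd(x_n,y_n)=D^-(x_n,t_n)$, extract $y_n\to y$; again $y\in M(t,x)$, so $\liminf_n D^-(x_n,t_n)=\lim_n\sfd(x_n,y_n)=\sfd(x,y)\ge D^-(x,t)$.

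There is no serious obstacle here, only a point that deserves a word of care: in the $D^-$ argument one does \emph{not} claim that the limit point $y$ realizes $D^-(x,t)$ — merely that, being a minimizer, its distance from $x$ is at least $D^-(x,t)$, which is exactly what lower semicontinuity requires. The apparent asymmetry with the $D^+$ computation is illusory, since both proofs rest solely on the fact that limits of minimizers of $F$ are minimizers of the limiting $F$. Finally, as the proposition remarks, combining u.s.c. of $D^+$, l.s.c. of $D^-$ and the inequality $D^-\le D^+$ shows that any point $(x,t)$ with $D^+(x,t)=D^-(x,t)$ is a continuity point of both functions, which will be used together with Proposition~\ref{prop:dmon} in the sequel.
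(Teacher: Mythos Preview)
Your proof is correct and follows essentially the same line as the paper's: pick minimizers $y_n$ realizing $D^\pm(x_n,t_n)$, pass to a convergent subsequence, and show the limit $y$ is a minimizer of $F(t,x,\cdot)$, from which the semicontinuity inequalities follow. The only cosmetic difference is that the paper verifies $y\in M(t,x)$ by invoking the continuity of $(t,x)\mapsto Q_tf(x)$ (so that $F(t_n,x_n,y_n)=Q_{t_n}f(x_n)\to Q_tf(x)$, making $(y_n)$ a minimizing sequence for $F(t,x,\cdot)$), whereas you pass to the limit directly in the inequality $F(t_n,x_n,y_n)\le F(t_n,x_n,z)$ using joint continuity of $F$; both are equally valid.
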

\begin{proof}
We prove lower semicontinuity of $D^-$, the proof of upper
semicontinuity of $D^+$ being similar. Let $(x_i,t_i)$ be any
sequence converging to $(x,t)$ such that the limit of $D^-(x_i,t_i)$
exists and assume that $t>0$ (the case $t=0$ is trivial). For every
$i$, let $(y_i)$ be a minimum of $F(t_i,x_i,\cdot)$ for which
$\sfd(y_i,x_i)=D^-(x_i,t_i)$, so that
\[
f(y_i)+\frac{\sfd^p(y_i,x_i)}{pt_i^{p-1}}=Q_{t_i}f(x_i).
\]
The continuity of $(x,t)\mapsto Q_tf(x)$ gives that
$\lim_iQ_{t_i}f(x_i)=Q_tf(x)$, thus
\[
\lim_{i\to\infty} f(y_i)+\frac{\sfd^p(y_i,x)}{pt^{p-1}}=Q_tf(x),
\]
that is: $i\mapsto y_i$ is a minimizing sequence for $F(t,x,\cdot)$.
Since $(X,\sfd)$ is compact, possibly passing to a subsequence, not
relabeled, we may assume that $(y_i)$ converges to some $y$ as
$i\to\infty$. Therefore
\[
D^-(x,t)\leq
\sfd(x,y)=\lim_{i\to\infty}\sfd(x_i,y_i)=\lim_{i\to\infty}D^-(x_i,t_i).
\]
\end{proof}

\begin{proposition}[Time derivative of
$Q_tf$]\label{prop:timederivative} The map $t\mapsto Q_tf$ is
Lipschitz from $[0,\infty)$ to $C(X)$ and, for all $x\in X$, it
satisfies:
\begin{equation}\label{eq:Dini1}
\frac{\d}{\d
t}Q_tf(x)=-\frac{1}{q}\bigl[\frac{D^{\pm}(x,t)}{t}\bigr]^p,
\end{equation}
for any $t>0$, with at most countably many exceptions.
\end{proposition}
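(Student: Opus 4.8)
Fix $x\in X$ and set $\phi(t):=Q_tf(x)$. The strategy is to compute separately the right and left derivatives of $\phi$ at each $t>0$ by a comparison argument: plug a minimizer of $F(t,x,\cdot)$ into $F(t',x,\cdot)$ for $t'$ close to $t$ and vice versa, and control the distances of the chosen minimizers to $x$ through $D^\pm(x,\cdot)$, using the monotonicity in Proposition~\ref{prop:dmon} together with the semicontinuity of $D^\pm$.

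For the right derivative at $t>0$, let $h>0$ and choose $x_t$ minimizing $F(t,x,\cdot)$ with $\sfd(x_t,x)=D^+(x,t)$ and $x_{t+h}$ minimizing $F(t+h,x,\cdot)$ with $\sfd(x_{t+h},x)=D^+(x,t+h)$. Using $x_t$ as a competitor for $Q_{t+h}f(x)$, and separately $x_{t+h}$ as a competitor for $Q_tf(x)$, and recalling that $\tfrac{1}{(t+h)^{p-1}}-\tfrac1{t^{p-1}}<0$, one obtains
$$\frac{[D^+(x,t+h)]^p}{p}\Bigl(\tfrac{1}{(t+h)^{p-1}}-\tfrac1{t^{p-1}}\Bigr)\ \le\ \phi(t+h)-\phi(t)\ \le\ \frac{[D^+(x,t)]^p}{p}\Bigl(\tfrac{1}{(t+h)^{p-1}}-\tfrac1{t^{p-1}}\Bigr).$$
Dividing by $h$ and letting $h\downarrow 0$, the quotient $h^{-1}\bigl((t+h)^{1-p}-t^{1-p}\bigr)$ tends to $-(p-1)t^{-p}$, while $D^+(x,t+h)\to D^+(x,t)$ because $D^+(x,\cdot)$ is nondecreasing (Proposition~\ref{prop:dmon}) and upper semicontinuous. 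Hence $\phi$ has at $t$ a right derivative equal to $-\tfrac{p-1}{p}\,[D^+(x,t)/t]^p=-\tfrac1q\,[D^+(x,t)/t]^p$. A symmetric argument — now realizing $D^-$ by the chosen minimizers and using that $D^-(x,\cdot)$ is nondecreasing and lower semicontinuous, so that $D^-(x,s)\to D^-(x,t)$ as $s\uparrow t$ — shows that $\phi$ has at $t$ a left derivative equal to $-\tfrac1q\,[D^-(x,t)/t]^p$.

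It remains to assemble the conclusions. By \eqref{eq:boundD} we have $D^\pm(x,t)/t\le (p\Lip(f))^{1/(p-1)}$ for every $t>0$, so both one-sided derivatives of $\phi$ are bounded in modulus by $L:=\tfrac1q(p\Lip(f))^{p/(p-1)}$, uniformly in $x$ and $t$. Since $t\mapsto Q_tf(x)$ is continuous on $[0,\infty)$ (recalled above) and has everywhere on $(0,\infty)$ a right derivative of modulus at most $L$, it is $L$-Lipschitz on $[0,\infty)$; as $L$ does not depend on $x$, the map $t\mapsto Q_tf$ is $L$-Lipschitz from $[0,\infty)$ to $C(X)$. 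Finally, by the last assertion of Proposition~\ref{prop:dmon}, for the fixed $x$ the equality $D^+(x,t)=D^-(x,t)$ holds for all $t\in[0,\infty)$ outside an at most countable set; at each such $t>0$ the two one-sided derivatives of $\phi$ coincide, whence $\phi$ is differentiable there and \eqref{eq:Dini1} holds.

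\textbf{Main obstacle.}
The only delicate point is the passage to the limit in the difference-quotient estimates, i.e. establishing $D^+(x,t+h)\to D^+(x,t)$ as $h\downarrow0$ and $D^-(x,s)\to D^-(x,t)$ as $s\uparrow t$. This is exactly where the monotonicity of $D^\pm$ from Proposition~\ref{prop:dmon} has to be paired with their one-sided semicontinuity, and where one must take care to insert the $D^+$-minimizer, respectively the $D^-$-minimizer, on the correct side of each inequality according to the sign of $(t+h)^{1-p}-t^{1-p}$. Everything else is routine manipulation of the explicit $t$-dependence of $F$.
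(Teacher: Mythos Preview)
Your argument is correct and follows essentially the same comparison strategy as the paper: use a minimizer at one time as a competitor at a nearby time to sandwich the difference quotient, and then pass to the limit with the help of Proposition~\ref{prop:dmon}. The only difference is one of detail: the paper writes a single pair of inequalities for $t<s$ with unspecified minimizers and appeals tersely to Proposition~\ref{prop:dmon}, whereas you go one step further and compute the one-sided derivatives at \emph{every} $t>0$, obtaining $\phi'_+(t)=-\tfrac1q[D^+(x,t)/t]^p$ and $\phi'_-(t)=-\tfrac1q[D^-(x,t)/t]^p$ by pairing the monotonicity of $D^\pm$ with the semicontinuity from the proposition immediately following Proposition~\ref{prop:dmon}. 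This extra precision is not needed for the statement but is harmless, and it makes transparent exactly where the semicontinuity enters (the paper's proof uses it implicitly through the continuity of $D^\pm$ at points where they agree).
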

\begin{proof}
Let $t<s$ and $x_t$, $x_s$ be minima of $F(t,x,\cdot)$
and $F(s,x,\cdot)$. We have
\[
\begin{split}
Q_sf(x)-Q_tf(x)&\leq F(s,x,x_t)-F(t,x,x_t)=\frac{\sfd^p(x,x_t)}{p}\frac{t^{p-1}-s^{p-1}}{t^{p-1}s^{p-1}},\\
Q_sf(x)-Q_tf(x)&\geq
F(s,x,x_s)-F(t,x,x_s)=\frac{\sfd^p(x,x_s)}{p}\frac{t^{p-1}-s^{p-1}}{t^{p-1}s^{p-1}},
\end{split}
\]
which gives that $t\mapsto Q_tf(x)$ is Lipschitz in $(\eps,\infty)$
for any $\eps>0$ uniformly with respect to $x\in X$. Also, dividing
by $(s-t)$ and taking Proposition~\ref{prop:dmon} into account, we
get \eqref{eq:Dini1}. Now notice that from \eqref{eq:boundD} we get
that $q|\frac{\d}{\d t}Q_tf(x)|\leq p^q[\Lip(f)]^q$ for any $x\in X$
and a.e. $t$, which, together with the pointwise convergence of
$Q_tf$ to $f$ as $t\downarrow 0$, yields that $t\mapsto Q_tf\in
C(X)$ is Lipschitz in $[0,\infty)$.
\end{proof}

In the next proposition we bound from above the slope of $Q_tf$ at $x$ with
$|D^+(x,t)/t|^{p-1}$; actually we shall prove a more precise statement, in connection with \S\ref{sec:improveslope},
which involves the \emph{asymptotic Lipschitz constant}
\begin{equation}\label{eq:asymlip}
{\rm Lip}_a(f,x):=\inf_{r>0}{\rm Lip}\bigl(f,B_r(x)\bigr)=\lim_{r\downarrow 0}{\rm Lip}\bigl(f,B_r(x)\bigr).
\end{equation}
Notice that ${\rm Lip}(f)\geq {\rm Lip}_a(f,x)\geq |\nabla f|^*(x)$, where $|\nabla f|^*$ is the upper semicontinuous envelope 
of the slope of $f$. The second inequality is easily seen to be an equality in length spaces.

\begin{proposition}[Bound on the asymptotic Lipschitz constant of $Q_tf$]\label{prop:slopesqt}
For $(x,t)\in X\times (0,\infty)$ it holds:
\begin{equation}
\label{eq:hjbss} {\rm Lip}_a(Q_tf,x)\leq
\bigl[\frac{D^+(x,t)}t\bigr]^{p-1}.
\end{equation}
\end{proposition}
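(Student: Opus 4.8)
The plan is to estimate directly the Lipschitz constant of $Q_tf$ on a small ball $B_r(x)$ and then let $r\downarrow 0$. So fix $t>0$, fix $x$, and take two points $x_1,x_2\in B_r(x)$. Let $y_2\in X$ be a minimizer of $F(t,x_2,\cdot)$, which exists by compactness of $X$; by the very definition of $D^+$ we have $\sfd(x_2,y_2)\le D^+(x_2,t)$. Comparing the value $Q_tf(x_1)$ with the competitor $y_2$ gives
$$
Q_tf(x_1)-Q_tf(x_2)\le F(t,x_1,y_2)-F(t,x_2,y_2)=\frac{\sfd^p(x_1,y_2)-\sfd^p(x_2,y_2)}{pt^{p-1}}.
$$
Now I would use the elementary mean value inequality $|a^p-b^p|\le p\,(\max\{a,b\})^{p-1}|a-b|$ for $a,b\ge0$ and $p>1$, together with $|\sfd(x_1,y_2)-\sfd(x_2,y_2)|\le\sfd(x_1,x_2)$, to obtain
$$
Q_tf(x_1)-Q_tf(x_2)\le\frac{(\max\{\sfd(x_1,y_2),\sfd(x_2,y_2)\})^{p-1}}{t^{p-1}}\,\sfd(x_1,x_2).
$$

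The point where the ``asymptotic'' nature of ${\rm Lip}_a$ enters is the control of $\max\{\sfd(x_1,y_2),\sfd(x_2,y_2)\}$. Here I would invoke the upper semicontinuity of $D^+$ on $X\times[0,\infty)$, established in the previous proposition: given $\eps>0$ there is $r>0$ such that $D^+(z,t)\le D^+(x,t)+\eps$ for all $z\in B_r(x)$. For such $r$ and for $x_1,x_2\in B_r(x)$ we then get $\sfd(x_2,y_2)\le D^+(x_2,t)\le D^+(x,t)+\eps$ and, by the triangle inequality, $\sfd(x_1,y_2)\le\sfd(x_1,x_2)+\sfd(x_2,y_2)\le 2r+D^+(x,t)+\eps$. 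Hence on $B_r(x)$
$$
Q_tf(x_1)-Q_tf(x_2)\le\frac{(D^+(x,t)+\eps+2r)^{p-1}}{t^{p-1}}\,\sfd(x_1,x_2),
$$
and, exchanging the roles of $x_1$ and $x_2$, the same bound holds for $|Q_tf(x_1)-Q_tf(x_2)|$. This means ${\rm Lip}\bigl(Q_tf,B_r(x)\bigr)\le(D^+(x,t)+\eps+2r)^{p-1}/t^{p-1}$.

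Finally I would let $r\downarrow 0$ and then $\eps\downarrow 0$ in the definition \eqref{eq:asymlip} of ${\rm Lip}_a$, which yields
$$
{\rm Lip}_a(Q_tf,x)\le\Bigl[\frac{D^+(x,t)}{t}\Bigr]^{p-1},
$$
as claimed. I do not expect any serious obstacle here: the argument is a soft comparison estimate, and the only non-trivial ingredient is the upper semicontinuity of $D^+$, which is already available; one just has to be a little careful to pass from a bound at the competitor point $y_2$ associated with $x_2$ to a bound uniform over the small ball, which is exactly what the $\limsup$-type semicontinuity of $D^+$ provides.
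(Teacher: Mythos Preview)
Your proof is correct and follows essentially the same route as the paper's: compare $Q_tf$ at two nearby points via the minimizer attached to one of them, control the resulting difference of $p$-th powers of distances by a mean-value-type inequality, and then use the upper semicontinuity of $D^+$ to let the ball shrink. The only cosmetic differences are that the paper bounds $\sfd^p(z,\bar y)-\sfd^p(y,\bar y)$ via the triangle inequality $\sfd(z,\bar y)\le\sfd(z,y)+\sfd(y,\bar y)$ and then $(a+b)^p-b^p\le p(a+b)^{p-1}a$, whereas you use $|a^p-b^p|\le p(\max\{a,b\})^{p-1}|a-b|$ together with the reverse triangle inequality; and you make the $\eps$--$r$ argument for upper semicontinuity explicit, while the paper just states the resulting bound ${\rm Lip}\bigl(Q_tf,B_r(x)\bigr)\le t^{1-p}\bigl(\sup_{y\in B_r(x)}D^+(y,t)\bigr)^{p-1}$ and lets $r\downarrow 0$.
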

In particular ${\rm Lip}(Q_t(f))\leq p{\rm Lip}(f)$.
\begin{proof} Fix $y,\,z\in X$, $t\in (0,\infty)$ and a minimizer $\bar y$ for 
$F(t,y,\cdot)$. Since it holds
\[
\begin{split}
Q_tf(z)-Q_tf(y)&\leq F(t,z,\bar y)-F(t,y,\bar y)=
f(\bar y)+\frac{\sfd^p(z,\bar y)}{pt^{p-1}}-f(\bar y)-\frac{\sfd^p(y,\bar y)}{pt^{p-1}}\\
&\leq
\frac{(\sfd(z,y)+\sfd(y,\bar y))^p}{pt^{p-1}}-\frac{\sfd^p(x_i,y_i)}{pt^{p-1}}
\\
&\leq \frac{\sfd(z,y)}{t^{p-1}}\bigl(\sfd(z,y)+D^+(y,t)\bigr)^{p-1},
\end{split}
\]
so that dividing by $\sfd(z,y)$ and inverting the roles of $y$ and $z$ gives
$$
{\rm Lip}\bigl(Q_tf,B_r(x)\bigr)\leq t^{1-p}\bigl(\sup_{y\in B_r(x)}D^+(y,t)\bigr)^{p-1}.
$$
Letting $r\downarrow 0$ and using the upper
semicontinuity of $D^+$ we get \eqref{eq:hjbss}.

Finally, the bound on the Lipschitz constant of $Q_tf$ follows
directly from \eqref{eq:boundD} and \eqref{eq:hjbss}.
\end{proof}

\begin{theorem}[Subsolution of HJ]\label{thm:subsol}
For every $x\in X$ it holds
\begin{equation}\label{eq:hjbsus}
\frac{\d}{\d t}Q_tf(x)+\frac{1}{q}|\nabla Q_tf|^q(x)\leq 0
\end{equation}
for every $t\in (0,\infty)$, with at most countably many exceptions.
\end{theorem}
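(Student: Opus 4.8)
The plan is to combine the pointwise time-derivative formula from Proposition~\ref{prop:timederivative} with the slope bound from Proposition~\ref{prop:slopesqt}. Fix $x\in X$. By Proposition~\ref{prop:timederivative}, for all $t>0$ outside an at most countable set $N_x$ we have
\begin{equation*}
\frac{\d}{\d t}Q_tf(x)=-\frac1q\Bigl[\frac{D^+(x,t)}{t}\Bigr]^p,
\end{equation*}
where we have used that $D^+(x,t)=D^-(x,t)$ off a countable set as well (Proposition~\ref{prop:dmon}), so we may write $D^+$ in the formula after enlarging $N_x$ by countably many points. On the other hand, Proposition~\ref{prop:slopesqt} gives, for \emph{every} $t>0$,
\begin{equation*}
|\nabla Q_tf|(x)\leq {\rm Lip}_a(Q_tf,x)\leq\Bigl[\frac{D^+(x,t)}{t}\Bigr]^{p-1},
\end{equation*}
using that the slope is dominated by the asymptotic Lipschitz constant. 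Raising this to the power $q$ and recalling $q(p-1)=p$ (since $p$ and $q$ are dual exponents), we obtain $|\nabla Q_tf|^q(x)\leq[D^+(x,t)/t]^{p}$.

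Putting the two displays together, for $t\notin N_x$,
\begin{equation*}
\frac{\d}{\d t}Q_tf(x)+\frac1q|\nabla Q_tf|^q(x)\leq -\frac1q\Bigl[\frac{D^+(x,t)}{t}\Bigr]^p+\frac1q\Bigl[\frac{D^+(x,t)}{t}\Bigr]^p=0,
\end{equation*}
which is exactly \eqref{eq:hjbsus}. Since $N_x$ is at most countable, this proves the claim.

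I do not expect any real obstacle here: the theorem is essentially an assembly of two results already established in the section, and the only arithmetic input is the duality relation $q(p-1)=p$, together with the observation that $D^+$ and $D^-$ agree off a countable set so that the time-derivative identity can be stated purely in terms of $D^+$. The one point to be slightly careful about is the bookkeeping of exceptional sets: the derivative formula holds off a countable set depending on $x$, the coincidence $D^+=D^-$ fails on another countable set, and one simply takes the union. No measure-theoretic subtlety arises because everything is done pointwise in $x$.
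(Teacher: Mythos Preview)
Your proof is correct and follows exactly the paper's approach: the paper's proof simply states that the claim is a direct consequence of Propositions~\ref{prop:timederivative} and~\ref{prop:slopesqt}, and you have written out precisely that combination, including the duality identity $q(p-1)=p$ and the handling of the countable exceptional set.
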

\begin{proof}
The claim is a direct consequence of
Propositions~\ref{prop:timederivative} and
Proposition~\ref{prop:slopesqt}.
\end{proof}

Notice also that \eqref{eq:hjbss} allows to write the HJ sub solution property
in a stronger form using the asymptotic Lipschitz constant ${\rm Lip}_a(Q_tf,\cdot)$ in place of
$|\nabla Q_t f|$, namely for all $x\in X$ it holds
\begin{equation}\label{eq:hjbsusbis}
\frac{\d}{\d t}Q_tf(x)+\frac{1}{q}({\rm Lip}_a(Q_tf,x))^q\leq 0
\end{equation}
for every $t\in (0,\infty)$, with at most countably many exceptions.

We just proved that in an arbitrary metric space the Hopf-Lax
formula produces subsolutions of the Hamilton-Jacobi equations. In
geodesic spaces this result can be improved to get solutions. Since
we shall not need the result, we just state it (the proof is
analogous to \cite[Proposition~3.6]{Ambrosio-Gigli-Savare11}).

\begin{theorem}[Supersolution of HJ]\label{thm:supersol}
Assume that $(X,\sfd)$ is a geodesic space. Then equality holds in
\eqref{eq:hjbss}. In particular, for all $x\in X$ it holds
\[
\frac{\d}{\d t}Q_tf(x)+\frac{1}{q}|\nabla Q_tf|^q(x)=0
\]
for every $t\in(0,\infty)$, with at most countably many exceptions.
\end{theorem}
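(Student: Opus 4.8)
The plan is to prove the reverse inequality in \eqref{eq:hjbss}, i.e. that $\mathrm{Lip}_a(Q_tf,x)\geq [D^-(x,t)/t]^{p-1}$ (equivalently, at a.e.\ $t$ where $D^+=D^-$, that $[D^+(x,t)/t]^{p-1}\leq \mathrm{Lip}_a(Q_tf,x)\leq|\nabla Q_tf|^*(x)$ fails to be strict), and then combine it with Proposition~\ref{prop:slopesqt} and the elementary chain $|\nabla Q_tf|(x)\leq\mathrm{Lip}_a(Q_tf,x)$. Once equality $\mathrm{Lip}_a(Q_tf,x)=[D^+(x,t)/t]^{p-1}=[D^-(x,t)/t]^{p-1}$ is established (for every $x$ and every $t$ outside the countable exceptional set of Proposition~\ref{prop:dmon}), plugging into \eqref{eq:Dini1} of Proposition~\ref{prop:timederivative} gives $\frac{\d}{\d t}Q_tf(x)=-\frac1q[D^\pm(x,t)/t]^p=-\frac1q(\mathrm{Lip}_a(Q_tf,x))^q=-\frac1q|\nabla Q_tf|^q(x)$, which is the assertion.

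The core geometric step uses the geodesic hypothesis. Fix $x$ and $t>0$, let $x_t$ be a minimizer of $F(t,x,\cdot)$ with $\sfd(x,x_t)=D^-(x,t)$, and let $s\mapsto\gamma_s$ be a constant-speed geodesic from $x$ to $x_t$, so $\sfd(x,\gamma_s)=s\,\sfd(x,x_t)$ and $\sfd(\gamma_s,x_t)=(1-s)\sfd(x,x_t)$ for $s\in[0,1]$. Along this geodesic one estimates $Q_tf(x)-Q_tf(\gamma_s)$ from below: since $Q_tf(\gamma_s)\leq F(t,\gamma_s,x_t)=f(x_t)+\frac{(1-s)^p\sfd^p(x,x_t)}{pt^{p-1}}$ while $Q_tf(x)=f(x_t)+\frac{\sfd^p(x,x_t)}{pt^{p-1}}$, we get
\[
Q_tf(x)-Q_tf(\gamma_s)\geq\frac{\sfd^p(x,x_t)}{pt^{p-1}}\bigl(1-(1-s)^p\bigr).
\]
Dividing by $\sfd(x,\gamma_s)=s\,\sfd(x,x_t)$ and letting $s\downarrow0$ yields a lower bound $\frac{\sfd^{p-1}(x,x_t)}{t^{p-1}}=[D^-(x,t)/t]^{p-1}$ on $\limsup_{y\to x}\frac{|Q_tf(y)-Q_tf(x)|}{\sfd(y,x)}=|\nabla Q_tf|(x)$ — one must be a little careful that $\gamma_s\neq x$ so these are legitimate difference quotients, which holds as long as $D^-(x,t)>0$; the degenerate case $D^-(x,t)=0$ makes \eqref{eq:hjbss} trivially an equality anyway. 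This already gives $|\nabla Q_tf|(x)\geq[D^-(x,t)/t]^{p-1}$, and hence, at points where $D^+(x,t)=D^-(x,t)$, combined with \eqref{eq:hjbss} and $|\nabla Q_tf|\leq\mathrm{Lip}_a(Q_tf,\cdot)$, the full chain of equalities.

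The main obstacle is bookkeeping the exceptional sets and making sure the final ``for every $x$, with at most countably many $t$-exceptions'' phrasing survives. The exceptional $t$'s from Proposition~\ref{prop:dmon} (where $D^+(x,\cdot)\neq D^-(x,\cdot)$) depend on $x$, but that is fine since the conclusion is stated per fixed $x$; the Lipschitz-in-$t$ differentiability exceptions from Proposition~\ref{prop:timederivative} are likewise a per-$x$ null (indeed countable, via monotonicity of $D^\pm$) set, so the union remains countable. A secondary point to handle cleanly: the geodesic $\gamma_s$ realizing the lower bound depends on $t$ and on the chosen minimizer, so the argument is genuinely pointwise in $t$ and does not need any joint regularity. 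I would structure the write-up as: (i) reduce to proving equality in \eqref{eq:hjbss}; (ii) dispose of the case $D^-(x,t)=0$; (iii) run the geodesic difference-quotient computation above to get the lower bound on $|\nabla Q_tf|(x)$; (iv) chain the inequalities using Proposition~\ref{prop:slopesqt} and Proposition~\ref{prop:timederivative} and quote \eqref{eq:Dini1}. Since the paper explicitly says the proof is analogous to \cite[Proposition~3.6]{Ambrosio-Gigli-Savare11}, I would keep it brief and refer there for the $p=2$ template.
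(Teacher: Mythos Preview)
Your argument is correct and is exactly the approach the paper has in mind: the paper does not give its own proof but refers to \cite[Proposition~3.6]{Ambrosio-Gigli-Savare11}, and your geodesic difference-quotient computation---using a minimizer $x_t$ with $\sfd(x,x_t)=D^-(x,t)$, a constant-speed geodesic $\gamma_s$ from $x$ to $x_t$, and the comparison $Q_tf(\gamma_s)\leq F(t,\gamma_s,x_t)$ to get $|\nabla Q_tf|(x)\geq [D^-(x,t)/t]^{p-1}$---is precisely that argument adapted from $p=2$ to general $p$. The bookkeeping of the countable exceptional sets (per fixed $x$) via Propositions~\ref{prop:dmon} and~\ref{prop:timederivative} is also handled correctly.
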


\section{Weak gradients}\label{sec:weakgra}

Let $(X,\sfd)$ be a complete and separable metric space and let
$\mm$ be a nonnegative $\sigma$-finite Borel measure in $X$. In this
section we introduce and compare four notions of weak gradients, the
gradient $|\nabla f|_{C,q}$ introduced in \cite{Cheeger00}, the
gradient $|\nabla f|_{S,q}$ introduced in \cite{Koskela-MacManus}
and further studied in \cite{Shanmugalingam00} and the gradients
$\relgradq fq$ and $\weakgradq fq$ whose definition can be obtained
adapting to general power functions the approach of
\cite{Ambrosio-Gigli-Savare11}. We will also see that
\begin{equation}\label{allinequalities}
|\nabla f|_{w,q}\leq|\nabla f|_{S,q}\leq|\nabla f|_{C,q}\leq |\nabla
f|_{*,q}\qquad\text{$\mm$-a.e. in $X$.}
\end{equation}
We shall prove in Section~\ref{sequivalence} that actually all
inequalities are equalities, by proving equality of the two extreme
sides. As in the previous section, we shall denote by $p$ the dual
exponent of $q$.

\subsection{Upper gradients}

Following \cite{Heinonen-Koskela98}, we say that a Borel function
$g$ is an upper gradient of a Borel function $f:X\to\R$ if the
inequality
\begin{equation}\label{eq:uppergradient}
\biggl|\int_{\partial\gamma}f\biggr|\leq\int_\gamma g
\end{equation}
holds for all absolutely continuous curves $\gamma:[0,1]\to X$. Here
$\int_{\partial\gamma} f=f(\gamma_1)-f(\gamma_0)$, while
$\int_\gamma g=\int_0^1g(\gamma_s)|\dot\gamma_s|\,\d s$.

It is well-known and easy to check that the slope is an upper
gradient, for locally Lipschitz functions.

\subsection{Cheeger's gradient $|\nabla f|_{C,q}$}

The following definition is taken from \cite{Cheeger00}, where weak
gradients are defined from upper gradients via a relaxation
procedure.

\begin{definition}[$q$-relaxed upper gradient]\label{def:cheeger00} We say that
$g\in L^q(X,\mm)$ is a $q$-relaxed upper gradient of $f\in
L^q(X,\mm)$ if there exist $\tilde{g}\in L^q(X,\mm)$, functions
$f_n\in L^q(X,\mm)$ and upper gradient $g_n$ of $f_n$ such that:
\begin{itemize}
\item[(a)] $f_n\to f$ in $L^q(X,\mm)$ and $g_n$ weakly converge to
$\tilde{g}$ in $L^q(X,\mm)$;
\item[(b)] $\tilde{g}\leq g$ $\mm$-a.e. in $X$.
\end{itemize}
We say that $g$ is a minimal $q$-relaxed upper gradient of $f$ if
its $L^q(X,\mm)$ norm is minimal among $q$-relaxed upper gradients.
We shall denote by $|\nabla f|_{C,q}$ the minimal $q$-relaxed upper
gradient.
\end{definition}

\subsection{Minimal $q$-relaxed slope $|\nabla f|_{*,q}$}

The second definition of weak gradient we shall consider is a
variant of the previous one and arises by relaxing the integral of
the $q$-th power of the slope of Lipschitz functions. In comparison
with Definition~\ref{def:cheeger00}, we are considering only
Lipschitz approximating functions and we are taking their slopes as
upper gradients. In the spirit of the Sobolev space theory, it
should be considered as an ``$H$ definition'', since an
approximation with Lipschitz functions is involved.

\begin{definition}[Relaxed slope]\label{def:genuppergrad} We say that $g\in L^q(X,\mm)$ is a
$q$-relaxed slope of $f\in L^q(X,\mm)$ if there exist $\tilde{g}\in
L^q(X,\mm)$ and Lipschitz functions $f_n\in L^q(X,\mm)$ such that:
\begin{itemize}
\item[(a)] $f_n\to f$ in $L^q(X,\mm)$ and $|\nabla f_n|$ weakly converge to
$\tilde{g}$ in $L^q(X,\mm)$;
\item[(b)] $\tilde{g}\leq g$ $\mm$-a.e. in $X$.
\end{itemize}
We say that $g$ is the minimal $q$-relaxed slope of $f$ if its
$L^q(X,\mm)$ norm is minimal among $q$-relaxed slopes. We shall
denote by $\relgradq fq$ the minimal $q$-relaxed slope.
\end{definition}

By this definition and the sequential compactness of weak
topologies, any $L^q$ limit of Lipschitz functions $f_n$ with
$\int|\nabla f_n|^q\,\d\mm$ uniformly bounded has a $q$-relaxed
slope. On the other hand, using Mazur's lemma (see
\cite[Lemma~4.3]{Ambrosio-Gigli-Savare11} for details), the
definition of $q$-relaxed slope would be unchanged if the weak
convergence of $|\nabla f_n|$ in (a) were replaced by the condition
$|\nabla f_n|\leq g_n$ and $g_n\to\tilde{g}$ strongly in
$L^q(X,\mm)$. This alternative characterization of $q$-relaxed
slopes is suitable for diagonal arguments and proves, together with
\eqref{eq:subadd}, that the collection of $q$-relaxed slopes is a
closed convex set, possibly empty. Hence, thanks to the uniform
convexity of $L^q(X,\mm)$, the definition of $\relgradq fq$ is well
posed. Also, arguing as in \cite{Ambrosio-Gigli-Savare11} and using once more the uniform
convexity of $L^q(X,\mm)$, it is not difficult to show the following result:

\begin{proposition}\label{prop:easy}
If $f\in L^q(X,\mm)$ has a $q$-relaxed slope then there exist
Lipschitz functions $f_n$ satisfying
\begin{equation}\label{densitylip1}
\lim_{n\to\infty}\int_X|f_n-f|^q\,\d\mm+\int_X\bigl||\nabla
f_n|-|\nabla f|_{*,q}\bigr|^q\,\d\mm=0.
\end{equation}
\end{proposition}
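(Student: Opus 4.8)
The plan is to read off everything from the definition of $\relgradq fq$. Since $f$ has a $q$-relaxed slope, the collection of $q$-relaxed slopes of $f$ is a nonempty, closed and convex subset of $L^q(X,\mm)$, so by uniform convexity $g:=\relgradq fq$ is the well-defined element of minimal norm in it, and there exist Lipschitz functions $f_n\to f$ in $L^q(X,\mm)$ with $|\nabla f_n|\weakto\tilde g$ in $L^q(X,\mm)$ for some $\tilde g\le g$ $\mm$-a.e. Because $\tilde g$ is itself a $q$-relaxed slope of $f$ and has $L^q$ norm no larger than that of $g$ (weak lower semicontinuity of the norm plus $\tilde g\le g$), minimality forces $\tilde g=g$ $\mm$-a.e.; hence we may assume from the start that $f_n\to f$ in $L^q(X,\mm)$ and $|\nabla f_n|\weakto g$ weakly in $L^q(X,\mm)$.

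The core of the argument is to upgrade this weak convergence to the strong convergence required in \eqref{densitylip1} by passing to convex combinations. First I would apply Mazur's lemma: since $|\nabla f_k|\weakto g$, the function $g$ belongs, for every $N$, to the strong closure in $L^q(X,\mm)$ of the convex hull of $\{|\nabla f_k|:k\ge N\}$; choosing $N=N(j)\uparrow\infty$ produces finite convex combinations $g_j:=\sum_{k\ge N(j)}\alpha^{(j)}_k|\nabla f_k|$ with $g_j\to g$ strongly in $L^q(X,\mm)$. Set $h_j:=\sum_{k\ge N(j)}\alpha^{(j)}_k f_k$. Each $h_j$ is Lipschitz; since the coefficients are nonnegative with unit sum, $\|h_j-f\|_{L^q}\le\sup_{k\ge N(j)}\|f_k-f\|_{L^q}\to 0$; and the subadditivity \eqref{eq:subadd} of the slope gives $|\nabla h_j|\le\sum_{k\ge N(j)}\alpha^{(j)}_k|\nabla f_k|=g_j$ $\mm$-a.e., so in particular $|\nabla h_j|\in L^q(X,\mm)$ and the sequence $(|\nabla h_j|)$ is bounded there.

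It remains to show $|\nabla h_j|\to g$ strongly in $L^q(X,\mm)$. By reflexivity of $L^q(X,\mm)$, any weak cluster point $\hat g$ of $(|\nabla h_j|)$ satisfies $\hat g\le g$ $\mm$-a.e. (pass to the limit in $|\nabla h_j|\le g_j$ along the relevant subsequence, testing against nonnegative functions in $L^p$, weakly on the left and strongly on the right); together with $h_j\to f$ in $L^q(X,\mm)$ this makes $\hat g$ a $q$-relaxed slope of $f$, whence $\|\hat g\|_{L^q}\ge\|g\|_{L^q}$ by minimality, while $\|\hat g\|_{L^q}\le\liminf_j\|\,|\nabla h_j|\,\|_{L^q}\le\limsup_j\|\,|\nabla h_j|\,\|_{L^q}\le\lim_j\|g_j\|_{L^q}=\|g\|_{L^q}$. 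Hence $\hat g=g$ $\mm$-a.e., so $g$ is the unique weak cluster point and $|\nabla h_j|\weakto g$ along the full sequence, and the same chain of inequalities gives $\|\,|\nabla h_j|\,\|_{L^q}\to\|g\|_{L^q}$. The uniform convexity of $L^q(X,\mm)$ for $q\in(1,\infty)$ then turns weak convergence together with convergence of the norms into strong convergence, $|\nabla h_j|\to\relgradq fq$ in $L^q(X,\mm)$; relabelling $(h_j)$ as $(f_n)$ yields \eqref{densitylip1}. I expect the only delicate point to be precisely this last weak-to-strong passage: it is what dictates the whole strategy of working with convex combinations — so that \eqref{eq:subadd} can be used to dominate $|\nabla h_j|$ by $g_j$ — and it is where reflexivity, the minimality of $\relgradq fq$, and the uniform convexity of $L^q$ all come into play at once.
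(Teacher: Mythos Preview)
Your argument is correct and follows precisely the route the paper sketches: it invokes Mazur's lemma to pass to convex combinations (as in \cite[Lemma~4.3]{Ambrosio-Gigli-Savare11}), uses the subadditivity \eqref{eq:subadd} to dominate $|\nabla h_j|$ by $g_j$, and then appeals to the minimality of $\relgradq fq$ together with the uniform convexity of $L^q(X,\mm)$ to upgrade weak convergence plus norm convergence to strong convergence. This is exactly the strategy the paper indicates with ``arguing as in \cite{Ambrosio-Gigli-Savare11} and using once more the uniform convexity of $L^q(X,\mm)$''.
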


Since the slope is an upper gradient for Lipschitz functions it
turns out that any $q$-relaxed slope is a $q$-relaxed upper
gradient, hence
\begin{equation}\label{allinequalities1}
|\nabla f|_{C,q}\leq |\nabla f|_{*,q}\qquad\text{$\mm$-a.e. in $X$}
\end{equation}
whenever $f$ has a $q$-relaxed slope.

\begin{remark}\label{rem:whyq}
{\rm Notice that in principle the integrability of $f$ could be
decoupled from the integrability of the gradient, because no global
Poincar\'e inequality can be expected at this level of generality.
Indeed, to increase the symmetry with the next two gradients, one
might even consider the convergence $\mm$-a.e. of the approximating
functions, removing any integrability assumption. We have left the
convergence in $L^q$ because this presentation is more consistent
with the usual presentations of Sobolev spaces, and the definitions
given in \cite{Cheeger00} and \cite{Ambrosio-Gigli-Savare11}. Using
locality and a truncation argument, the definitions can be extended
to more general classes of functions, see
\eqref{eq:extendedrelaxed}.}\fr
\end{remark}

\subsection{$q$-upper gradients and $|\nabla f|_{S,q}$}

Here we recall a weak definition of upper gradient, taken from
\cite{Koskela-MacManus} and further studied in
\cite{Shanmugalingam00} in connection with the theory of Sobolev
spaces, where we allow for exceptions in \eqref{eq:uppergradient}.
Recall that, for $\Gamma\subset AC([0,1],X)$, the $q$-modulus ${\rm
Mod}_q(\Gamma)$ is defined by (see \cite{Fuglede} for a systematic
analysis of this concept)
\begin{equation}
\label{eq:defmod2} {\rm
Mod}_q(\Gamma):=\inf\Big\{\int_X\rho^q\,\d\mm: \ \int_\gamma\rho\geq
1\ \ \forall \gamma\in\Gamma\Big\}.
\end{equation}
We say that $\Gamma$ is ${\rm Mod}_q$-negligible if ${\rm
Mod}_q(\Gamma)=0$. Accordingly, we say that a Borel function
$g:X\to[0,\infty]$ is a $q$-upper gradient of $f$ if there exist a
function $\tilde f$ and a ${\rm Mod}_q$-negligible set $\Gamma$ such
that $\tilde{f}=f$ $\mm$-a.e. in $X$ and
\[
\big|\tilde f(\gamma_0)-\tilde f(\gamma_1)\big|\leq\int_\gamma
g\qquad\forall \gamma\in AC([0,1],X)\setminus\Gamma.
\]
It is not hard to prove that the collection of all  $q$-upper
gradients of $f$ is convex and closed, so that we can call minimal
$q$-upper gradient, and denote by $|\nabla f|_{S,q}$, the element
with minimal $L^q(X,\mm)$ norm. Furthermore, the inequality
\begin{equation}\label{allinequalities2}
|\nabla f|_{S,q}\leq|\nabla f|_{C,q}\qquad\text{$\mm$-a.e. in $X$}
\end{equation}
(namely, the fact that all $q$-relaxed upper gradients are $q$-upper
gradients) follows by a stability property of $q$-upper gradients
very similar to the one stated in Theorem~\ref{thm:stabweak} below
for $q$-weak upper gradients, see
\cite[Lemma~4.11]{Shanmugalingam00}. Finally, an observation due to
Fuglede (see Remark~\ref{rem:Fuglede} below) shows that any
$q$-upper gradient can be strongly approximated in $L^q(X,\mm)$ by
upper gradients. This has been used in \cite{Shanmugalingam00} to
show that the equality $|\nabla f|_{S,q}=|\nabla f|_{C,q}$
$\mm$-a.e. in $X$ holds.

\begin{remark}[Fuglede]\label{rem:Fuglede}{\rm
If ${\rm Mod}_q(\Gamma)=0$ and $\eps>0$, then we can find $\rho\in
L^q(X,\mm)$ with $\|\rho\|_q<\eps$ and $\int_\gamma\rho=\infty$ for
all $\gamma\in\Gamma$. Indeed, if we choose functions $\rho_n\in
L^q(X,\mm)$ with $\|\rho_n\|_q<1/n$ and $\int_\gamma\rho_n\geq 1$
for all $\gamma\in\Gamma$, the function
$$
\rho:=\sum_{n\geq 1} \frac\delta{n}\rho_n
$$
has the required property for $\delta=\delta(\eps)>0$ small
enough.}\fr
\end{remark}

\subsection{$q$-weak upper gradients and $|\nabla f|_{w,q}$}

Recall that the evaluation maps $\rme_t:C([0,1],X)\to X$ are defined
by $\rme_t(\gamma):=\gamma_t$. We also introduce the restriction
maps ${\rm restr}_t^s: C([0,1],X)\to C([0,1],X)$, $0\le t\le s\le
1$, given by
\begin{equation}
{\rm restr}_t^s(\gamma)_r:=\gamma_{(1-r)t+rs},\label{eq:93}
\end{equation}
so that ${\rm restr}_t^s$ ``stretches'' the restriction of the curve
to $[s,t]$ to the whole of $[0,1]$.

Our definition of $q$-weak upper gradient still allows for
exceptions in \eqref{eq:uppergradient}, but with a different notion
of exceptional set, see also Remark~\ref{rem:comparenullsets} below.

\begin{definition}[Test plans and negligible sets of curves]\label{def:testplans}
We say that a probability measure $\ppi\in\prob{C([0,1],X)}$ is a
$p$-\emph{test plan} if $\ppi$ is concentrated on $AC^p([0,1],X)$,
$\iint_0^1|\dot\gamma_t|^p\d t\,\d\ppi<\infty$ and there exists a
constant $C(\ppi)$ such that
\begin{equation}
(\e_t)_\#\ppi \leq C(\ppi)\mm\qquad\forall t\in[0,1].
\label{eq:1}
\end{equation}
A Borel set $A\subset C([0,1],X)$ is said to be
$q$-\emph{negligible} if $\ppi(A)=0$ for any $p$-test plan $\ppi$. A
property which holds for every $\gamma\in C([0,1],X)$, except
possibly a $q$-negligible set, is said to hold for $q$-almost every
curve.
\end{definition}
Observe that, by definition, $C([0,1],X)\setminus AC^p([0,1],X)$ is
$q$-negligible, so the notion starts to be meaningful when we look
at subsets $A$ of $AC^p([0,1],X)$. 
\begin{remark}
  \label{re:easy}
  \upshape
  An easy consequence of condition \eqref{eq:1} is that if two
  $\mm$-measurable functions $f,\,g:X\to\R$ coincide up to a
  $\mm$-negligible set and $\mathcal T$ is an at most countable subset
  of $[0,1]$, then the functions
  $f\circ \gamma$ and $g\circ \gamma$ coincide in $\mathcal T$ 
  for $q$-almost every curve
  $\gamma$. 

  Moreover, choosing an arbitrary $p$-test plan $\ppi$ and applying Fubini's
  Theorem to the product measure $\Leb 1\times \ppi$
  in $(0,1)\times C([0,1];X)$ we also obtain that
  $f\circ\gamma=g\circ\gamma$ $\Leb 1$-a.e.\ in $(0,1)$ for
  $\ppi$-a.e.\ curve $\gamma$; since $\ppi$ is arbitrary, the same
  property holds for $q$-a.e.\ $\gamma$.
\end{remark}

Coupled with the definition of
$q$-negligible set of curves, there are the definitions of $q$-weak upper gradient and
 of functions which are Sobolev along $q$-a.e. curve.

\begin{definition}[$q$-weak upper gradients]
A Borel function
$g:X\to[0,\infty]$ is a $q$-weak upper gradient of $f:X\to \R$ if
\begin{equation}
\label{eq:inweak} \left|\int_{\partial\gamma}f\right|\leq
\int_\gamma g\qquad\text{for $q$-a.e. $\gamma$.}
\end{equation}
\end{definition}

\begin{definition}[Sobolev functions along $q$-a.e. curve]
 A function $f:X\to\R$ is Sobolev along $q$-a.e. curve if for
$q$-a.e. curve $\gamma$ the function $f\circ\gamma$ coincides a.e.
in $[0,1]$ and in $\{0,1\}$ with an absolutely continuous map
$f_\gamma:[0,1]\to\R$.
\end{definition}

By Remark \ref{re:easy} applied to $\mathcal T:=\{0,1\}$, \eqref{eq:inweak} does not depend on
the particular representative of $f$ in the class of $\mm$-measurable
function coinciding with $f$ up to a $\mm$-negligible set. 
The same Remark also shows that  the property of being Sobolev along
$q$-q.e.\ curve $\gamma$ is independent of the representative in the
class of $\mm$-measurable functions coinciding with $f$ $\mm$-a.e.\
in $X$.

In the next remark, using Lemma~\ref{lem:Fibonacci}, we prove that the existence of a $q$-weak
upper gradient $g$ such that $\int_\gamma g<\infty$ for
$q$-a.e.\ $\gamma$ (in particular if $g\in L^q(X,\mm)$) implies Sobolev regularity along $q$-a.e.\ curve. Notice that 
only recently we realized that the validity of this implication, compare with the definitions
given in \cite{Ambrosio-Gigli-Savare11}, only apparently stronger.

\begin{remark}[Equivalence with the axiomatization in \cite{Ambrosio-Gigli-Savare11}]
  \label{re:restr}{\rm Notice that if $\ppi$ is a $p$-test plan, so is $({\rm
restr}_t^s)_\sharp\ppi$. Hence if $g$ is a $q$-weak upper gradient
of $f$ such that $\int_\gamma g<\infty$ for
$q$-a.e.\ $\gamma$, then for every $t<s$ in $[0,1]$ it holds
    \[
    |f(\gamma_s)-f(\gamma_t)|\leq \int_t^s g(\gamma_r)|\dot\gamma_r|\,\d
    r \qquad\text{for $q$-a.e. $\gamma$.}
    \]
    Let $\ppi$ be a $p$-test plan: by Fubini's theorem applied
    to the product measure $\Leb2\times\ppi$ in $(0,1)^2\times
    C([0,1];X)$, it follows that for $\ppi$-a.e. $\gamma$ the function
     $f$ satisfies
    \[
    |f(\gamma_s)-f(\gamma_t)|\leq \Bigl|\int_t^s g(\gamma_r)|\dot\gamma_r|\,\d
    r \Bigr|\qquad\text{for $\Leb{2}$-a.e. $(t,s)\in (0,1)^2$.}
    \]
    An analogous argument shows that 
    \begin{equation}
      \label{eq:2}
      \left\{
    \begin{aligned}
      \textstyle |f(\gamma_s)-f(\gamma_0)|&\textstyle 
      \leq \int_0^s
      g(\gamma_r)|\dot\gamma_r|\,\d r\\
      \textstyle |f(\gamma_1)-f(\gamma_s)|&\textstyle \leq \int_s^1
      g(\gamma_r)|\dot\gamma_r|\,\d r
    \end{aligned}\right.
    \qquad\text{for $\Leb{1}$-a.e. $s\in (0,1)$.}
\end{equation}
 Since $g\circ \gamma|\dot \gamma|\in L^1(0,1)$ for
    $\ppi$-a.e.\ $\gamma$,  
    by Lemma~\ref{lem:Fibonacci} it follows that $f\circ\gamma\in W^{1,1}(0,1)$
    for $\ppi$-a.e. $\gamma$, and
    \begin{equation}\label{eq:pointwisewug}
      \biggl|\frac{\d}{\dt}(f\circ\gamma)\biggr|\leq
      g\circ\gamma|\dot\gamma|\quad\text{a.e. in $(0,1)$, for
        $\ppi$-a.e. $\gamma$.}
    \end{equation}
  Since $\ppi$ is arbitrary, we conclude that $f\circ\gamma\in
  W^{1,1}(0,1)$ for $q$-a.e.\ $\gamma$, and therefore it admits an
  absolutely continuous representative $f_\gamma$; moreover,
  by \eqref{eq:2}, it is immediate to check that $f(\gamma(t))=f_\gamma(t)$ for $t\in \{0,1\}$ and $q$-a.e.\ $\gamma$.
    \fr   }
\end{remark}

Using the same argument given in the previous remark it is
immediate to show that if $f$ is Sobolev along $q$-a.e. curve it holds
\begin{equation}\label{eq:locweak}
\text{$g_i$, $i=1,2$ $q$-weak upper gradients of $f$}\quad\Longrightarrow\quad \text{$\min\{g_1,g_2\}$ $q$-weak upper gradient of $f$.}
\end{equation}
Using this stability property we can recover, again, a distinguished
minimal object.

\begin{definition}[Minimal $q$-weak upper gradient]
  Let $f:X\to\R$ be Sobolev along $q$-a.e. curve.
  The minimal $q$-weak upper gradient $\weakgradq fq$ of $f$
  is the $q$-weak upper gradient characterized, up to
$\mm$-negligible sets, by the property
\begin{equation}\label{eq:defweakgrad}
  \weakgradq fq\leq g\qquad\text{$\mm$-a.e. in $X$, for every $q$-weak upper
    gradient $g$ of $f$.}
\end{equation}
\end{definition}

Uniqueness of the minimal weak upper gradient is obvious. For
existence, since $\mm$ is $\sigma$-finite we can find a Borel and
$\mm$-integrable function $\theta:X\to (0,\infty)$ and $\weakgradq
fq :=\inf_n g_n$, where $g_n$ are $q$-weak upper gradients which
provide a minimizing sequence in
$$
\inf\left\{\int_X \theta\, {\rm tan}^{-1}g\,\d\mm:\ \text{$g$ is a
$q$-weak upper gradient of $f$}\right\}.
$$
We immediately see, thanks to \eqref{eq:locweak}, that we can assume
with no loss of generality that $g_{n+1}\leq g_n$. Hence, by
monotone convergence, the function $\weakgradq fq$ is a $q$-weak
upper gradient of $f$ and $\int_X \theta\,{\rm tan}^{-1}g\,\d\mm$ is
minimal at $g=\weakgrad fq$. This minimality, in conjunction with
\eqref{eq:locweak}, gives \eqref{eq:defweakgrad}.

\begin{remark}\label{rem:comparenullsets}{\rm
Observe that for a Borel set $\Gamma\subset C([0,1],X)$ and a test
plan $\ppi$, integrating on $\Gamma$ w.r.t. $\ppi$ the inequality
$\int_\gamma \rho\geq 1$ and then minimizing over $\rho$, we get
$$
\ppi(\Gamma)\leq (C(\ppi))^{1/q}\bigl({\rm
Mod}_q(\Gamma)\bigr)^{1/q}\biggl(\iint_0^1|\dot\gamma|^p\,\d
s\,\d\ppi(\gamma)\biggr)^{1/p},
$$
which shows that any ${\rm Mod}_q$-negligible set of curves is also
$q$-negligible according to Definition~\ref{def:testplans}. This
immediately gives that any $q$-upper gradient is a $q$-weak upper
gradient, so that
\begin{equation}\label{allinequalities3}
|\nabla f|_{w,q}\leq|\nabla f|_{S,q}\qquad\text{$\mm$-a.e. in $X$.}
\end{equation}
}\fr
\end{remark}

Notice that the combination of \eqref{allinequalities1},
\eqref{allinequalities2} and \eqref{allinequalities3} gives
\eqref{allinequalities}.

\section{Some properties of weak gradients}

In order to close the chain of inequalities in
\eqref{allinequalities} we need some properties of the weak
gradients introduced in the previous section. The following locality
lemma follows by the same arguments in \cite{Cheeger00} or adapting
to the case $q\neq 2$ the proof in
\cite[Lemma~4.4]{Ambrosio-Gigli-Savare11}.

\begin{lemma}[Pointwise minimality of $\relgradq fq$]\label{le:local}
Let $g_1,\,g_2$ be two $q$-relaxed slopes of $f$. Then
$\min\{g_1,g_2\}$ is a $q$-relaxed slope as well. In particular, not
only the $L^q$ norm of $\relgradq fq$ is minimal, but also
$\relgradq fq\leq g$ $\mm$-a.e. in $X$ for any relaxed slope $g$ of
$f$.
\end{lemma}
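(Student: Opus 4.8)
The plan is to deduce the statement from the fact that the collection $\mathscr S$ of $q$-relaxed slopes of $f$ is stable under pointwise minimum (this is the core of the ``same arguments as in \cite{Cheeger00}'' mentioned in the statement). Granting the stability, the ``in particular'' assertion is immediate: if $g\in\mathscr S$ then $g\wedge\relgradq fq\in\mathscr S$ and $\|g\wedge\relgradq fq\|_{L^q(X,\mm)}\le\|\relgradq fq\|_{L^q(X,\mm)}$, so by minimality of the $L^q$-norm of $\relgradq fq$ (and uniform convexity of $L^q$, which also gives uniqueness of the minimizer) the two norms coincide; since $0\le g\wedge\relgradq fq\le\relgradq fq$ $\mm$-a.e., this forces $g\wedge\relgradq fq=\relgradq fq$ $\mm$-a.e., i.e. $\relgradq fq\le g$ $\mm$-a.e. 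So the whole content reduces to: if $g_1,g_2$ are $q$-relaxed slopes of $f$, then so is $\min\{g_1,g_2\}$.

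For this I would first pass to the alternative description of relaxed slopes recalled after Definition~\ref{def:genuppergrad} (the one obtained via Mazur's lemma): there are Lipschitz $f_n,h_n\to f$ in $L^q(X,\mm)$ and $g_n^1,g_n^2\in L^q(X,\mm)$ with $|\nabla f_n|\le g_n^1\to\tilde g_1\le g_1$ and $|\nabla h_n|\le g_n^2\to\tilde g_2\le g_2$, the convergences being strong in $L^q(X,\mm)$. Replacing $f_n,h_n$ by their truncations $\max\{-M_n,\min\{\,\cdot\,,M_n\}\}$ with $M_n\uparrow\infty$ — an operation that does not enlarge slopes and preserves the $L^q$-convergence to $f$ — I may assume $f_n,h_n$ are bounded Lipschitz. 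Then, for any Lipschitz $\eta\colon X\to[0,1]$, set $\phi_n:=\eta f_n+(1-\eta)h_n$, a bounded Lipschitz function with $\phi_n\to f$ in $L^q(X,\mm)$; writing $\phi_n=h_n+\eta(f_n-h_n)$ and estimating directly from the definition of the slope (in the spirit of \eqref{eq:subadd}--\eqref{eq:leibn}) one gets, $\mm$-a.e.,
\[
|\nabla\phi_n|\le \eta\,|\nabla f_n|+(1-\eta)\,|\nabla h_n|+|f_n-h_n|\,|\nabla\eta|\le \eta\,g_n^1+(1-\eta)\,g_n^2+\Lip(\eta)\,|f_n-h_n|.
\]
The right-hand side converges strongly in $L^q(X,\mm)$, as $n\to\infty$, to $\eta\tilde g_1+(1-\eta)\tilde g_2\le \eta g_1+(1-\eta)g_2$; hence $\eta g_1+(1-\eta)g_2$ is a $q$-relaxed slope of $f$, for every Lipschitz $\eta\colon X\to[0,1]$.

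To conclude I would let $\eta$ approximate the indicator of $B:=\{g_1<g_2\}$. Note $\eta g_1+(1-\eta)g_2=g_2+\eta(g_1-g_2)$ and $\min\{g_1,g_2\}=g_2+\chi_B(g_1-g_2)$. Since $g_1,g_2\in L^q(X,\mm)$, the Borel measure $\mu:=|g_1-g_2|^q\mm$ is finite, and bounded Lipschitz functions are dense in $L^q(X,\mu)$ on the separable metric space $X$; truncating to $[0,1]$ we obtain Lipschitz $\eta_k\colon X\to[0,1]$ with $\eta_k\to\chi_B$ in $L^q(X,\mu)$, which is precisely $\eta_k g_1+(1-\eta_k)g_2\to\min\{g_1,g_2\}$ in $L^q(X,\mm)$. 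As the set of $q$-relaxed slopes is closed in $L^q(X,\mm)$, $\min\{g_1,g_2\}$ is a $q$-relaxed slope, which is the stability property.

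The one genuinely non-formal step is the gluing in the second paragraph: naive patchings of $f_n$ and $h_n$ (minima, maxima, medians) join the two functions where they are close in value, which is unrelated to the set $\{g_1<g_2\}$ that we must exploit; the convex-combination device works because, for a \emph{fixed} cutoff $\eta$, the interface error $|f_n-h_n|\,|\nabla\eta|$ is dominated by $\Lip(\eta)\,|f_n-h_n|\to 0$ in $L^q(X,\mm)$, so it disappears in the limit. The remaining ingredients — the slope inequality displayed above, keeping the approximants globally Lipschitz (hence the preliminary truncation), and the density and closedness facts used — are routine, and the structural properties of $\mathscr S$ (convexity, closedness, existence and uniqueness of the minimal-norm element, and nonemptiness, which holds by hypothesis) have already been recorded before the statement.
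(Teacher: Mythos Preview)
Your proof is correct and follows exactly the approach the paper points to (the convex-combination/cutoff argument of \cite{Cheeger00} and \cite[Lemma~4.4]{Ambrosio-Gigli-Savare11}): glue the two approximating sequences via a fixed Lipschitz $\eta$, observe the interface term $\Lip(\eta)|f_n-h_n|$ vanishes in $L^q$, and then let $\eta$ approximate $\chi_{\{g_1<g_2\}}$ using closedness of the set of relaxed slopes. The preliminary truncation to bounded approximants (so that $\eta f_n+(1-\eta)h_n$ is Lipschitz) and the density argument via the auxiliary measure $|g_1-g_2|^q\mm$ are the right technical devices, and the deduction of pointwise minimality from the lattice property is standard.
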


The previous pointwise minimality property immediately yields
\begin{equation}
\label{eq:facile} \relgradq  fq\leq |\nabla f|\qquad\text{$\mm$-a.e.
in $X$}
\end{equation}
for any Lipschitz function $f:X\to\R$.

Also the proof of locality and chain rule is quite standard, see
\cite{Cheeger00} and \cite[Proposition~4.8]{Ambrosio-Gigli-Savare11}
for the case $q=2$ (the same proof works in the general case).

\begin{proposition}[Locality and chain rule]\label{prop:chain}
If $f\in L^q(X,\mm)$ has a $q$-relaxed slope, the following
properties hold.
\begin{itemize}
\item[(a)] $\relgradq hq=\relgradq fq$ $\mm$-a.e. in $\{h=f\}$
whenever $f$ has a $q$-relaxed slope.
\item[(b)] $\relgradq {\phi(f)}q\leq |\phi'(f)|\relgradq fq$ for any $C^1$ and Lipschitz function
$\phi$ on an interval containing the image of $f$. Equality holds if
$\phi$ is nondecreasing.
\end{itemize}
\end{proposition}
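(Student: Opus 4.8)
The plan is to reduce both statements to the alternative characterization of $q$-relaxed slopes mentioned after Definition~\ref{def:genuppergrad}: a function $g$ is a $q$-relaxed slope of $h$ if and only if there are Lipschitz functions $h_n\to h$ in $L^q(X,\mm)$ and $g_n\to g$ strongly in $L^q(X,\mm)$ with $|\nabla h_n|\le g_n$. For part~(b), given Lipschitz $f_n\to f$ in $L^q$ with $|\nabla f_n|\to\relgradq fq$ strongly (Proposition~\ref{prop:easy}), I would set $h_n:=\phi(f_n)$, which is Lipschitz since $\phi$ is $C^1$ and Lipschitz on an interval containing the images of $f$ and, after a truncation if necessary, of the $f_n$. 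The chain rule for the slope of Lipschitz functions — an elementary pointwise estimate, $|\nabla(\phi\circ f_n)|\le|\phi'(f_n)|\,|\nabla f_n|$, which follows directly from the definition of slope and the mean value theorem — gives the candidate bound $g_n:=|\phi'(f_n)|\,|\nabla f_n|$. One then checks $h_n\to\phi(f)$ in $L^q$ (using the global Lipschitz bound on $\phi$) and $g_n\to|\phi'(f)|\,\relgradq fq$ in $L^q$; the latter convergence is the one routine technical point, handled by splitting $g_n-|\phi'(f)|\relgradq fq=(|\phi'(f_n)|-|\phi'(f)|)|\nabla f_n|+|\phi'(f)|(|\nabla f_n|-\relgradq fq)$, using the strong $L^q$ convergence of $|\nabla f_n|$ for the second term and the uniform boundedness of $\phi'$ together with dominated convergence (after passing to a subsequence with $f_n\to f$ and $|\nabla f_n|\to\relgradq fq$ a.e., and $|\nabla f_n|$ dominated in $L^q$ via a strongly convergent majorant) for the first. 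This yields $\relgradq{\phi(f)}q\le|\phi'(f)|\relgradq fq$.

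For the equality when $\phi$ is nondecreasing, I would apply the inequality just proved to the pair $\phi$, $f$ and to the inverse-type composition: if $\phi$ is $C^1$, Lipschitz and nondecreasing, approximate the identity on the relevant interval by compositions. More precisely, one cannot in general invert $\phi$, but one can argue as follows: apply part~(b) with $\phi$ replaced by a $C^1$ Lipschitz function $\psi_\eps$ that is nondecreasing and agrees with a genuine inverse of $\phi+\eps\,\mathrm{id}$; since $\phi+\eps\,\mathrm{id}$ is strictly increasing with Lipschitz inverse on compacts, part~(b) applied to $\psi_\eps$ and $(\phi+\eps\,\mathrm{id})(f)$ gives $\relgradq fq\le|\psi_\eps'|\,\relgradq{(\phi+\eps\,\mathrm{id})(f)}q$, and combining with the forward inequality for $\phi+\eps\,\mathrm{id}$ forces equality there; letting $\eps\downarrow 0$ and using \eqref{eq:subadd}-type stability recovers equality for $\phi$ itself. (Alternatively, and more cleanly, one notes this is exactly the argument of \cite[Proposition~4.8]{Ambrosio-Gigli-Savare11} in the case $q=2$, which transfers verbatim.)

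For part~(a), the locality statement, the standard approach is: it suffices to show $\relgradq hq\le\relgradq fq$ $\mm$-a.e.\ on $\{h=f\}$, the reverse following by symmetry. Using a truncation reduce to $f,h$ bounded. Fix $\eps>0$ and consider $\phi_\eps\in C^1$ Lipschitz with $\phi_\eps(0)=0$ and $\phi_\eps'$ supported in $\{|t|>\eps\}$; applying part~(b) to $\phi_\eps(h-f)$ and letting $\eps\downarrow 0$ one shows $\relgradq{(h-f)}q=0$ $\mm$-a.e.\ on $\{h=f\}$ (the slope of the composition vanishes where $h-f$ is near $0$), hence $h-f$ has a $q$-relaxed slope vanishing there, and then subadditivity \eqref{eq:subadd} of relaxed slopes (Lemma~\ref{le:local}) gives $\relgradq hq\le\relgradq fq+\relgradq{(h-f)}q=\relgradq fq$ $\mm$-a.e.\ on $\{h=f\}$. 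I expect the main obstacle to be the bookkeeping in part~(b) around truncation — ensuring the Lipschitz approximants $f_n$ can be taken uniformly bounded so that $\phi$ is genuinely applied on a fixed compact interval, which forces passing through the locality property itself, making (a) and (b) somewhat intertwined; the cleanest order is to prove a truncation lemma first, then (a), then (b), exactly as in the $q=2$ reference.
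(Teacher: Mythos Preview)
The paper does not give its own proof of this proposition; it simply refers to \cite{Cheeger00} and \cite[Proposition~4.8]{Ambrosio-Gigli-Savare11}, noting that the $q=2$ argument carries over verbatim. Your sketch is precisely that argument, so the approaches coincide.

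Two small points of bookkeeping. First, your citation of Lemma~\ref{le:local} for ``subadditivity of relaxed slopes'' is misplaced: that lemma gives pointwise minimality, not subadditivity. The inequality $\relgradq{(f+g)}q\le\relgradq fq+\relgradq gq$ you need follows directly from \eqref{eq:subadd} and the alternative (strong-convergence) characterization of relaxed slopes. Second, your $\eps$-perturbation argument for the equality case in~(b) is correct and self-contained---once you combine $\relgradq fq\le|\psi_\eps'(g)|\,\relgradq gq$ with $\relgradq gq\le(\phi'(f)+\eps)\relgradq fq$ (where $g=(\phi+\eps\,\mathrm{id})(f)$), the chain forces equality in the second inequality, and then subadditivity $\relgradq gq\le\relgradq{\phi(f)}q+\eps\relgradq fq$ yields $\phi'(f)\relgradq fq\le\relgradq{\phi(f)}q$ after cancellation---so it does \emph{not} require~(a). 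This means your concluding remark about the ``cleanest order'' being truncation--(a)--(b) is unnecessary: the order (b)-inequality, then~(a) via $\phi_\eps(h-f)$, then (b)-equality (or, as you in fact did, (b)-equality before~(a)) works without circularity once $\phi$ is extended to a $C^1$ Lipschitz function on all of $\R$, which is always possible.
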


Next we consider the stability of $q$-weak upper gradients (as we
said, similar properties hold for $q$-upper gradients, see
\cite[Lemma~4.11]{Shanmugalingam00} but we shall not need them).

\begin{theorem}[Stability w.r.t. $\mm$-a.e. convergence]\label{thm:stabweak}
Assume that $f_n$ are $\mm$-measurable, Sobolev along $q$-a.e. curve
and that $g_n\in L^q(X,\mm)$ are $q$-weak upper gradients of $f_n$. Assume
furthermore that $f_n(x)\to f(x)\in\R$ for $\mm$-a.e. $x\in X$ and
that $(g_n)$ weakly converges to $g$ in $L^q(X,\mm)$. Then $g$ is a
$q$-weak upper gradient of $f$.
\end{theorem}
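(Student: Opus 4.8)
The plan is to verify the defining inequality of a $q$-weak upper gradient by testing it against an arbitrary $p$-test plan $\ppi$, and then to upgrade the resulting \emph{averaged} inequality to a $\ppi$-a.e.\ one by running the argument simultaneously on all Borel subsets of $C([0,1],X)$.

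First I would fix a $p$-test plan $\ppi$ and introduce the nonnegative ``barycenter'' measure $\mu_\ppi$ on $X$ characterized by $\int_X\phi\,\d\mu_\ppi=\iint_0^1\phi(\gamma_t)|\dot\gamma_t|\,\d t\,\d\ppi(\gamma)$ for every Borel $\phi\ge0$. By H\"older's inequality in the variables $(t,\gamma)$ together with the test-plan bound \eqref{eq:1}, one gets $\int_X\phi\,\d\mu_\ppi\le C(\ppi)^{1/q}\bigl(\iint_0^1|\dot\gamma_t|^p\,\d t\,\d\ppi\bigr)^{1/p}\,\|\phi\|_{L^q(X,\mm)}$, so $\mu_\ppi$ is absolutely continuous with respect to $\mm$ with a density $\rho_\ppi\in L^p(X,\mm)$; by Fubini--Tonelli, $\int\bigl(\int_\gamma h\bigr)\,\d\ppi=\int_X h\,\rho_\ppi\,\d\mm$ for every Borel $h\ge0$. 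The same construction, applied to the sub-probability measure $\ppi|_G$ for any Borel $G\subset C([0,1],X)$, produces a density $\rho_{\ppi,G}\in L^p(X,\mm)$ with $\rho_{\ppi,G}\le\rho_\ppi$, and $\int_G\bigl(\int_\gamma h\bigr)\,\d\ppi=\int_X h\,\rho_{\ppi,G}\,\d\mm$.

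Next, for a Borel set $G$ with $\ppi(G)>0$ the normalised restriction $\ppi(G)^{-1}\ppi|_G$ is again a $p$-test plan, so the $q$-weak upper gradient property of $g_n$ for $f_n$ gives $|f_n(\gamma_1)-f_n(\gamma_0)|\le\int_\gamma g_n$ for $\ppi$-a.e.\ $\gamma\in G$; integrating over $G$ and using the identity above,
$$
\int_G|f_n(\gamma_1)-f_n(\gamma_0)|\,\d\ppi\ \le\ \int_X g_n\,\rho_{\ppi,G}\,\d\mm .
$$
Now I would let $n\to\infty$. Since $g_n\weakto g$ in $L^q(X,\mm)$ and $\rho_{\ppi,G}\in L^p(X,\mm)$, the right-hand side converges to $\int_X g\,\rho_{\ppi,G}\,\d\mm=\int_G\bigl(\int_\gamma g\bigr)\,\d\ppi$. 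For the left-hand side I would use that $(\e_0)_\#\ppi$ and $(\e_1)_\#\ppi$ are both $\ll\mm$, so that $f_n\to f$ $\mm$-a.e.\ yields $f_n(\gamma_i)\to f(\gamma_i)$ for $i\in\{0,1\}$ and $\ppi$-a.e.\ $\gamma$; Fatou's lemma then gives $\int_G|f(\gamma_1)-f(\gamma_0)|\,\d\ppi\le\int_G\bigl(\int_\gamma g\bigr)\,\d\ppi$. Hence $\int_G\bigl(\int_\gamma g-|f(\gamma_1)-f(\gamma_0)|\bigr)\,\d\ppi\ge0$ for every Borel $G$ (trivially if $\ppi(G)=0$), and since both $\gamma\mapsto\int_\gamma g$ and $\gamma\mapsto|f(\gamma_1)-f(\gamma_0)|$ are $\ppi$-integrable, the integrand is nonnegative $\ppi$-a.e. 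As $\ppi$ was an arbitrary $p$-test plan, $\bigl|\int_{\partial\gamma}f\bigr|\le\int_\gamma g$ for $q$-a.e.\ $\gamma$, i.e.\ $g$ is a $q$-weak upper gradient of $f$.

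The step I expect to be the main obstacle is precisely the passage from the averaged inequality to the $\ppi$-a.e.\ statement, which is why the argument has to be carried out simultaneously for all restrictions $\ppi|_G$; the measurability and $\ppi$-integrability of $\gamma\mapsto\int_\gamma g$ (needed to legitimise the barycenter-density identity and to invoke Fatou) is the only other point requiring a little care, and follows from the bound $\int\bigl(\int_\gamma g\bigr)\d\ppi\le\|g\|_{L^q}\|\rho_\ppi\|_{L^p}<\infty$. As an alternative to introducing $\rho_\ppi$, one could first apply Mazur's lemma to replace $(g_n)$ by convex combinations $\tilde g_n\to g$ strongly in $L^q(X,\mm)$ --- these are $q$-weak upper gradients of the corresponding convex combinations $\tilde f_n$ of the $f_n$, by the evident subadditivity of weak upper gradients, and $\tilde f_n\to f$ $\mm$-a.e.\ --- and then pass to the limit directly using H\"older's inequality; the restriction trick is still needed to conclude.
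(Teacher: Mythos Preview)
Your argument is correct and takes a genuinely different route from the paper's proof. The paper first applies Mazur's lemma to replace the weakly convergent $(g_n)$ by convex combinations $h_n\to g$ \emph{strongly} in $L^q$, with corresponding convex combinations $\tilde f_n\to f$ $\mm$-a.e.; it then uses the estimate $\int\bigl(\int_\gamma\varphi\bigr)\,\d\ppi\le \bar C\|\varphi\|_q$ to extract a subsequence along which $\int_\gamma|h_{n(k)}-g|\to 0$ for $\ppi$-a.e.\ $\gamma$, and passes to the limit \emph{curve by curve} via equi-absolute continuity of the functions $(\tilde f_{n(k)})_\gamma$. This yields directly, for $\ppi$-a.e.\ $\gamma$, an absolutely continuous representative $f_\gamma$ of $f\circ\gamma$ with $|f_\gamma'|\le g\circ\gamma\,|\dot\gamma|$, so one obtains simultaneously that $f$ is Sobolev along $q$-a.e.\ curve and that $g$ is a $q$-weak upper gradient.

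Your approach avoids Mazur's lemma, subsequence extraction, and the equi-absolute continuity argument entirely: you pass to the limit only in the \emph{integrated} inequality, exploiting the barycenter density $\rho_{\ppi,G}\in L^p(X,\mm)$ to test the weak convergence $g_n\weakto g$, and then recover the $\ppi$-a.e.\ statement by letting $G$ run over all Borel sets. This is shorter and more direct for establishing the bare inequality $\bigl|\int_{\partial\gamma}f\bigr|\le\int_\gamma g$. What it does not give directly is the Sobolev-along-curves property of $f$; but, as the paper itself notes in Remark~\ref{re:restr}, once a $q$-weak upper gradient $g\in L^q(X,\mm)$ exists, that property follows a posteriori. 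So within the framework of the paper your proof is complete; the paper's longer argument has the advantage of being self-contained on this point.
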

\begin{proof}
Fix a $p$-test plan $\ppi$ and $\theta\in L^1(X,\mm)$ strictly
positive (its existence is ensured by the $\sigma$-finiteness
assumption on $\mm$). By Mazur's theorem we can find convex
combinations
$$
h_n:=\sum_{i=N_h+1}^{N_{h+1}}\alpha_ig_i\qquad\text{with
$\alpha_i\geq 0$, $\sum_{i=N_h+1}^{N_{h+1}}\alpha_i=1$,
$N_h\to\infty$}
$$
converging strongly to $g$ in $L^q(X,\mm)$. Denoting by $\tilde f_n$
the corresponding convex combinations of $f_n$, $h_n$ are weak upper
gradients of $\tilde f_n$ and still $\tilde f_n\to f$ $\mm$-a.e. in
$X$.

Since for every nonnegative Borel function $\varphi:X\to [0,\infty]$
it holds (with $C=C(\ppi)$)
\begin{align}
  \notag\int\Big(\int_{\gamma}\varphi\Big)\,\d\ppi&=
  \int\Big(\int_0^1 \varphi(\gamma_t)|\dot
  \gamma_t|\,\d t\Big)\,\d\ppi
  \le
  \int\Big(\int_0^1\varphi^q(\gamma_t)\,\d
  t\Big)^{1/q}
  \Big(\int_0^1 |\dot
  \gamma_t|^p\,\d t\Big)^{1/p}\,\d\ppi
  \\&\notag
  \le
  \Big(\int_0^1 \int\varphi^q\,\d(\rme_t)_\sharp\ppi\,\d
  t\Big)^{1/q}
  \Big(\iint_0^1|\dot\gamma_t|^p\,\d t\,\d\ppi\Big)^{1/p}
\\ &\le  \Big(C\int\varphi^q\,\d\mm\Big)^{1/q}  \Big(\iint_0^1|\dot\gamma_t|^p\,\d t\,\d\ppi\Big)^{1/p},
\label{eq:21}
 \end{align}
we obtain, for $\bar C:=C^{1/q}\Big(\iint_0^1|\dot\gamma_t|^p\,\d
t\,\d\ppi\Big)^{1/p}$
\begin{align*}
\int&\biggl(\int_{\gamma}|h_n-g|+\min\{|\tilde{f}_n-f|,\theta\}\biggr)\,\d\ppi\leq
\bar C\Big(\|h_n-g\|_q+ \|\min\{|\tilde{f}_n-f|,\theta\}\|_q\Big)
\to 0.
\end{align*}
By a diagonal argument we can find a subsequence $n(k)$  such that
$$\int_\gamma|h_{n(k)}-g|+\min\{|\tilde{f}_{n(k)}-f|,\theta\}\to 0$$ as
$k\to\infty$ for $\ppi$-a.e. $\gamma$. Since $\tilde{f}_n$ converge
$\mm$-a.e. to $f$ and the marginals of $\ppi$ are absolutely
continuous w.r.t. $\mm$ we have also that for $\ppi$-a.e. $\gamma$
it holds $\tilde{f}_n(\gamma_0)\to f(\gamma_0)$ and
$\tilde{f}_n(\gamma_1)\to f(\gamma_1)$.

If we fix a curve $\gamma$ satisfying these convergence properties,
since $(\tilde{f}_{n(k)})_\gamma$ are equi-absolutely continuous
(being their derivatives bounded by
$h_{n(k)}\circ\gamma|\dot\gamma|$) and a further subsequence of
$\tilde{f}_{n(k)}$ converges a.e. in $[0,1]$ and in $\{0,1\}$ to
$f(\gamma_s)$, we can pass to the limit to obtain an absolutely
continuous function $f_\gamma$ equal to $f(\gamma_s)$ a.e. in
$[0,1]$ and in $\{0,1\}$ with derivative bounded by
$g(\gamma_s)|\dot\gamma_s|$. Since $\ppi$ is arbitrary we conclude
that $f$ is Sobolev along $q$-a.e. curve and that $h$ is a weak
upper gradient of $f$.
\end{proof}

It is natural to ask whether $r$-upper gradients really depend on $r$ or not.
A natural conjecture is the following: let $r\in (1,\infty)$ and $f:X\to\R$ Borel. Assume that $\mm$ is a
finite measure and that $f$ has a $r$-upper gradient in
$L^r(X,\mm)$. Then, for all $q\in (1,r]$, $f$ has a $q$-upper
gradient and $|\nabla f|_{S,q}=|\nabla f|_{S,r}$ $\mm$-a.e. in $X$.

Notice however that the ``converse'' implication, namely
\begin{equation}\label{eq:koskela}
\qquad\text{$f$ has a $q$-upper gradient in
$L^r(X,\mm)$}\,\,\Rightarrow\,\,\text{$f$ has a $r$-upper gradient
in $L^r(X,\mm)$}
\end{equation}
for $1<q<r<\infty$ does not hold in general. A counterexample has
been shown to us by P.Koskela: consider the set $X$ equal to the
union of the first and third quadrant in $\R^2$, and take as
function $f$ the characteristic function of the first quadrant.
Since the collection of all curves passing from the first to the
third quadrant is ${\rm Mod}_2$-negligible (just take, for $\alpha\in (0,1)$,
the family of curves $\rho_\alpha(x)=\alpha|x|^{\alpha-1}$, and let $\alpha\downarrow 0$)
it follows that $f$ has a $2$-upper gradient equal to $0$. On the other hand, $f$ is discontinuous along the pencil of
curves $\gamma_\theta(t):=(2t-1)(\cos\theta,\sin\theta)$ indexed by
$\theta\in [0,\pi/2]$, and since this family of curves is not ${\rm
Mod}_r$-negligible for $r>2$ it follows that \eqref{eq:koskela} fails for $f$.
In order to show that the family of curves is not ${\rm Mod}_r$-negligible for $r>2$,
suffices to notice that $\int_{\gamma_\theta}g\geq 1$ implies
$$
\frac{1}{2}\leq\biggl(\int_0^1 g^r(\gamma_\theta(t))|2t-1|\,\d t\biggr)^{1/r}
\biggl(\int_0^1|2t-1|^{-r'/r}\,\dt\biggr)^{1/r'}.
$$
Since $r>2$ implies $r'/r<1$, integrating both sides in $[0,\pi/2]$ gives 
a lower bound on the $L^r$ norm of $g$ with a positive constant $c(r)$.

In the presence of doubling and a $(1,q)$-Poincar\'e inequality,
\eqref{eq:koskela} holds, following the Lipschitz approximation
argument in Theorem~4.14 and Theorem~4.24 of \cite{Cheeger00} (we
shall not need this fact in the sequel).

\section{Cheeger's functional and its gradient flow}

In this section we assume that $(X,\sfd)$ is complete and separable
and that $\mm$ is a finite Borel measure. As in the previous
sections, $q\in (1,\infty)$ and $p$ is the dual exponent. In order
to apply the theory of gradient flows of convex functionals in
Hilbert spaces, when $q>2$ we need to extend $\relgradq fq$ also to
functions in $L^2(X,\mm)$ (because Definition~\ref{def:genuppergrad}
was given for $L^q(X,\mm)$ functions). To this aim, we denote
$f_N:=\max\{-N,\min\{f,N\}\}$ and set
\begin{equation}\label{eq:mathcalC}
\mathcal C:=\left\{f:X\to\R:\ \text{$f_N$ has a $q$-relaxed slope
for all $N\in\N$}\right\}.
\end{equation}
Accordingly, for all $f\in\mathcal C$ we set
\begin{equation}\label{eq:extendedrelaxed}
\relgradq fq:=\relgradq {f_N}q\qquad\text{$\mm$-a.e. in $\{|f|<N\}$}
\end{equation}
for all $N\in\N$. We can use the locality property in
Proposition~\ref{prop:chain}(a) to show that this definition is well
posed, up to $\mm$-negligible sets, and consistent with the previous
one. Furthermore, locality and chain rules still apply, so we shall
not use a distinguished notation for the new gradient.

Although we work with a stronger definition of weak gradient,
compared to $|\nabla f|_{C,q}$, we call Cheeger's $q$-functional the
energy on $L^2(X,\mm)$ defined by
\begin{equation}\label{def:Cheeger}
\C_q(f):=\frac{1}{q}\int_X |\nabla f|_{*,q}^q \,\d\mm,
\end{equation}
set to $+\infty$ if $f\in L^2(X,\mm)\setminus\mathcal C$.

\begin{theorem} \label{thm:cheeger} Cheeger's $q$-functional
$\C_q$ is convex and lower semicontinuous in $L^2(X,\mm)$.
\end{theorem}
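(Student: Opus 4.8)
The plan is to read off both properties from facts already established for relaxed slopes: for a fixed function the collection of its $q$-relaxed slopes is a closed convex subset of $L^q(X,\mm)$, stable under $L^q$-convergence of the base functions (the Mazur-type ``diagonal argument'' characterisation recorded just before Proposition~\ref{prop:easy}); $\relgradq fq$ is the pointwise minimal element of this collection (Lemma~\ref{le:local}); the locality and chain rule of Proposition~\ref{prop:chain} control the relaxed slopes of truncations $f_N$ and are consistent with the extension \eqref{eq:extendedrelaxed}; and, since $\mm$ is finite, $\mm$-a.e.\ convergence of a uniformly bounded sequence implies convergence in $L^q(X,\mm)$. Convexity will then follow the ``subadditivity plus convexity of $t\mapsto t^q$'' scheme and lower semicontinuity the ``Mazur plus lower semicontinuity of the norm'' scheme, the only real work being the bookkeeping needed to stay inside the class $\mathcal C$ of \eqref{eq:mathcalC}.

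\emph{Convexity.} Fix $f^0,f^1\in\mathcal C$ and $\lambda\in[0,1]$ (if $\C_q(f^0)$ or $\C_q(f^1)$ equals $+\infty$ there is nothing to prove). I would first show $f^\lambda:=(1-\lambda)f^0+\lambda f^1\in\mathcal C$ with
$$
\relgradq{f^\lambda}q\ \le\ (1-\lambda)\relgradq{f^0}q+\lambda\relgradq{f^1}q\ =:\ r\qquad\text{$\mm$-a.e. in }X,
$$
where $r\in L^q(X,\mm)$. Fix $N$. For each $M$, combining Lipschitz approximants of $f^0_M$ and $f^1_M$ and using subadditivity \eqref{eq:subadd} of the slope shows that $(1-\lambda)f^0_M+\lambda f^1_M$ has a $q$-relaxed slope $\le (1-\lambda)\relgradq{f^0_M}q+\lambda\relgradq{f^1_M}q$, and hence so does its truncation at level $N$, by the locality/chain rule of Proposition~\ref{prop:chain} (applied, say, to smooth $1$-Lipschitz approximations of $z\mapsto\max\{-N,\min\{z,N\}\}$); by \eqref{eq:extendedrelaxed} this relaxed slope is $\le r$. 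Letting $M\to\infty$, these truncated combinations converge in $L^q(X,\mm)$ to $f^\lambda_N$ (finiteness of $\mm$) with relaxed slopes uniformly $\le r$, so the closedness/stability of $q$-relaxed slopes gives that $f^\lambda_N$ has a $q$-relaxed slope $\le r$. Thus $f^\lambda\in\mathcal C$ and, by minimality, $\relgradq{f^\lambda}q\le r$ on $\{|f^\lambda|<N\}$; letting $N\to\infty$ gives the displayed bound. Since $z\mapsto z^q$ is convex and nondecreasing on $[0,\infty)$ it follows that $\relgradq{f^\lambda}q^q\le(1-\lambda)\relgradq{f^0}q^q+\lambda\relgradq{f^1}q^q$ $\mm$-a.e., and integration yields $\C_q(f^\lambda)\le(1-\lambda)\C_q(f^0)+\lambda\C_q(f^1)$.

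\emph{Lower semicontinuity.} Let $f_n\to f$ in $L^2(X,\mm)$ and set $L:=\liminf_n\C_q(f_n)$; I may assume $L<\infty$ and, after passing to a subsequence, that $\C_q(f_n)\to L$ and $f_n\to f$ $\mm$-a.e. Fix $N$. Each $(f_n)_N$ lies in $L^q(X,\mm)$ and has the $q$-relaxed slope $\relgradq{(f_n)_N}q$, and locality together with \eqref{eq:extendedrelaxed} give $\relgradq{(f_n)_N}q\le\relgradq{f_n}q$ $\mm$-a.e., so
$$
\sup_n\int_X\relgradq{(f_n)_N}q^q\,\d\mm\ \le\ q\,\sup_n\C_q(f_n)\ <\ \infty;
$$
moreover $(f_n)_N\to f_N$ in $L^q(X,\mm)$ by finiteness of $\mm$. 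The closedness/stability of $q$-relaxed slopes under $L^q$-convergence (Mazur's lemma applied to a weakly convergent subsequence of the $\relgradq{(f_n)_N}q$) then shows that $f_N$ has a $q$-relaxed slope whose $L^q$-norm to the $q$-th power is $\le\liminf_n\int_X\relgradq{(f_n)_N}q^q\,\d\mm\le qL$. Hence $f\in\mathcal C$ and, by the pointwise minimality of $\relgradq{f_N}q$ (Lemma~\ref{le:local}) and \eqref{eq:extendedrelaxed},
$$
\int_{\{|f|<N\}}\relgradq fq^q\,\d\mm\ =\ \int_{\{|f|<N\}}\relgradq{f_N}q^q\,\d\mm\ \le\ qL.
$$
Monotone convergence as $N\to\infty$ gives $\C_q(f)=\tfrac1q\int_X\relgradq fq^q\,\d\mm\le L=\liminf_n\C_q(f_n)$.

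The relaxation schemes themselves are routine; the one point I would write out in detail is the interaction between truncations, convex combinations and the extended definition \eqref{eq:mathcalC}--\eqref{eq:extendedrelaxed} — namely that these operations preserve membership in $\mathcal C$ together with the stated pointwise bounds on the relaxed slope. This is exactly what Proposition~\ref{prop:chain} and the closedness of the collection of $q$-relaxed slopes are designed to supply, so no estimate beyond those already in the paper is needed; morally, $\C_q$ is the $L^2$-relaxation of the convex functional $f\mapsto\tfrac1q\int_X|\nabla f|^q\,\d\mm$ on Lipschitz functions, which is why both properties are to be expected.
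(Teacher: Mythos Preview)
Your proof is correct and follows essentially the same strategy as the paper's: truncate, use stability of $q$-relaxed slopes under $L^q$-convergence together with weak lower semicontinuity of the $L^q$-norm, then pass to the limit in $N$ by monotone convergence. The paper dismisses convexity as ``elementary'' and for lower semicontinuity first treats the case where all $f_n$ already have $q$-relaxed slopes before reducing the general case to it via truncation; your version skips the intermediate case and handles the $L^q$-convergence of truncations a bit more explicitly (via a.e.\ convergence and dominated convergence), but the underlying mechanism is the same.
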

\begin{proof} The proof of convexity is elementary, so we focus on
lower semicontinuity. Let $(f_n)$ be convergent to $f$ in
$L^2(X,\mm)$ and we can assume, possibly extracting a subsequence
and with no loss of generality, that $\C_q(f_n)$ converges to a
finite limit.

Assume first that all $f_n$ have $q$-relaxed slope, so that that
$\relgradq {f_n}q$ is uniformly bounded in $L^q(X,\mm)$. Let
$f_{n(k)}$ be a subsequence such that $\relgradq {f_{n(k)}}q$ weakly
converges to $g$ in $L^q(X,\mm)$. Then $g$ is a $q$-relaxed slope of
$f$ and
$$
\C_q(f)\leq\frac1q\int_X|g|^q\,\d\mm\leq\liminf_{k\to\infty}\frac1q
\int_X|\nabla f_{n(k)}|^q_{*,q}\,\d\mm
=\liminf_{n\to\infty}\C_q(f_n).
$$
In the general case when $f_n\in{\mathcal C}$ we consider the
functions $f^N_n:=\max\{-N,\min\{f,N\}\}$ to conclude from the
inequality $|\nabla f^N_n|_{*,q}\leq|\nabla f_n|_{*,q}$ that
$f^N:=\max\{-N,\min\{f,N\}\}$ has $q$-relaxed slope for any $N\in\N$
and
$$
\int_X|\nabla f^N|_{*,q}^q\,\d\mm\leq \liminf_{n\to\infty}
\int_X|\nabla f^N_n|_{*,q}^q\,\d\mm\leq\liminf_{n\to\infty}
\int_X|\nabla f_n|_{*,q}^q\,\d\mm.
$$
Passing to the limit as $N\to\infty$, the conclusion follows by
monotone convergence.
\end{proof}

\begin{remark}\label{rem:basiclsc} {\rm More generally, the same argument proves the
$L^2(X,\mm)$-lower semicontinuity of the functional
$$
f\mapsto\int_X \frac{|\nabla f|_{*,q}^q}{|f|^\alpha}\,\d\mm
$$
in $\mathcal C$, for any $\alpha>0$. Indeed, locality and chain rule
allow the reduction to nonnegative functions $f_n$ and we can use
the truncation argument of Theorem~\ref{thm:cheeger} to reduce
ourselves to functions with values in an interval $[c,C]$ with
$0<c\leq C<\infty$. In this class, we can again use the chain rule
to prove the identity
$$
\int_X|\nabla f^\beta|^q_{*,q}\,\d\mm= |\beta|^q\int_X\frac{|\nabla
f|_{*,q}^q}{|f|^\alpha}\,\d\mm
$$
with $\beta:=1-\alpha/q$ to obtain the result when $\alpha\neq q$.
If $\alpha=q$ we use a logarithmic transformation.
 }\fr
\end{remark}

Since the finiteness domain of $\C_q$ is dense in $L^2(X,\mm)$ (it
includes bounded Lipschitz functions), the Hilbertian theory of
gradient flows (see for instance \cite{Brezis73},
\cite{Ambrosio-Gigli-Savare08}) can be applied to Cheeger's
functional \eqref{def:Cheeger} to provide, for all $f_0\in
L^2(X,\mm)$, a locally absolutely continuous map $t\mapsto f_t$ from
$(0,\infty)$ to $L^2(X,\mm)$, with $f_t\to f_0$ as $t\downarrow 0$,
whose derivative satisfies
\begin{equation}\label{eq:ODE}
\frac{d}{dt}f_t\in -\partial^-\C_q(f_t)\qquad\text{for a.e. $t\in
(0,\infty)$.}
\end{equation}

Having in mind the regularizing effect of gradient flows, namely the
selection of elements with minimal $L^2(X,\mm)$ norm in
$\partial^-\C_q$, the following definition is natural.

\begin{definition}[$q$-Laplacian]\label{def:delta}
The $q$-Laplacian $\Delta_q f$ of $f\in L^2(X,\mm)$ is defined for
those $f$ such that $\partial^-\C_q(f)\neq\emptyset$. For those $f$,
$-\Delta_q f$ is the element of minimal $L^2(X,\mm)$ norm in
$\partial^-\C_q(f)$. The domain of $\Delta_q$ will be denoted by
$D(\Delta_q)$.
\end{definition}

\begin{remark}[Potential lack of linearity]\label{re:laplnonlin}{\rm
It should be observed that, even in the case $q=2$, in general the
Laplacian is \emph{not} a linear operator. Still, the trivial
implication
\[
v\in\partial^- \C_q(f)\qquad\Longrightarrow\qquad \lambda^{q-1}
v\in\partial^- \C_q(\lambda f),\quad\forall \lambda\in\R,
\]
ensures that the $q$-Laplacian (and so the gradient flow of $\C_q$)
is $(q-1)$-homogenous. }\fr
\end{remark}

We can now write
$$
\frac{\d}{\d t}f_t=\Delta_q f_t
$$
for gradient flows $f_t$ of $\C_q$, the derivative being understood
in $L^2(X,\mm)$, in accordance with the classical case.

\begin{proposition}[Integration by parts]
\label{prop:deltaineq} For all $f\in D(\Delta_q)$, $g\in D(\C_q)$ it
holds
\begin{equation}
\label{eq:delta1} -\int_X g\Delta_q f\,\d\mm\leq \int_X \relgradq
gq|\nabla f|_{*,q}^{q-1}\,\d\mm.
\end{equation}
Equality holds if $g=\phi(f)$ with $\phi\in C^1(\R)$ with bounded
derivative on the image of $f$.
\end{proposition}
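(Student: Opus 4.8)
The only information available about $\Delta_q f$ is that $-\Delta_q f$ is the minimal–norm element of $\partial^-\C_q(f)$, so the whole statement will be extracted from the subgradient inequality
\[
\C_q(h)\ \ge\ \C_q(f)-\int_X (h-f)\,\Delta_q f\,\d\mm\qquad\text{for all }h\in L^2(X,\mm),
\]
tested along the segment $h=f+\eps g$. Observe first that $D(\C_q)$ is a linear subspace of $L^2(X,\mm)$: it is stable under scalar multiplication because $\C_q(\lambda f)=|\lambda|^q\C_q(f)$ (chain rule, Proposition~\ref{prop:chain}), and stable under sums by convexity of $\C_q$ together with this homogeneity; hence $f+\eps g\in D(\C_q)$ for every $\eps\in\R$ and the right–hand side above is finite.

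For the inequality \eqref{eq:delta1} I would use the subadditivity $\relgradq{(f+\eps g)}q\le \relgradq fq+\eps\,\relgradq gq$ valid for $\eps>0$ — this follows from \eqref{eq:subadd} exactly as the convexity of the class of relaxed slopes discussed after Definition~\ref{def:genuppergrad} (add the Lipschitz approximations, using the Mazur–type characterization $|\nabla f_n|\le g_n$). Together with the subgradient inequality at $h=f+\eps g$ this gives, after dividing by $\eps>0$,
\[
-\int_X g\,\Delta_q f\,\d\mm\ \le\ \frac1{q\eps}\int_X\Bigl(\bigl(\relgradq fq+\eps\,\relgradq gq\bigr)^q-\relgradq fq^{\,q}\Bigr)\,\d\mm .
\]
It remains to let $\eps\downarrow0$ in the right–hand side. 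Pointwise, for $a,b\ge0$ the map $\eps\mapsto \eps^{-1}\bigl((a+\eps b)^q-a^q\bigr)$ is a difference quotient of the convex function $\eps\mapsto(a+\eps b)^q$, hence nondecreasing in $\eps$, and it converges to $q\,a^{q-1}b$ as $\eps\downarrow0$; since its value at $\eps=1$ is dominated by $2^{q-1}(a^q+b^q)\in L^1(X,\mm)$ (because $\relgradq fq,\relgradq gq\in L^q(X,\mm)$), monotone convergence lets us pass to the limit and produces exactly $\int_X \relgradq gq\,\relgradq fq^{\,q-1}\,\d\mm$, i.e. \eqref{eq:delta1}. This is the one genuinely delicate point: subadditivity is not additivity, so only a one–sided bound is obtained, but that is precisely what \eqref{eq:delta1} asserts, and the monotonicity of the difference quotients makes the limit passage elementary.

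For the equality case take $g=\phi(f)$ with $\phi\in C^1$ and $\phi'$ bounded on the image of $f$, and set $\psi_\eps(t):=t+\eps\phi(t)$, which for $|\eps|$ small is $C^1$ and nondecreasing. By the equality case of the chain rule (Proposition~\ref{prop:chain}(b)), $\relgradq{(\psi_\eps(f))}q=\bigl(1+\eps\phi'(f)\bigr)\relgradq fq$, so
\[
\C_q\bigl(\psi_\eps(f)\bigr)-\C_q(f)=\frac1q\int_X\Bigl(\bigl(1+\eps\phi'(f)\bigr)^q-1\Bigr)\relgradq fq^{\,q}\,\d\mm .
\]
Plugging $h=\psi_\eps(f)$, i.e. $h-f=\eps\,\phi(f)$, into the subgradient inequality, dividing by $\eps$, and letting $\eps\to0^+$ and $\eps\to0^-$ separately, we may use dominated convergence in both one–sided limits, since $\phi'$ bounded on the image of $f$ makes the integrand $O(\eps)$ uniformly; this yields the identity
\[
-\int_X \phi(f)\,\Delta_q f\,\d\mm=\int_X \phi'(f)\,\relgradq fq^{\,q}\,\d\mm .
\]
Combined with the chain rule $\relgradq{(\phi(f))}q=|\phi'(f)|\,\relgradq fq$, this is precisely equality in \eqref{eq:delta1} when $\phi$ is nondecreasing, and in general it is the natural signed integration–by–parts formula. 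I expect no further obstacle: the remaining steps are standard manipulations of subdifferentials of convex functionals and of the chain rule already established for $\relgradq\cdot q$.
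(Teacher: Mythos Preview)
Your proof is correct and follows essentially the same route as the paper's: the subgradient inequality for $-\Delta_q f\in\partial^-\C_q(f)$ combined with the subadditivity $\relgradq{(f+\eps g)}q\le\relgradq fq+\eps\relgradq gq$ for the inequality, and the exact chain rule $\relgradq{(f+\eps\phi(f))}q=(1+\eps\phi'(f))\relgradq fq$ plus the two-sided limit $\eps\to 0^\pm$ for the equality case. Your justification of the limit passage via monotonicity of the difference quotients, and your remark that literal equality in \eqref{eq:delta1} requires $\phi$ nondecreasing (while the signed identity $-\int\phi(f)\Delta_q f\,\d\mm=\int\phi'(f)\relgradq fq^{\,q}\,\d\mm$ holds in general), are in fact slightly more careful than the paper's ``$+\,o(\eps)$'' and ``gives the thesis''.
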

\begin{proof}
Since $-\Delta_q f\in\partial^-\C_q(f)$ it holds
\[
\C_q(f)-\int_X \eps g\Delta_q f\,\d\mm\leq \C_q(f+\eps
g),\qquad\forall g\in L^q(X,\mm),\,\,\eps\in\R.
\]
For $\eps>0$, $\relgradq fq+\eps \relgradq gq$ is a $q$-relaxed
slope of $f+\eps g$ (possibly not minimal) whenever $f$ and $g$ have
$q$-relaxed slope. By truncation, it is immediate to obtain from
this fact that $f,\,g\in\mathcal C$ implies $f+\eps g\in\mathcal C$
and
$$
\relgradq {(f+\eps g)}q \leq\relgradq fq+\eps \relgradq
gq\qquad\text{$\mm$-a.e. in $X$.}
$$
Thus it holds $q\C_q(f+\eps g)\leq\int_X(\relgradq fq+\eps\relgradq
gq)^q\,\d\mm$ and therefore
\[
-\int_X\eps g\Delta_q f\,\d\mm\leq \frac1q\int_X(\relgradq
fq+\eps\relgradq gq)^q-|\nabla f|_{*,q}^q\,\d\mm=\eps\int_X\relgradq
gq|\nabla f|^{q-1}_{*,q}\,\d\mm+o(\eps).
\]
Dividing by $\eps$ and letting $\eps\downarrow 0$ we get
\eqref{eq:delta1}.

For the second statement we recall that $\relgradq {(f+\eps
\phi(f))}q=(1+\eps \phi'(f))\relgradq fq$ for $|\eps|$ small enough.
Hence
\[
\C_q(f+\eps \phi(f))-\C_q(f)= \frac{1}{q}\int_X|\nabla
f|_{*,q}^q\bigl((1+\eps \phi'(f))^q-1\bigr)\,\d\mm=\eps\int_X|\nabla
f|_{*,q}^q \phi'(f)\,\d\mm+o(\eps),
\]
which implies that for any $v\in \partial^-\C_q(f)$ it holds
$\int_Xv \phi(f)\,\d\mm=\int_X|\nabla f|_{*,q}^q\phi'(f)\,\d\mm$,
and gives the thesis with $v=-\Delta_q f$.
\end{proof}

\begin{proposition}[Some properties of the gradient flow of $\C_q$]\label{prop:basecal}
Let $f_0\in L^2(X,\mm)$ and let $(f_t)$ be the gradient flow of
$\C_q$ starting from $f_0$. Then the following properties hold.\\*
\noindent (Mass preservation) $\int f_t\,\d\mm=\int f_0\,\d\mm$ for
any $t\geq 0$.\\* \noindent (Maximum principle) If $f_0\leq C$
(resp. $f_0\geq c$) $\mm$-a.e. in $X$, then $f_t\leq C$ (resp
$f_t\geq c$) $\mm$-a.e. in $X$ for any $t\geq 0$.\\* (Energy
dissipation) Suppose $0<c\leq f_0\leq C<\infty$ $\mm$-a.e. in $X$
and $\Phi\in C^2([c,C])$. Then $t\mapsto\int\Phi(f_t)\,\d\mm$ is
locally absolutely continuous in $(0,\infty)$ and it holds
\[
\frac{\d}{\dt}\int \Phi(f_t)\,\d\mm=-\int\Phi''(f_t)|\nabla
f_t|_{*,q}^q\,\d\mm\qquad\text{for a.e. $t\in (0,\infty)$.}
\]
\end{proposition}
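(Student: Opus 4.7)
The plan is to treat the three properties in the stated order, since the energy-dissipation identity will rely on the maximum principle; in each case the basic tool will be the integration-by-parts formula of Proposition~\ref{prop:deltaineq} combined with a chain rule for the $L^2(X,\mm)$-locally absolutely continuous curve $t\mapsto f_t$. The finiteness of $\mm$ will be used freely so that constants and bounded functions lie in $D(\C_q)$, and $L^2$-convergence implies $L^1$-convergence.

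For mass preservation I would test the ODE $\tfrac{\d}{\dt}f_t=\Delta_q f_t$ against the constant function $1\in D(\C_q)$: since $|\nabla 1|_{*,q}=0$, Proposition~\ref{prop:deltaineq} applied with $g=1$ and $g=-1$ forces $\int_X \Delta_q f_t\,\d\mm=0$ for a.e.\ $t>0$. Local absolute continuity of $t\mapsto f_t$ in $L^2(X,\mm)$ together with $\mm(X)<\infty$ then transfers this to $\tfrac{\d}{\dt}\int_X f_t\,\d\mm=0$, yielding the conservation.

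For the upper bound in the maximum principle, assume $f_0\le C$ $\mm$-a.e.\ and pick a smooth convex approximation $\Phi_\eps\in C^2(\R)$ of $r\mapsto\tfrac12((r-C)^+)^2$, with $\Phi_\eps\equiv0$ on $(-\infty,C]$, $\Phi_\eps'\ge 0$ bounded and Lipschitz, and $\Phi_\eps\uparrow\tfrac12((\cdot-C)^+)^2$ pointwise as $\eps\downarrow 0$. Using the standard $L^2$-chain rule for Lipschitz compositions along $f_t$, I would compute
\[
\frac{\d}{\dt}\int_X\Phi_\eps(f_t)\,\d\mm=\int_X\Phi_\eps'(f_t)\,\Delta_q f_t\,\d\mm\qquad\text{for a.e.\ }t>0,
\]
and then invoke the equality case of Proposition~\ref{prop:deltaineq} with $\phi=\Phi_\eps'$ to rewrite the right-hand side as $-\int_X\Phi_\eps''(f_t)|\nabla f_t|_{*,q}^q\,\d\mm\le 0$. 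Since $\int_X\Phi_\eps(f_0)\,\d\mm=0$, monotone convergence in $\eps$ forces $(f_t-C)^+\equiv 0$ $\mm$-a.e., hence $f_t\le C$. The lower bound $f_t\ge c$ follows by the same argument applied to the symmetric convex function $\tfrac12((c-r)^+)^2$, or more quickly by noticing that $\C_q$ is invariant under $f\mapsto -f$ so $-f_t$ is the gradient flow starting from $-f_0$.

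For the energy dissipation identity, the maximum principle pins $f_t$ to $[c,C]$ for all $t\ge 0$, so $\Phi\in C^2([c,C])$ has bounded and Lipschitz derivative on the relevant range; after extending $\Phi'$ to a $C^1$ function on $\R$ with bounded derivative, I would justify
\[
\frac{\d}{\dt}\int_X\Phi(f_t)\,\d\mm=\int_X\Phi'(f_t)\,\Delta_q f_t\,\d\mm
\]
by writing the finite increment as $\Phi(f_s)-\Phi(f_t)=(f_s-f_t)\int_0^1\Phi'\bigl(f_t+r(f_s-f_t)\bigr)\,\d r$, dividing by $s-t$, and passing to the limit in $L^1(X,\mm)$ via dominated convergence (allowed since $\mm(X)<\infty$ and $\Phi'$ is bounded on $[c,C]$) using the $L^2$-local absolute continuity of $t\mapsto f_t$. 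The equality case of Proposition~\ref{prop:deltaineq} applied with $\phi=\Phi'$ then converts the right-hand side into $-\int_X\Phi''(f_t)|\nabla f_t|_{*,q}^q\,\d\mm$, which is the claim.

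The only delicate technical step is the chain-rule verification for $t\mapsto\Phi(f_t)$ used both in the maximum principle and in the energy dissipation. It is however entirely standard given $L^2$-local absolute continuity of $f_t$, uniform boundedness and Lipschitz regularity of $\Phi'$ on the (a.e.) range of $f_t$, and $\mm(X)<\infty$; everything else amounts to a clean application of Proposition~\ref{prop:deltaineq} in its equality form.
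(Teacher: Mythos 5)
Your proof is correct, and two of the three parts (mass preservation via testing \eqref{eq:delta1} against $\pm\mathbf 1$, and energy dissipation via the chain rule along the $L^2$-absolutely continuous curve plus the equality case of Proposition~\ref{prop:deltaineq} with $g=\Phi'(f_t)$) follow essentially the same route as the paper. The genuine divergence is in the maximum principle. The paper works at the discrete level: it shows that each step of the implicit Euler scheme preserves the bound $f\le C$ by a competitor argument (replace the minimizer $f^\tau$ by $\min\{f^\tau,C\}$, which by locality does not increase $\C_q$ and strictly decreases the $L^2$-penalty unless $\mm(\{f^\tau>C\\})=0$), and then passes to the limit using the convergence of the minimizing-movement scheme to the gradient flow. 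You instead run a Lyapunov-function argument directly along the continuous flow: differentiate $\int\Phi_\eps(f_t)\,\d\mm$ for a smoothed convex penalty $\Phi_\eps$ vanishing on $(-\infty,C]$, use the equality case of Proposition~\ref{prop:deltaineq} to see the derivative is $-\int\Phi_\eps''(f_t)\relgradq{f_t}{q}^q\,\d\mm\le 0$, and conclude from $\int\Phi_\eps(f_0)\,\d\mm=0$. Both are sound; your route avoids invoking the convergence theory of the implicit Euler scheme but leans on the equality case of the integration-by-parts formula (hence on the chain rule for relaxed slopes and on the regularizing effect putting $f_t\in D(\Delta_q)$ for a.e.\ $t$), together with a smoothing of $\tfrac12((r-C)^+)^2$ since that function is only $C^{1,1}$. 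The paper's variational argument is slightly more robust in that it needs only locality of $\relgradq{\cdot}{q}$ and no differentiation along the flow; yours is self-contained at the PDE level and reuses machinery you need anyway for the dissipation identity.
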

\begin{proof} (Mass preservation) Just notice that from \eqref{eq:delta1} we get
\[
\left|\frac{\d}{\dt}\int f_t\,\d\mm\right|=\left|\int
\mathbf{1}\cdot\Delta_q f_t\,\d\mm\right|\leq\int\relgradq{\mathbf
1}q{|\nabla f_t|_{*,q}^q}\,\d\mm=0\quad\text{for a.e. $t>0$},
\]
where $\mathbf 1$ is the function identically equal to 1, which has
minimal $q$-relaxed slope equal to 0 by \eqref{eq:facile}.\\*
(Maximum principle) Fix $f\in L^2(X,\mm)$, $\tau>0$ and, according
to the so-called implicit Euler scheme, let $f^\tau$ be the unique
minimizer of
\[
g\qquad\mapsto\qquad \C_q(g)+\frac{1}{2\tau}\int_X|g-f|^2\,\d\mm.
\]
Assume that $f\leq C$. We claim that in this case $f^\tau\leq C$ as
well. Indeed, if this is not the case we can consider the competitor
$g:=\min\{f^\tau,C\}$ in the above minimization problem. By locality
we get $\C(g)\leq\C(f^\tau)$ and the $L^2$ distance of $f$ and $g$
is strictly smaller than the one of $f$ and $f^\tau$ as soon as
$\mm(\{f^\tau>C\})>0$, which is a contradiction. Starting from
$f_0$, iterating this procedure, and using the fact that the
implicit Euler scheme converges as $\tau\downarrow 0$ (see
\cite{Brezis73}, \cite{Ambrosio-Gigli-Savare08} for details) to the
gradient flow we get the conclusion.\\* (Energy dissipation) Since
$t\mapsto f_t\in L^2(X,\mm)$ is locally absolutely continuous and,
by the maximum principle, $f_t$ take their values in $[c,C]$
$\mm$-a.e., from the fact that $\Phi$ is Lipschitz in $[c,C]$ we get
the claimed absolute continuity statement. Now notice that we have
$\tfrac{\d}{\d t}\int \Phi(f_t) \,\d\mm=\int \Phi'(f_t)\Delta_q
f_t\,\d\mm$ for a.e. $t>0$. Since $\Phi'$ belongs to $C^1([c,C])$,
from \eqref{eq:delta1} with $g=\Phi'(f_t)$ we get the conclusion.
\end{proof}

\section{Equivalence of gradients}\label{sequivalence}

In this section we prove the equivalence of weak gradients. We
assume that $(X,\sfd)$ is compact (this assumption is used to be
able to apply the results of Section~\ref{sec:hopflax} and in
Lemma~\ref{le:kuwada}, to apply \eqref{eq:dualitabase}) and that
$\mm$ is a finite Borel measure, so that the $L^2$-gradient flow of
$\C_q$ can be used.

We start with the following proposition, which relates energy
dissipation to a (sharp) combination of $q$-weak gradients and
metric dissipation in $W_p$.

\begin{proposition}\label{prop:boundweak}
Let $\mu_t=f_t\mm$ be a curve in $AC^p([0,1],(\prob X,W_p))$. Assume
that for some $0<c<C<\infty$ it holds $c\leq f_t\leq C$ $\mm$-a.e.
in $X$ for any $t\in[0,1]$, and that $f_0$ is Sobolev along $q$-a.e.
curve with $\weakgradq{f_0}q\in L^q(X,\mm)$. Then for all $\Phi\in
C^2([c,C])$ convex it holds
\[
\int \Phi(f_0)\,\d\mm-\int\Phi(f_t)\,\d\mm\leq
\frac1q\iint_0^t\bigl(\Phi''(f_0)|\nabla f_0|_{w,q}\bigr)^qf_s\,\d
s\,\d\mm+\frac1p\int_0^t|\dot\mu_s|^p\,\d s\qquad\forall t>0.
\]
\end{proposition}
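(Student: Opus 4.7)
The plan is to combine convexity of $\Phi$ with the superposition principle and a weak upper gradient estimate along the test plan produced by Lisini's theorem. First, since $\Phi$ is $C^2$ and convex, the tangent inequality gives
$$
\Phi(f_0)-\Phi(f_t)\leq \Phi'(f_0)(f_0-f_t)\quad\mm\text{-a.e.,}
$$
which after integration against $\mm$ becomes
$$
\int_X(\Phi(f_0)-\Phi(f_t))\,\d\mm\leq \int_X\Phi'(f_0)\,\d\mu_0-\int_X\Phi'(f_0)\,\d\mu_t.
$$
By Proposition~\ref{prop:lisini} applied to $\mu_s\in AC^p([0,1],(\prob X,W_p))$, there exists $\ppi\in\prob{C([0,1],X)}$, concentrated on $AC^p([0,1],X)$, with $(\e_s)_\sharp\ppi=\mu_s$ for every $s\in[0,1]$ and satisfying the Lisini identity \eqref{eq:Lisini}. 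The uniform bound $f_s\leq C$ yields $(\e_s)_\sharp\ppi\leq C\mm$, so $\ppi$ is a $p$-test plan with $C(\ppi)\leq C$, and the pushforward property rewrites the previous display as
$$
\int_X(\Phi(f_0)-\Phi(f_t))\,\d\mm\leq \int\bigl(\Phi'(f_0)(\gamma_0)-\Phi'(f_0)(\gamma_t)\bigr)\,\d\ppi(\gamma).
$$

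Next, since $f_0$ is Sobolev along $q$-a.e.\ curve and takes values in the compact interval $[c,C]$, and $\Phi'$ is $C^1$ with $\Phi''\geq 0$ on $[c,C]$, an immediate chain rule at the level of $q$-weak upper gradients shows that $\Phi''(f_0)\weakgradq{f_0}q$ is a $q$-weak upper gradient of $\Phi'(f_0)$. Applying this property to the $p$-test plan $\ppi$ restricted to $[0,t]$ (cf.\ Remark~\ref{re:restr}), and using Remark~\ref{re:easy} to identify the endpoint values of $\Phi'(f_0)\circ\gamma$ with those of its absolutely continuous representative, we obtain, for $\ppi$-a.e.\ $\gamma$,
$$
\Phi'(f_0)(\gamma_0)-\Phi'(f_0)(\gamma_t)\leq \int_0^t\Phi''(f_0(\gamma_s))\weakgradq{f_0}q(\gamma_s)\,|\dot\gamma_s|\,\d s.
$$
Integrating against $\ppi$ and invoking Fubini's theorem bounds the right-hand side of the previous display by
$$
\int_0^t\int\Phi''(f_0(\gamma_s))\weakgradq{f_0}q(\gamma_s)\,|\dot\gamma_s|\,\d\ppi(\gamma)\,\d s.
$$

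Finally, Young's inequality $ab\leq a^q/q+b^p/p$ with $a=\Phi''(f_0(\gamma_s))\weakgradq{f_0}q(\gamma_s)$ and $b=|\dot\gamma_s|$ splits the integrand into two pieces; the pushforward identity $(\e_s)_\sharp\ppi=f_s\mm$ transforms the first into $\tfrac{1}{q}\iint_0^t\bigl(\Phi''(f_0)\weakgradq{f_0}q\bigr)^q f_s\,\d s\,\d\mm$, while \eqref{eq:Lisini} transforms the second into $\tfrac{1}{p}\int_0^t|\dot\mu_s|^p\,\d s$, yielding the claim. The principal technical point is the chain-rule step together with the identification of endpoint values via the test plan $\ppi$; once both are in hand, the remainder is merely Young's inequality applied to the superposition representation.
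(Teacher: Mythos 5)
Your proposal is correct and follows essentially the same route as the paper's proof: convexity of $\Phi$ to reduce to $\Phi'(f_0)$, Lisini's superposition principle to produce a $p$-test plan representing $(\mu_s)$, the chain rule for $q$-weak upper gradients applied to $\Phi'(f_0)$ along $q$-a.e.\ curve, and finally Young's inequality together with the pushforward and metric-speed identities. The only (harmless) detail worth recording explicitly is that $\iint_0^1|\dot\gamma_s|^p\,\d s\,\d\ppi=\int_0^1|\dot\mu_s|^p\,\d s<\infty$, which is needed alongside $(\e_s)_\sharp\ppi\leq C\mm$ to conclude that $\ppi$ is a $p$-test plan.
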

\begin{proof} Let $\ppi\in\prob{C([0,1],X)}$ be a plan associated to the curve
$(\mu_t)$ as in Proposition~\ref{prop:lisini}. The assumption
$f_t\leq C$ $\mm$-a.e. and the fact that
$\iint_0^1|\dot\gamma_t|^p\,\d
t\,\d\ppi(\gamma)=\int|\dot\mu_t|^p\,\d t<\infty$ guarantee that
$\ppi$ is a $p$-test plan. Now notice that it holds
$\weakgradq{\Phi'(f_0)}q=\Phi''(f_0)\weakgradq{f_0}q$ (it follows
easily from the characterization \eqref{eq:pointwisewug}), thus we
get
\[
\begin{split}
\int \Phi(f_0)-\int\Phi(f_t)\,\d\mm&\leq
\int \Phi'(f_0)(f_0-f_t)\,\d\mm=\int \Phi'(f_0)\circ \e_0-\Phi'(f_0)\circ \e_t\,\d\ppi\\
&\leq\iint_0^t\Phi''(f_0(\gamma_s))\weakgradq{f_0}q(\gamma_s)|\dot\gamma_s|\,\d s\,\d\ppi(\gamma)\\
&\leq\frac1q\iint_0^t\bigl(\Phi''(f_0(\gamma_s))|\nabla
f_0|_{w,q}(\gamma_s)\bigr)^q\,\d s\,\d\ppi(\gamma)
+\frac1p\iint_0^t|\dot\gamma_s|^p\,\d s\,\d\ppi(\gamma)\\
&=\frac1q\iint_0^t\bigl(\Phi''(f_0)|\nabla f_0|_{w,q}\bigr)^qf_s\,\d
s\,\d\mm+\frac1p\int_0^t|\dot\mu_s|^p\,\d s.
\end{split}
\]
\end{proof}

The key argument to achieve the identification is the following
lemma which gives a sharp bound on the $W_p$-speed of the
$L^2$-gradient flow of $\C_q$. This lemma has been introduced in
\cite{Kuwada10} and then used in
\cite{GigliKuwadaOhta10,Ambrosio-Gigli-Savare11} to study the heat
flow on metric measure spaces.

\begin{lemma}[Kuwada's lemma]\label{le:kuwada}
Let $f_0\in L^q(X,\mm)$ and let $(f_t)$ be the gradient flow of
$\C_q$ starting from $f_0$. Assume that for some $0<c<C<\infty$ it
holds $c\leq f_0\leq C$ $\mm$-a.e. in $X$, and that $\int
f_0\,\d\mm=1$. Then the curve $t\mapsto \mu_t:=f_t\mm\in\prob X$ is
absolutely continuous w.r.t. $W_p$ and it holds
\[
|\dot\mu_t|^p\leq\int\frac{|\nabla f_t|_{*,q}^q}{f_t^{p-1}}\,\d
\mm\qquad\text{for a.e. $t\in (0,\infty)$.}
\]
\end{lemma}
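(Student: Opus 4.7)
The plan is the now-standard Kuwada duality argument: use $L^p$-Kantorovich duality to reduce $W_p^p(\mu_s,\mu_t)$ to a supremum over Lipschitz test functions $\psi$, then exploit the Hamilton--Jacobi subsolution property of $Q_\tau\psi$ (Theorem~\ref{thm:subsol}) and the integration by parts for $\Delta_q$ (Proposition~\ref{prop:deltaineq}) to bound this supremum, relying on Young's inequality to produce the desired cancellation. Concretely, set $\tau:=t-s>0$ and $g(r):=\int|\nabla f_r|_{*,q}^q/f_r^{p-1}\,\d\mm$. Rescaling \eqref{eq:dualitabase} to the cost $\sfd^p/(p\tau^{p-1})$ yields
\begin{equation*}
\tfrac{1}{p\tau^{p-1}}W_p^p(\mu_s,\mu_t)=\sup_{\psi\in\Lip(X)}\Bigl[\int Q_\tau\psi\,\d\mu_s-\int\psi\,\d\mu_t\Bigr],
\end{equation*}
so it suffices to prove $\int Q_\tau\psi\,\d\mu_s-\int\psi\,\d\mu_t\leq\tfrac1p\int_s^t g(r)\,\d r$ for every bounded Lipschitz $\psi$.

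Fix such $\psi$ and introduce the coupled functional $G(r):=\int Q_{\tau-(r-s)}\psi\cdot f_r\,\d\mm$ on $[s,t]$, so $G(s)=\int Q_\tau\psi\,\d\mu_s$ and $G(t)=\int\psi\,\d\mu_t$. Writing $\phi_r:=Q_{\tau-(r-s)}\psi$, which is Lipschitz with values in $C(X)$ by Proposition~\ref{prop:timederivative} and uniformly bounded Lipschitz in space by Proposition~\ref{prop:slopesqt}, a Leibniz computation against the $L^2$-AC curve $r\mapsto f_r$ (whose $L^2$-derivative is $\Delta_q f_r$ a.e.) gives
\begin{equation*}
G'(r)=\int(\partial_r\phi_r)\,f_r\,\d\mm+\int\phi_r\,\Delta_q f_r\,\d\mm\qquad\text{for a.e.\ }r.
\end{equation*}
Theorem~\ref{thm:subsol}, combined with the chain rule $u=\tau-(r-s)$, gives $\partial_r\phi_r\geq\tfrac1q|\nabla\phi_r|^q$ pointwise, while Proposition~\ref{prop:deltaineq} together with \eqref{eq:facile} yields $-\int\phi_r\Delta_q f_r\,\d\mm\leq\int|\nabla\phi_r|\,|\nabla f_r|_{*,q}^{q-1}\,\d\mm$. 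Splitting $|\nabla\phi_r|\cdot|\nabla f_r|_{*,q}^{q-1}=(|\nabla\phi_r|f_r^{1/q})(|\nabla f_r|_{*,q}^{q-1}/f_r^{1/q})$ and applying Young's inequality, using $(q-1)p=q$ and $p/q=p-1$, gives
\begin{equation*}
|\nabla\phi_r|\,|\nabla f_r|_{*,q}^{q-1}\leq\tfrac1q|\nabla\phi_r|^q f_r+\tfrac1p\,|\nabla f_r|_{*,q}^q/f_r^{p-1}.
\end{equation*}
Plugging both estimates into $-G'(r)$, the two $\tfrac1q|\nabla\phi_r|^q f_r$ terms cancel exactly, leaving $-G'(r)\leq\tfrac1p g(r)$; integrating on $[s,t]$ and passing to the sup over $\psi$ delivers $W_p^p(\mu_s,\mu_t)\leq(t-s)^{p-1}\int_s^t g(r)\,\d r$.

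From this integrated bound to the pointwise conclusion: the maximum principle ($f_r\geq c$) and the monotonicity of $\C_q$ along its gradient flow give $g(r)\leq c^{-(p-1)}q\,\C_q(f_r)\in L^\infty_{\mathrm{loc}}(0,\infty)$, so $g\in L^1_{\mathrm{loc}}(0,\infty)$. Hölder's inequality upgrades the above into $W_p(\mu_s,\mu_t)\leq(t-s)^{1/q}(\int_s^t g)^{1/p}$, which by uniform absolute continuity of the Lebesgue integral of $g$ forces $r\mapsto\mu_r$ to be absolutely continuous with respect to $W_p$; Lebesgue differentiation at any Lebesgue point of $g$ then gives $|\dot\mu_t|^p\leq g(t)$ for a.e.\ $t$. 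The main technical obstacle is the rigorous computation of $G'(r)$: one has to justify the Leibniz rule for the product of the $C(X)$-Lipschitz family $\phi_r$ with the $L^2$-AC curve $f_r$, and identify $\int\phi_r\,\partial_r f_r\,\d\mm$ with $\int\phi_r\,\Delta_q f_r\,\d\mm$ via the minimal-norm selection of Definition~\ref{def:delta}; both steps are routine but require care, particularly in combining the exceptional $r$-sets coming from the HJ pointwise identity and from the gradient-flow identity.
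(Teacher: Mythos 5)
Your proof is correct and follows essentially the same route as the paper's: Kantorovich duality, the Hamilton--Jacobi subsolution property of the Hopf--Lax interpolation, the integration-by-parts inequality for $\Delta_q$ combined with \eqref{eq:facile}, and Young's inequality producing the exact cancellation of the $\tfrac1q|\nabla\phi_r|^qf_r$ terms; the only difference is cosmetic, namely that you rescale the cost so the interpolation runs through $Q_u\psi$ for $u\in[0,\tau]$ instead of $Q_{\tau/\ell}\varphi$ for $\tau\in[0,\ell]$ as in the paper. The technical points you flag at the end (the Leibniz rule for the product of the $C(X)$-valued Lipschitz curve with the $L^2$-absolutely continuous curve, the identification of $\partial_r f_r$ with $\Delta_q f_r$, and the handling of the exceptional time sets) are exactly the ones the paper addresses.
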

\begin{proof}
We start from the duality formula \eqref{eq:dualitabase} (written
with $\varphi=-\psi$)
\begin{equation}\label{eq:dualityQ}
\frac{W_p^p(\mu,\nu)}p=\sup_{\varphi\in{\rm Lip}(X)}\int_X
Q_1\varphi\, d\nu-\int_X\varphi\,d\mu.
\end{equation}
where $Q_t\varphi$ is defined in \eqref{eq:Nicola1} and
\eqref{eq:Nicola2}, so that $Q_1\varphi=\psi^c$. Fix $\varphi\in{\rm
Lip}(X)$ and recall (Proposition~\ref{prop:timederivative}) that the
map $t\mapsto Q_t\varphi$ is Lipschitz with values in $C(X)$, in
particular also as a $L^2(X,\mm)$-valued map.

Fix also $0\leq t<s$, set $\ell=(s-t)$ and recall that since $(f_t)$
is a gradient flow of $\C_q$ in $L^2(X,\mm)$, the map $[0,\ell]\ni
\tau\mapsto f_{t+\tau}$ is absolutely continuous with values in
$L^2(X,\mm)$. Therefore, since both factors are uniformly bounded,
the map $[0,\ell]\ni\tau\mapsto Q_{\frac\tau\ell}\varphi f_{t+\tau}$
is absolutely continuous with values in $L^2(X,\mm)$. In addition,
the equality
\[
\frac{Q_{\frac{\tau+h}\ell}\varphi
f_{t+\tau+h}-Q_{\frac{\tau}\ell}\varphi
f_{t+\tau}}{h}=f_{t+\tau}\frac{Q_{\frac{\tau+h}\ell}-Q_{\frac\tau\ell}\varphi
}{h}+Q_{\frac{\tau+h}\ell}\varphi\frac{ f_{t+\tau+h}-
f_{t+\tau}}{h},
\]
together with the uniform continuity of $(x,\tau)\mapsto
Q_{\frac\tau\ell}\varphi(x)$ shows that the derivative of
$\tau\mapsto Q_{\frac\tau\ell}\varphi f_{t+\tau}$ can be computed
via the Leibniz rule.

We have:
\begin{equation}
\label{eq:step1}
\begin{split}
\int_X Q_1\varphi\,\d\mu_s-\int_X\varphi \,\d\mu_t&
=\int Q_1\varphi f_{t+\ell}\,\d\mm-\int_X\varphi f_t\,\d\mm
=\int_X\int_0^\ell\frac{\d}{\d\tau}\big(Q_{\frac\tau\ell}\varphi f_{t+\tau}\big)d\tau \,\d\mm\\
&\leq\int_X\int_0^\ell -\frac{|\nabla Q_{\frac\tau\ell}\varphi
|^q}{q\ell}f_{t+\tau}+
Q_{\frac\tau\ell}\varphi \Delta_q f_{t+\tau}\,\d\tau \,\d\mm,\\
\end{split}
\end{equation}
having used Theorem~\ref{thm:subsol}.

Observe that by inequalities \eqref{eq:delta1} and \eqref{eq:facile}
we have
\begin{equation}
\label{eq:sarannouguali}
\begin{split}
\int_X Q_{\frac\tau\ell}\varphi \Delta_q f_{t+\tau} \,\d\mm& \leq
\int_X\relgradq{Q_{\frac\tau\ell}\varphi}q|\nabla
f_{t+\tau}|_{*,q}^{q-1}\,\d\mm\leq \int_X|\nabla
Q_{\frac\tau\ell}\varphi||\nabla f_{t+\tau}|_{*,q}^{q-1} \,\d\mm\\
&\leq \frac1{q\ell}\int_X|\nabla Q_{\frac\tau\ell}\varphi
|^qf_{t+\tau}d\mm+\frac{\ell^{p-1}} p\int_X\frac{|\nabla
f_{t+\tau}|_{*,q}^q}{f_{t+\tau}^{p-1}}\,\d\mm.
\end{split}
\end{equation}
Plugging this inequality in \eqref{eq:step1}, we obtain
\[
\int_X Q_1\varphi \,\d\mu_s-\int_X\varphi \,\d\mu_t\leq
\frac{\ell^{p-1}} p\int_0^\ell\int_X\frac{|\nabla
f_{t+\tau}|_{*,q}^q}{f_{t+\tau}^{p-1}}\,\d\mm.
\]
This latter bound does not depend on $\varphi$, so from
\eqref{eq:dualityQ} we deduce
\[
W_p^p(\mu_t,\mu_s)\leq \ell^{p-1}\int_0^\ell\int_X\frac{|\nabla
f_{t+\tau}|_{*,q}^q}{f^{p-1}_{t+\tau}}\,\d\mm.
\]
At Lebesgue points of $r\mapsto\int_X|\nabla
f_r|_{*,q}^q/f_r^{p-1}\,\d\mm$ where the metric speed exists we
obtain the stated pointwise bound on the metric speed.
\end{proof}

The following result provides equivalence between weak and relaxed
gradients. Recall that the set $\mathcal C$ was defined in
\eqref{eq:mathcalC}.

\begin{theorem}\label{thm:graduguali}
Let $f:X\to\R$ Borel. Assume that $f$ is Sobolev along $q$-a.e.
curve and that $\weakgradq fq\in L^q(X,\mm)$. Then $f\in \mathcal C$
and $\relgradq fq=\weakgradq fq$ $\mm$-a.e. in $X$.
\end{theorem}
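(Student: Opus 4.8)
The inequality $\weakgradq fq\le\relgradq fq$ $\mm$-a.e. is already contained in \eqref{allinequalities} (once $f$ has a $q$-relaxed slope), so the whole point is to prove the reverse inequality $\relgradq fq\le\weakgradq fq$ $\mm$-a.e., together with $f\in\mathcal C$. The plan is to run the $L^2(X,\mm)$-gradient flow $(f_t)_{t\ge0}$ of $\C_q$ started at $f$ and to compare two expressions for the dissipation of a carefully chosen entropy $\int_X\Phi(f_t)\,\d\mm$ along it: the exact rate produced by the energy dissipation identity of Proposition~\ref{prop:basecal}, written through $\relgradq{f_t}q$, against the upper bound of Proposition~\ref{prop:boundweak}, written through $\weakgradq fq$. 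These two are glued together by Kuwada's Lemma~\ref{le:kuwada}, which controls the $W_p$-speed of $\mu_t:=f_t\mm$ by the same quantity $\int_X|\nabla f_t|_{*,q}^q/f_t^{p-1}\,\d\mm$; letting $t\downarrow 0$ and invoking lower semicontinuity then forces equality.

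\textbf{Reduction.} First one reduces to $0<c\le f\le C<\infty$ $\mm$-a.e. with $\int_X f\,\d\mm=1$. Indeed $\mathcal C$ and $\relgradq fq$ are defined through the truncations $f_N$ (see \eqref{eq:mathcalC}--\eqref{eq:extendedrelaxed}), truncation preserves Sobolev regularity along $q$-a.e. curve with $\weakgradq{f_N}q\le\weakgradq fq\in L^q(X,\mm)$, and both minimal gradients are local, unaffected by adding constants and linear under scaling (Proposition~\ref{prop:chain}, and the analogous facts for $\weakgradq\cdot q$, the latter following from \eqref{eq:pointwisewug}); so it suffices to treat such a normalized $f$ and then undo these operations. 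In this regime $f\in L^2(X,\mm)$ since $\mm$ is finite, so the gradient flow $(f_t)_{t\ge0}$ of $\C_q$ with $f_0=f$ exists, and by Proposition~\ref{prop:basecal} it keeps $c\le f_t\le C$ and $\int_X f_t\,\d\mm=1$, whence $\mu_t:=f_t\mm\in\prob X$.

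\textbf{The key estimate.} Fix $\Phi\in C^2([c,C])$ with $\Phi''(z)=z^{1-p}$; it is convex, and $\int_X\Phi''(f_s)|\nabla f_s|_{*,q}^q\,\d\mm=\int_X\frac{|\nabla f_s|_{*,q}^q}{f_s^{p-1}}\,\d\mm=:F(f_s)$. By Lemma~\ref{le:kuwada}, $|\dot\mu_t|^p\le F(f_t)$ for a.e.\ $t$; combining this with the energy dissipation identity of Proposition~\ref{prop:basecal} and with the continuity (from $f_t\to f_0$ in $L^2(X,\mm)$ and $\Phi$ Lipschitz on $[c,C]$) and monotonicity of $t\mapsto\int_X\Phi(f_t)\,\d\mm$ one gets $\int_0^1|\dot\mu_t|^p\,\d t\le\int_0^1F(f_t)\,\d t=\int_X\Phi(f_0)\,\d\mm-\int_X\Phi(f_1)\,\d\mm<\infty$ (finiteness using that $\mm$ is finite and $\Phi$ is bounded on $[c,C]$). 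Hence $\mu\in AC^p([0,1];(\prob X,W_p))$, and Proposition~\ref{prop:boundweak} applies with this $\Phi$: since $\Phi''(f_0)=f_0^{1-p}$ and $(p-1)q=p$, its right-hand side is $\frac1q\iint_0^t f_0^{-p}|\nabla f_0|_{w,q}^q f_s\,\d s\,\d\mm+\frac1p\int_0^t|\dot\mu_s|^p\,\d s$, while its left-hand side equals $\int_0^tF(f_s)\,\d s$ by the dissipation identity; bounding $\int_0^t|\dot\mu_s|^p\,\d s\le\int_0^tF(f_s)\,\d s$ by Kuwada's lemma and using $\frac1p+\frac1q=1$ to absorb the $W_p$-term, we are left with
\[
\int_0^tF(f_s)\,\d s\ \le\ \iint_0^t f_0^{-p}\,|\nabla f_0|_{w,q}^q\,f_s\,\d s\,\d\mm\qquad\text{for every }t>0.
\]

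\textbf{Passage to the limit and conclusion.} Dividing by $t$ and letting $t\downarrow0$: on the right, $s\mapsto\int_X f_0^{-p}|\nabla f_0|_{w,q}^q f_s\,\d\mm$ is continuous at $s=0$ because $f_s\to f_0$ in $L^2(X,\mm)$, $0<c\le f_s\le C$ and $f_0^{-p}|\nabla f_0|_{w,q}^q\in L^1(X,\mm)$, so the time-average $\frac1t\int_0^t(\cdot)\,\d s$ converges to $\int_X f_0^{1-p}|\nabla f_0|_{w,q}^q\,\d\mm=\int_X\frac{|\nabla f_0|_{w,q}^q}{f_0^{p-1}}\,\d\mm$; on the left, since $F$ is $L^2(X,\mm)$-lower semicontinuous with the convention that it equals $+\infty$ off $\mathcal C$ (Remark~\ref{rem:basiclsc}, with $\alpha=p-1$) and $f_s\to f_0$ in $L^2(X,\mm)$, one has $F(f_0)\le\liminf_{s\downarrow0}F(f_s)\le\liminf_{t\downarrow0}\frac1t\int_0^tF(f_s)\,\d s$. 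In particular the finiteness of the right-hand side forces $f_0\in\mathcal C$, and
\[
\int_X\frac{|\nabla f_0|_{*,q}^q}{f_0^{p-1}}\,\d\mm\ \le\ \int_X\frac{|\nabla f_0|_{w,q}^q}{f_0^{p-1}}\,\d\mm.
\]
As $f_0$ now has a $q$-relaxed slope and is Sobolev along $q$-a.e.\ curve, \eqref{allinequalities} gives $\weakgradq{f_0}q\le\relgradq{f_0}q$ $\mm$-a.e., so the integrand on the left dominates that on the right pointwise; the displayed inequality then forces $\relgradq{f_0}q=\weakgradq{f_0}q$ $\mm$-a.e. Undoing the reductions yields $f\in\mathcal C$ and $\relgradq fq=\weakgradq fq$ $\mm$-a.e. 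I expect the two delicate points to be the verification that $\mu\in AC^p$, needed to invoke Proposition~\ref{prop:boundweak}, and the asymmetric limit $t\downarrow0$ — lower semicontinuity on the relaxed side against continuity/averaging on the weak side — which leaves no slack whatsoever and is precisely what dictates the sharp choice $\Phi''(z)=z^{1-p}$.
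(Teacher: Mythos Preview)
Your proof is correct and follows essentially the same route as the paper's own argument: reduce by truncation to $0<c\le f\le C$, run the $L^2$-gradient flow of $\C_q$, compare the exact entropy dissipation of Proposition~\ref{prop:basecal} (with $\Phi''(z)=z^{1-p}$) against the upper bound from Proposition~\ref{prop:boundweak} via Kuwada's Lemma~\ref{le:kuwada}, divide by $t$, and conclude by the lower semicontinuity of Remark~\ref{rem:basiclsc}. Your write-up is in fact slightly more explicit than the paper's in a couple of places (the normalization $\int f\,\d\mm=1$ needed to invoke Lemma~\ref{le:kuwada}, the verification that $\mu\in AC^p$, and the $\liminf$ chain $F(f_0)\le\liminf_s F(f_s)\le\liminf_t\frac1t\int_0^tF(f_s)\,\d s$), but the strategy and all the ingredients coincide.
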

\begin{proof}
Up to a truncation argument and addition of a constant, we can
assume that $0<c\leq f\leq C<\infty$ $\mm$-a.e. for some $0<c\leq
C<\infty$. Let $(g_t)$ be the $L^2$-gradient flow of $\C_q$ starting
from $g_0:=f$ and let us choose $\Phi\in C^2([c,C])$ in such a way
that $\Phi''(z)=z^{1-p}$ in $[c,C]$. Recall that $c\leq g_t\leq C$
$\mm$-a.e. in $X$ and that from Proposition~\ref{prop:basecal} we
have
\begin{equation}\label{eq:Amerio}
\int\Phi(g_0)\,\d\mm-\int\Phi(g_t)\,\d\mm=\int_0^t\int_X\Phi''(g_s)|\nabla
g_s|_{*,q}^q\d\mm\,\d s\qquad\forall t\in [0,\infty).
\end{equation}
In particular this gives that $\int_0^\infty\int_X\Phi''(g_s)|\nabla
g_s|_{*,q}^q\,\d\mm\,\d s$ is finite. Setting $\mu_t=g_t\mm$,
Lemma~\ref{le:kuwada} and the lower bound on $g_t$ give that
$\mu_t\in AC^p\bigl((0,\infty),(\prob X,W_p)\bigr)$, so that
Proposition~\ref{prop:boundweak} and Lemma~\ref{le:kuwada} yield
\[
\int \Phi(g_0)\,\d\mm-\int
\Phi(g_t)\,\d\mm\leq\frac1q\int_0^t\int_X\bigl(\Phi''(g_0)|\nabla
g_0|_{w,q}\bigr)^q g_s\,\d\mm\,\d s+\frac1p\int_0^t\int_X\frac{|\nabla
g_s|_{*,q}^q}{g_s^{p-1}}\,\d\mm\,\d s.
\]
Hence, comparing this last expression with \eqref{eq:Amerio}, our
choice of $\Phi$ gives
\[
\frac1q\iint_0^t\,\frac{|\nabla g_s|_{*,q}^q}{g_s^{p-1}}\d
s\,\d\mm\leq\int_0^t\int_X\frac1q \bigl(\frac{|\nabla
g_0|_{w,q}}{g_0^{p-1}}\bigr)^q g_s\,\d\mm\,\d s.
\]
Now, the bound $f\geq c>0$ ensures $\Phi''(g_0)|\nabla g_0|_{*,q}\in
 L^q(X,\mm)$. In addition, the
maximum principle together with the convergence of $g_s$ to $g_0$ in
$L^2(X,\mm)$ as $s\downarrow 0$ grants that the convergence is also
weak$^*$ in $L^\infty(X,\mm)$, therefore
\[
\limsup_{t\downarrow 0}\frac{1}{t}\iint_0^t\,\frac{|\nabla
g_s|_{*,q}^q}{g_s^{p-1}}\d s\,\d\mm\leq\int_X \frac{|\nabla
g_0|_{w,q}^q}{g_0^{q(p-1)}}g_0\d\mm=\int_X \frac{|\nabla
g_0|_{w,q}^q}{g_0^{p-1}}\,\d\mm.
\]
The lower semicontinuity property stated in
Remark~\ref{rem:basiclsc} with $\alpha=p-1$ then gives
\[
\int_X \frac{|\nabla g_0|_{*,q}^q}{g_0^{p-1}}\,\d\mm\leq \int_X
\frac{|\nabla g_0|_{w,q}^q}{g_0^{p-1}}\,\d\mm.
\]
This, together with the inequality $\weakgradq {g_0}q\leq\relgradq
{g_0}q$ $\mm$-a.e. in $X$, gives the conclusion.
\end{proof}

In particular, taking into account \eqref{allinequalities}, we
obtain the following equivalence result. We state it for
$L^q(X,\mm)$ functions because in the definition of $q$-relaxed
upper gradient and $q$-relaxed slope this integrability assumption
is made (see also Remark~\ref{rem:whyq}), while no integrability is
made in the other two definitions. It is also clear that if we
extend the ``relaxed'' definitions of gradient by truncation, as in
\eqref{eq:extendedrelaxed}, then equivalence goes beyond
$L^q(X,\mm)$ functions.

\begin{theorem}[Equivalence of weak gradients] 
 \label{thm:gradugualibis} Let $f\in L^q(X,\mm)$. Then the
following four properties are equivalent:
\begin{itemize}
\item[(i)] $f$ has a $q$-relaxed upper gradient;
\item[(ii)] $f$ has a $q$-relaxed slope;
\item[(iii)] $f$ has a $q$-upper gradient in $L^q(X,\mm)$;
\item[(iv)] $f$ has a $q$-weak upper gradient in $L^q(X,\mm)$.
\end{itemize}
In addition, the minimal $q$-relaxed upper gradient, the minimal
$q$-relaxed slope, the minimal $q$-upper gradient and the minimal
$q$-weak upper gradient coincide $\mm$-a.e. in $X$. 
\end{theorem}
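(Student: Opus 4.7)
The plan is to obtain the theorem by assembling the chain of inequalities \eqref{allinequalities} with the equality $|\nabla f|_{*,q}=|\nabla f|_{w,q}$ already established in Theorem~\ref{thm:graduguali}. The implications (ii)$\Rightarrow$(i)$\Rightarrow$(iii)$\Rightarrow$(iv), together with the corresponding a.e.\ inequalities between the four minimal gradients, are direct consequences of \eqref{allinequalities}: the minimal relaxed slope is itself a relaxed upper gradient (since slopes of Lipschitz functions are upper gradients); any relaxed upper gradient is a $q$-upper gradient by the Fuglede-type stability recalled after Remark~\ref{rem:Fuglede} (see also \cite[Lemma~4.11]{Shanmugalingam00}); and any $q$-upper gradient is a $q$-weak upper gradient by Remark~\ref{rem:comparenullsets}.

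The substantive step is (iv)$\Rightarrow$(ii). Given $f\in L^q(X,\mm)$ with a $q$-weak upper gradient $|\nabla f|_{w,q}\in L^q(X,\mm)$, Remark~\ref{re:restr} (which relies on Lemma~\ref{lem:Fibonacci}) forces $f$ to be Sobolev along $q$-a.e.\ curve, so the hypotheses of Theorem~\ref{thm:graduguali} are met. That theorem then gives $f\in\mathcal C$ together with $|\nabla f_N|_{*,q}=|\nabla f_N|_{w,q}\le |\nabla f|_{w,q}$ $\mm$-a.e.\ for every $N$, where $f_N:=\max\{-N,\min\{f,N\}\}$ and the last inequality uses the locality of weak upper gradients (Proposition~\ref{prop:chain}). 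To promote the truncation-level membership $f\in\mathcal C$ to a genuine $q$-relaxed slope of $f$ in the sense of Definition~\ref{def:genuppergrad}, I would apply Proposition~\ref{prop:easy} to each $f_N$ to produce Lipschitz functions $g_n^N$ with $g_n^N\to f_N$ in $L^q(X,\mm)$ and $|\nabla g_n^N|\to |\nabla f_N|_{*,q}$ in $L^q(X,\mm)$. Since $\|f-f_N\|_q\to 0$ and $\||\nabla f_N|_{*,q}\|_q\leq\||\nabla f|_{w,q}\|_q$, a diagonal extraction then yields a single sequence of Lipschitz functions $h_n\to f$ in $L^q(X,\mm)$ with $\sup_n\||\nabla h_n|\|_q<\infty$. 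Any weak $L^q$-limit of $|\nabla h_n|$ is a $q$-relaxed slope of $f$, bounded $\mm$-a.e.\ by $|\nabla f|_{w,q}$; this simultaneously proves (ii) and gives $|\nabla f|_{*,q}\leq|\nabla f|_{w,q}$ $\mm$-a.e.

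Combined with the chain \eqref{allinequalities}, this last inequality squeezes all four pointwise inequalities into equalities, proving that the four minimal gradients coincide $\mm$-a.e. The main obstacle, although technical rather than conceptual, is the diagonalisation step: Theorem~\ref{thm:graduguali} delivers the identification only at the level of truncations, and one must take some care in reassembling the truncated Lipschitz approximations into a single sequence converging to $f$ whose slopes remain uniformly controlled in $L^q(X,\mm)$. Every other step is routine bookkeeping on top of the substantial work already carried out in Theorem~\ref{thm:graduguali}.
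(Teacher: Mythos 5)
Your proposal is correct and follows essentially the same route as the paper: the chain \eqref{allinequalities} handles (ii)$\Rightarrow$(i)$\Rightarrow$(iii)$\Rightarrow$(iv), and Theorem~\ref{thm:graduguali} closes the loop by giving $|\nabla f|_{*,q}\leq|\nabla f|_{w,q}$ $\mm$-a.e. The extra diagonalisation you perform to pass from $f\in\mathcal C$ (the truncation-level conclusion of Theorem~\ref{thm:graduguali}) to a genuine $q$-relaxed slope of $f$ in the sense of Definition~\ref{def:genuppergrad} is a legitimate refinement that the paper leaves implicit (it is mentioned only in \S\ref{sec:improveslope}); note merely that the inequality $|\nabla f_N|_{w,q}\leq|\nabla f|_{w,q}$ should be justified via the chain rule for weak upper gradients coming from \eqref{eq:pointwisewug} rather than Proposition~\ref{prop:chain}, which concerns relaxed slopes.
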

\begin{proof} If either of the four properties holds for some gradient $g$,
then \eqref{allinequalities} gives that $f$ is Sobolev along
$q$-a.e. curve and $|\nabla f|_{w,q}\leq g$ $\mm$-a.e. in $X$. Then,
Theorem~\ref{thm:graduguali} yields $|\nabla f|_{*,q}\leq g$
$\mm$-a.e. in $X$ and we can invoke \eqref{allinequalities} again to
obtain that all four properties hold and the corresponding weak
gradients are equal. 
\end{proof}

\section{Further comments and extensions}\label{sextensions}

In this section we point out how our main results, namely
Theorem~\ref{thm:graduguali} and Theorem~\ref{thm:gradugualibis} can
be extended to more general metric measure spaces. Recall that, in
the previous section, we derived them under the assumptions that
$(X,\sfd)$ is a compact metric space and that $\mm$ is a finite
measure.

\subsection{The role of the compactness assumption in Section~\ref{sec:hopflax}}

The compactness assumption is not really needed, and suffices to
assume that $(X,\sfd)$ is a complete metric space. The only
difference appears at the level of the definition of $D^\pm(x,t)$,
since in this case existence of minimizers is not ensured and one
has to work with minimizing sequences. This results in longer
proofs, but the arguments remain essentially the same, see
\cite{Ambrosio-Gigli-Savare11} for a detailed proof in the case
$p=q=2$. Thanks to this remark, the proof of the equivalence results
immediately extends to complete and separable metric measure spaces
with $(X,\sfd,\mm)$ with $\sfd$ bounded and $\mm$ finite.

Also, it is worthwhile to remark that all results (except of course
the Lipschitz bounds on $Q_tf$ and the continuity of $t\mapsto Q_tf$
from $[0,\infty)$ to $C(X)$) of Section~\ref{sec:hopflax} remain
valid for lower semicontinuous functions $f:X\to\R\cup\{+\infty\}$
satisfying
$$
f(x)\geq -C\bigl(1+\sfd^r(x,\bar x)\bigr)\qquad\forall x\in X
$$
for suitable $\bar x\in X$, $C\geq 0$, $r\in [0,p)$.

\subsection{Locally finite metric measure spaces}

We say that a metric measure space $(X,\sfd,\mm)$ is locally finite
if $(X,\sfd)$ is complete and separable and any $x\in{\rm
supp\,}\mm$ has a neighbourhood $U$ with finite $\mm$-measure.

For any locally finite metric measure space it is not difficult to
find (choosing for instance as $U$ balls with $\mm$-negligible
boundary) a nondecreasing sequence of open sets $A_h$ whose union
covers $\mm$-almost all of $X$ and whose boundaries $\partial A_h$
are $\mm$-negligible. Then, setting $X_h=\overline{A_h}$, we can
apply the equivalence results in all metric measure spaces
$(X_h,\sfd,\mm)$ to obtain the equivalence in $(X,\sfd,\mm)$. This
is due to the fact that the minimal $q$-weak upper gradient
satisfies this local-to-global property (see
\cite[Theorem~4.20]{Ambrosio-Gigli-Savare11bis} for a proof in the
case $p=q=2$):
\begin{equation}\label{eq:locglob1}
|\nabla f|_{X,w,q}=|\nabla f|_{X_h,w,q}\qquad\text{$\mm$-a.e. in
$X_h$.}
\end{equation}
An analogous property holds for the larger gradient, namely the minimal
$q$-relaxed slope (arguing as in \cite[Lemma 4.11]{Ambrosio-Gigli-Savare11}):
\begin{equation}\label{eq:locglob2}
|\nabla f|_{X,*,q}=|\nabla f|_{X_h,*,q}\qquad\text{$\mm$-a.e. in
$X_h$.}
\end{equation}
Combining \eqref{eq:locglob1} and \eqref{eq:locglob2} gives
the identification result for all gradients and all locally finite metric measure spaces.

\subsection{An enforcement of the density result}\label{sec:improveslope}

In Theorem~\ref{thm:graduguali} we proved that
if $f:X\to\R$ is Borel and $f$ is Sobolev along $q$-a.e. curve
and $\weakgradq fq\in L^q(X,\mm)$, then there exist Lipschitz functions
$f_n$ convergent to $f$ $\mm$-a.e. in $X$ and satisfying
\begin{equation}\label{eq:lavuolenicola}
|\nabla f_n|\to \weakgradq fq\qquad\text{in $L^q(X,\mm)$.}
\end{equation}
This follows by a diagonal argument, thanks to the fact that all truncations $f_N$ of $f$ satisfy $\C_q(f_N)\leq\tfrac1q
\int_X\weakgradq fq^q\,\d\mm$. It is worthwhile to notice that \eqref{eq:lavuolenicola}  
can be improved asking the existence of Lipschitz functions $f_n$ such that
${\rm Lip}_a(f_n,\cdot)\to\weakgradq fq$ in $L^q(X,\mm)$, where ${\rm Lip}_a(f,\cdot)$ is the asymptotic Lipschitz constant
defined in \eqref{eq:asymlip}: the key observation is
that, as noticed in \eqref{eq:hjbsusbis}, the Hamilton-Jacobi subsolution property holds with the new, and larger, pseudo gradient
${\rm Lip}_a(g,\cdot)$. Starting from this observation, and using the convexity inequality
$$
{\rm Lip}_a\bigl((1-\chi)f+\chi g\bigr)\leq\bigl(1-\chi(x)\bigr){\rm Lip}_a(f,x)+\chi(x){\rm Lip}_a(g,x)+{\rm Lip}(\chi)|f(x)-g(x)|
$$
for $\chi:X\to [0,1]$ Lipschitz and $f,\,g:X\to\R$ Lipschitz,
one can build Cheeger's energy by minimizing the integrals of ${\rm Lip}_a(f_n,\cdot)$ instead of the integral of $|\nabla g|$, 
still getting a convex and lower semicontinuous functional and a corresponding relaxed gradient.
Then, \eqref{eq:hjbsusbis} provides Kuwada's Lemma~\ref{le:kuwada} for the new Cheeger energy and the proof of Theorem~\ref{thm:graduguali} 
can repeated word by word.

\subsection{Orlicz-Wasserstein spaces}

Another potential extension, that we shall not develop here, is for
general Lagrangians-Hamiltonians: one can consider the functions
$$
Q_tf(x):=\inf_{y\in X} f(y)+tL\bigl(\frac{\sfd(y,x)}{t}\bigr)
$$
and prove that $\tfrac{\d}{\d t}Q_tf+H(\nabla Q_tf)\leq 0$ with
$H=L^*$. This way, also gradients in Orlicz spaces as $LLogL$ could
be considered. On the other hand, the Orlicz-Wasserstein distances
$$
W_L(\mu,\nu):=\inf\left\{\lambda>0:\
\inf_{\sppi\in\Gamma(\mu,\nu)}\int
L\bigl(\frac{\sfd(x,y)}{\lambda}\bigr)\,\d\ppi\leq 1\right\}
$$
have not been considered much so far (except in \cite{Sturm-Orlicz}
and more implicitly in \cite{FigalliGangbo,Villani09}) and the
extension of Lisini's superposition theorem to this class of
distances is not known, although expected to be true. These
extensions might be particularly interesting to deal with the
limiting case $q\downarrow 1$, where the Wasserstein exponent $p$
goes to $\infty$ (for instance $LlogL$ integrability of gradients
corresponds to exponential integrability of metric derivative on
curves) .

\subsection{$W^{1,1}$ and $BV$ spaces}

In this subsection we discuss the limiting case $q=1$, $p=\infty$
and assume for the sake of simplicity that $(X,\sfd)$ is locally
compact and separable. Following the approach in \cite{Miranda03}, for
any open set $A\subset X$ we can define
$$
|Df|(A):=\inf\left\{\liminf_{h\to\infty}\int_A|\nabla f_h|\,\d\mm:\
f_h\in {\rm Lip}_{\rm loc}(A),\,\,f_h\to f\,\,\text{in $L^1_{\rm
loc}(A)$}\right\}.
$$
It is possible to show that, whenever $|Df|(X)<\infty$, the set
function $A\mapsto|Df|(A)$ is the restriction to open sets of $X$ of
a finite Borel measure, that we still denote by $|Df|$. In the case
when $|Df|$ is abolutely continuous with respect to $\mm$,
corresponding to the Sobolev space $W^{1,1}$ we may define $|\nabla
f|_{*,1}$ as the density of $|Df|$ w.r.t. $\mm$.

This approach corresponds to $1$-relaxed slopes. Coming to $1$-weak
upper gradients, it is natural to consider $\infty$-test plans as
probability measures $\ppi$ concentrated on Lipschitz curves and to
define exceptional sets of curves using this class of test plans.
Then the class of functions which are $BV$ along $1$-almost every
curve can be defined. It is not hard to show that if
$|Df|(X)<\infty$ and $\ppi$ is a $\infty$-test plan such that
$(\e_t)_\#\ppi\leq C(\ppi)\mm$ for all $t\in [0,1]$ then the following
inequality between measures in $X$ holds:
$$
\int \gamma_\sharp|D(f\circ\gamma)|\,d\ppi(\gamma)\leq C(\ppi)\|{\rm
Lip}(\gamma)\|_{L^\infty(\sppi)}|Df|,
$$
where $|D(f\circ\gamma)|$ is the total variation measure of the map
$f\circ\gamma:[0,1]\to\R$. This provides one connection between
$1$-weak upper gradients and $1$-relaxed slopes, while in \cite{Ambrosio-DiMarino12}
the arguments of this paper are adapted to show that the supremum of
$$
\frac{1}{C(\ppi)\|{\rm
Lip}(\gamma)\|_{L^\infty(\sppi)}}\int \gamma_\sharp|D(f\circ\gamma)|\,d\ppi(\gamma)
$$
in the lattice of measures coincides with $|Df|$.

\def\cprime{$'$} \def\cprime{$'$}


\begin{thebibliography}{10}

\bibitem{Ambrosio-Gigli-Savare08}
{\sc L.~Ambrosio, N.~Gigli, and G.~Savar{\'e}}, {\em Gradient flows in metric
  spaces and in the space of probability measures}, Lectures in Mathematics ETH
  Z\"urich, Birkh\"auser Verlag, Basel, second~ed., 2008.

\bibitem{Ambrosio-Gigli-Savare11}
\leavevmode\vrule height 2pt depth -1.6pt width 23pt, {\em Calculus and heat
  flows in metric measure spaces with {R}icci curvature bounded from below},
  Submitted paper, arXiv:1106.2090,  (2011).

\bibitem{Ambrosio-Gigli-Savare11bis}
\leavevmode\vrule height 2pt depth -1.6pt width 23pt, {\em Metric measure
  spaces with {R}iemannian {R}icci curvature bounded from below}, Submitted
  paper,  (2011).

\bibitem{Ambrosio-DiMarino12}
{\sc L.~Ambrosio, S.~DiMarino},  {\em Weak and relaxed gradients for $BV$ functions
on metric measure spaces}, to appear.

\bibitem{Brezis73}
{\sc H.~Br{\'e}zis}, {\em Op\'erateurs maximaux monotones et semi-groupes de
  contractions dans les espaces de {H}ilbert}, North-Holland Publishing Co.,
  Amsterdam, 1973.
\newblock North-Holland Mathematics Studies, No. 5. Notas de Matem\'atica (50).

\bibitem{Cheeger00}
{\sc J.~Cheeger}, {\em Differentiability of {L}ipschitz functions on metric
  measure spaces}, Geom. Funct. Anal., 9 (1999), pp.~428--517.

\bibitem{FigalliGangbo}
{\sc A.~Figalli, W.~Gangbo, and T.~Yolcu}, {\em A variational method for a
  class of parabolic {P}{D}{E}s}.
\newblock to appear on Annali Scuola Normale Superiore.

\bibitem{Fuglede}
{\sc B.~Fuglede}, {\em Extremal length and functional completion}, Acta Math.,
  98 (1957), pp.~171--219.

\bibitem{GigliKuwadaOhta10}
{\sc N.~Gigli, K.~Kuwada, and S.~Ohta}, {\em Heat flow on {A}lexandrov spaces},
  Submitted paper,  (2010).

\bibitem{Gozlan}
{\sc N.~Gozlan, C.~Roberto, and P.~Samson}, {\em Hamilton-{J}acobi equations on
  metric spaces and transport entropy inequalities}.
\newblock to appear.

\bibitem{Hajlasz-Koskela}
{\sc P.~Haj{\l}asz and P.~Koskela}, {\em Sobolev met {P}oincar\'e}, Mem. Amer.
  Math. Soc., 145 (2000), pp.~x+101.

\bibitem{Heinonen07}
{\sc J.~Heinonen}, {\em Nonsmooth calculus}, Bull. Amer. Math. Soc., 44 (2007),
  pp.~163--232.

\bibitem{Heinonen-Koskela98}
{\sc J.~Heinonen and P.~Koskela}, {\em Quasiconformal maps in metric spaces
  with controlled geometry}, Acta Math., 181 (1998), pp.~1--61.

\bibitem{Heinonen-Koskela99}
\leavevmode\vrule height 2pt depth -1.6pt width 23pt, {\em A note on
  {L}ipschitz functions, upper gradients, and the {P}oincar\'e inequality}, New
  Zealand J. Math., 28 (1999), pp.~37--42.

\bibitem{Koskela_removable}
{\sc P.~Koskela}, {\em Removable sets for {S}obolev spaces}, Ark. Mat., 37
  (1999), pp.~291--304.

\bibitem{Koskela-MacManus}
{\sc P.~Koskela and P.~MacManus}, {\em Quasiconformal mappings and {S}obolev
  spaces}, Studia Math., 131 (1998), pp.~1--17.

\bibitem{Kuwada10}
{\sc K.~Kuwada}, {\em Duality on gradient estimates and {W}asserstein
  controls}, Journal of Functional Analysis, 258 (2010), pp.~3758--3774.

\bibitem{Lisini07}
{\sc S.~Lisini}, {\em Characterization of absolutely continuous curves in
  {W}asserstein spaces}, Calc. Var. Partial Differential Equations, 28 (2007),
  pp.~85--120.

\bibitem{Miranda03}
{\sc M.~Miranda, Jr.}, {\em Functions of bounded variation on ``good'' metric
  spaces}, J. Math. Pures Appl. (9), 82 (2003), pp.~975--1004.

\bibitem{Shanmugalingam00}
{\sc N.~Shanmugalingam}, {\em Newtonian spaces: an extension of {S}obolev
  spaces to metric measure spaces}, Rev. Mat. Iberoamericana, 16 (2000),
  pp.~243--279.

\bibitem{Sturm-Orlicz}
{\sc K.-T. Sturm}, {\em Generalized {O}rlicz spaces and {W}asserstein distances
  for convex-concave scale functions}, Bulletin des sciences math�matiques, 135
  (2011), pp.~795--802.

\bibitem{Villani09}
{\sc C.~Villani}, {\em Optimal transport. Old and new}, vol.~338 of Grundlehren
  der Mathematischen Wissenschaften, Springer-Verlag, Berlin, 2009.

\end{thebibliography}
\end{document}